\DeclareMathOperator{\Ho}{Ho}
\DeclareMathOperator{\Natdg}{Nat_{dg}}
\DeclareMathOperator{\Hom}{Hom}
\DeclareMathOperator{\Fundg}{Fun_{dg}}
\DeclareMathOperator{\Ob}{Ob}
\DeclareMathOperator{\cone}{C}
\DeclareMathOperator{\RHom}{\mathbb{R}\underline{Hom}}
\DeclareMathOperator{\Spec}{Spec}
\DeclareMathOperator{\coker}{coker}
\newcommand{\cat}{\mathbf}
\newcommand{\opp}[1]{{#1}^{\mathrm{op}}}
\newcommand{\kat}{\mathsf}
\newcommand{\ldual}{L}
\newcommand{\rdual}{R}
\newcommand{\lder}{\mathbb L}
\newcommand{\rder}{\mathbb R}
\newcommand{\lotimes}{\otimes^\lder}
\newcommand{\ldiamond}{\diamond^\lder}
\newcommand{\qis}{\overset{\mathrm{qis}}{\approx}}
\newcommand{\qe}{\overset{\mathrm{qe}}{\approx}}
\newcommand{\tria}[1]{\mathrm{tria}(#1)}
\newcommand{\per}[1]{\mathrm{per}(#1)}
\newcommand{\isorightarrow}{\xrightarrow{\sim}}
\newcommand{\profto}{\rightsquigarrow}
\newcommand{\basering}[1]{\mathbf{#1}}
\newcommand{\enrich}[1]{\underline{#1}}
\newcommand{\Mod}[1]{\mathsf{Mod}(#1)}
\newcommand{\comp}[1]{\mathsf{C}(#1)}
\newcommand{\hocomp}[1]{\mathsf{K}(#1)}
\newcommand{\dercomp}[1]{\mathsf{D}(#1)}
\newcommand{\compdg}[1]{\mathsf{C}_\mathrm{dg}(#1)}
\newcommand{\acy}[1]{\mathrm{Ac}(#1)}
\newcommand{\hproj}[1]{\mathrm{h\textrm{-}proj}(#1)}
\newcommand{\hinj}[1]{\mathrm{h\textrm{-}inj}(#1)}
\newcommand{\rep}[1]{\mathrm{rep}(#1)}
\newcommand{\horep}[1]{\mathrm{hrep}(#1)}
\newcommand{\qrep}[1]{\mathrm{qrep}(#1)}
\newcommand{\rrep}[1]{\mathrm{rep}^r(#1)}
\newcommand{\lrep}[1]{\mathrm{rep}^l(#1)}
\newcommand{\rhorep}[1]{\mathrm{hrep}^r(#1)}
\newcommand{\lhorep}[1]{\mathrm{hrep}^l(#1)}
\newcommand{\rqrep}[1]{\mathrm{qrep}^r(#1)}
\newcommand{\lqrep}[1]{\mathrm{qrep}^l(#1)}
\newtheorem{thm}{Theorem}[section]
\newtheorem{prop}[thm]{Proposition}
\newtheorem{coroll}[thm]{Corollary}
\newtheorem{lemma}[thm]{Lemma}
\theoremstyle{remark}
\newtheorem{remark}[thm]{Remark}
\newtheorem{example}[thm]{Example}
\theoremstyle{definition}
\newtheorem{defin}[thm]{Definition}
\numberwithin{equation}{section}
\title{Adjunctions of quasi-functors between dg-categories}
\author{Francesco Genovese}
\address{Dipartimento di Matematica ``F. Casorati'', Università di Pavia, \\ Via Ferrata 5, 27100 Pavia (PV), Italy}
\email{francesco.genovese01@ateneopv.it}
\subjclass[2010]{18A40, 18D05, 18E30}
\keywords{Dg-categories, quasi-functors, adjunctions}
\begin{document}
\begin{abstract}
We study right quasi-representable differential graded bimodules as quasi-functors between dg-categories. We prove that a quasi-functor has a left adjoint if and only if it is left quasi-representable.
\end{abstract}

\maketitle
\section{Introduction}
\emph{Differential graded (dg-) categories} are categories enriched over the closed symmetric monoidal category $\comp{\basering k}$ of complexes of $\basering k$-modules ($\basering k$ is a fixed ground commutative ring). They carry a significant homotopical structure, induced from that of cochain complexes, and they are a popular incarnation of higher categories. In particular, \emph{pretriangulated dg-categories} are employed as enhancements for triangulated categories, overcoming their well-known technical issues such as the non-functoriality of cones. 

Being studied as enriched categories, dg-categories admit obvious morphisms between them, namely, \emph{dg-functors}. They are simply defined as functors which preserve the cochain complex structure of hom-sets. However, dg-functors don't retain the relevant homotopical structure of dg-categories, and must be replaced with more complicated morphisms, which are called \emph{quasi-functors}. To be more precise, it has been proved that the category $\kat{dgCat}$ of dg-categories and dg-functors admits a model category structure whose weak equivalences are the quasi-equivalences (see \cite{tabuada-dgcat}); moreover, its homotopy category $\kat{Hqe} = \Ho(\kat{dgCat})$ has a natural structure of closed symmetric monoidal category (see \cite{toen-morita}). The internal hom in $\kat{Hqe}$ between dg-categories $\cat A$ and $\cat B$ is denoted by $\RHom(\cat A, \cat B)$, and it is the ``dg-category of quasi-functors'', defined up to quasi-equivalence.

The dg-category $\RHom(\cat A, \cat B)$ has many ``incarnations''. One of them, which we will employ in this work, involves particular \emph{dg-bimodules}. Given dg-categories $\cat A$ and $\cat B$, an $\cat A$-$\cat B$-dg-bimodule is a dg-functor $\opp{\cat B} \otimes \cat A \to \compdg{\basering k}$, where $\compdg{\basering k}$ is the dg-category of complexes of $\basering k$-modules. It can also be viewed as a dg-functor $\cat A \to \compdg{\cat B}$, where $\compdg{\cat B} = \Fundg(\opp{\cat B}, \compdg{\basering k})$ is the dg-category of right $\cat B$-dg-modules, namely, the dg-category of dg-functors $\opp{\cat B} \to \compdg{\basering k}$. Any dg-functor $F \colon \cat A \to \cat B$ defines a dg-bimodule
\begin{equation*}
h_F = \cat B(-,F(-)) \colon \cat A \to \compdg{\cat B},
\end{equation*}
which has the property of being \emph{right representable}: for all $A \in \cat A$, $\cat B(-,F(A))$ is the right $\cat B$-module represented by $F(A) \in \cat B$. Conversely, if $T$ is a $\cat A$-$\cat B$-bimodule such that $T(A) \cong \cat B(-,F(A))$ as right $\cat B$-modules for some $F(A) \in \cat B$, then $T$ can be identified with a dg-functor $\cat A \to \cat B$. Now, the idea is to weaken the right representability hypothesis: an $\cat A$-$\cat B$-dg-bimodule $T$ is \emph{right quasi-representable} if the right $\cat B$-dg-module $T(A)$ is \emph{quasi-isomorphic} to $\cat B(-,F(A))$ for some $F(A) \in \cat B$, for all $A \in \cat A$ (quasi-isomorphisms of dg-modules are defined as morphisms which are objectwise quasi-isomorphisms of complexes). The category of right quasi-representable bimodules $\cat A \to \cat B$ is denoted by $\rqrep{\cat A, \cat B}$ (see Subsection \ref{subsec:duality_bimod} and the discussion around Proposition \ref{prop:lqrep_rqrep}). It is easily seen that a right quasi-representable $\cat A$-$\cat B$-bimodule does \emph{not} induce a dg-functor $\cat A \to \cat B$; however, it induces a functor between the homotopy categories: $H^0(\cat A) \to H^0(\cat B)$. It turns out that such bimodules are a possible incarnation of quasi-functors (and, with a slight abuse of terminology, will be called themselves quasi-functors); the precise statement goes as follows:
\begin{prop}[{\cite[Theorem 4.5]{keller-dgcat}}] \label{prop:rep_quasifun}
The category $H^0(\RHom(\cat A, \cat B))$ is naturally equivalent to the category $\rqrep{\cat A, \cat B}$ of right quasi-representable $\cat A$-$\cat B$-dg-bimodules.
\end{prop}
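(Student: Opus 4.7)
My plan is to realize $\RHom(\cat A, \cat B)$ explicitly as a full dg-subcategory of $\compdg{\opp{\cat B} \otimes \cat A}$. Specifically, by Toën's construction of the internal hom in $\kat{Hqe}$, $\RHom(\cat A, \cat B)$ is quasi-equivalent to the full dg-subcategory $\cat R \subset \compdg{\opp{\cat B} \otimes \cat A}$ of bimodules which are simultaneously h-projective (cofibrant for the projective model structure on bimodules) and right quasi-representable. The task then reduces to constructing an equivalence $H^0(\cat R) \simeq \rqrep{\cat A, \cat B}$, where the latter category is understood with morphisms in the derived sense (i.e.\ morphisms in $\dercomp{\opp{\cat B} \otimes \cat A}$ restricted to right quasi-representable objects).

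Essential surjectivity is handled by resolutions: given $T \in \rqrep{\cat A, \cat B}$, choose an h-projective resolution $p \colon T' \to T$. Then $T'$ is h-projective, and right quasi-representability is preserved under quasi-isomorphism of bimodules, so $T' \in \cat R$; moreover $p$ becomes invertible in $\rqrep{\cat A, \cat B}$, yielding $T \cong T'$. Fullness and faithfulness follow from the standard identification
\begin{equation*}
H^0(\dgmor(P,Q)) \isorightarrow \Hom_{\dercomp{\opp{\cat B} \otimes \cat A}}(P,Q),
\end{equation*}
valid whenever $P$ is h-projective: applied to $P,Q \in \cat R$, this identifies homotopy classes of closed degree-zero bimodule morphisms (the morphisms of $H^0(\cat R)$) with the derived-category morphisms between the underlying right quasi-representable bimodules.

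The principal obstacle is the first step, namely justifying that $\cat R$ really models $\RHom(\cat A, \cat B)$. This requires the full homotopy-theoretic infrastructure for dg-categories: a careful analysis of the internal hom in $\kat{Hqe}$, and the verification that, for a cofibrant replacement $\tilde{\cat A}$ of $\cat A$, the assignment $F \mapsto h_F$ from dg-functors $\tilde{\cat A} \to \cat B$ to their associated bimodules can be upgraded to a quasi-equivalence whose essential image is precisely the h-projective right quasi-representable bimodules. Once this bridge is in place, the remaining verifications are formal consequences of general properties of the projective model structure on bimodules.
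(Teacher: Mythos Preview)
The paper does not supply a proof of this proposition: it is stated with a citation to \cite[Theorem 4.5]{keller-dgcat} and invoked thereafter as a black box. There is thus no argument in the paper to compare your proposal against.

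That said, your sketch is the standard route to the cited result, and is essentially correct. The model for $\RHom(\cat A, \cat B)$ as the full dg-subcategory $\cat R \subset \compdg{\cat A, \cat B}$ of h-projective right quasi-representable bimodules is exactly To\"en's description of the internal hom (up to replacing $\cat A$ by a cofibrant dg-category, which the paper absorbs into the derived tensor product $\cat B \lotimes \opp{\cat A}$). Once this is granted, your two remaining steps are unproblematic: right quasi-representability is manifestly invariant under quasi-isomorphism of bimodules, so h-projective resolution lands in $\cat R$ and gives essential surjectivity; and the isomorphism $H^0(\compdg{\cat A, \cat B}(P,Q)) \isorightarrow \dercomp{\cat A, \cat B}(P,Q)$ for $P$ h-projective gives full faithfulness. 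You are right that the genuine content lies entirely in the first step, and that this requires the homotopy-theoretic machinery of \cite{toen-morita}; the paper simply imports that result rather than reproving it.
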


As hinted, quasi-functors are the homotopy relevant morphisms between dg-categories. Being particular bimodules, they are the $1$-morphisms of a \emph{bicategory}; in that context, there is a natural notion of \emph{adjunction} (see Section \ref{section:bicat_bimod_adj}). What we have managed to do in this work is to give a simple characterisation of adjoint quasi-functors, in term of quasi-representability. To have a grasp of the idea, start from an ordinary adjunction of dg-functors $F \dashv G \colon \cat A \to \cat B$: it is an isomorphism
\begin{equation*}
\cat B(F(A),B) \cong \cat A(A,G(B)),
\end{equation*}
natural in $A \in \cat A$ and $B \in \cat B$. This naturality implies that this is actually an isomorphism of bimodules: on the right hand side we have the $\cat B$-$\cat A$-bimodule $h_G$, whereas on the left hand side we have the bimodule $h^F$, which is obtained from $h_F \colon \cat A \to \compdg{\cat B}$ by means of a sort of \emph{duality}:
\begin{align*}
h_F(A)(B) = \cat B(B,F(A)), \\
h^F(B)(A) = \cat B(F(A),B)).
\end{align*}
This duality is actually defined for all bimodules: it maps functorially $\cat A$-$\cat B$-bimodules to $\cat B$-$\cat A$-bimodules, and vice-versa (see Proposition \ref{prop:cap3_duality_bimodules}). Bimodules of the form $h_F$ (up to isomorphism of bimodules), namely right representable bimodules, are mapped to bimodules of the form $h^F$ (up to isomorphism), which are called \emph{left representable}. Clearly, saying ``the dg-functor $G$ has a left adjoint'' is equivalent to saying ``$h_G$ is left representable'': $h^F \cong h_G$. Upon overcoming some technical difficulties (addressed by means of the duality construction we have mentioned) it can be proved that this characterisation can be consistently extended to quasi-functors. Defining \emph{left quasi-representable bimodules} in the obvious way, we obtain:
\begin{thm}[Proposition \ref{prop:quasifun_adjoint_repr}] \label{thm:quasifun_adjoint_repr}
Let $G \colon \cat B \to \cat A$ be a quasi-functor. Then, $G$ has a left adjoint quasi-functor if and only if it is left quasi-representable.
\end{thm}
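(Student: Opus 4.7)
The plan is to exploit the duality between $\cat A$-$\cat B$-bimodules and $\cat B$-$\cat A$-bimodules supplied by Proposition \ref{prop:cap3_duality_bimodules}, which interchanges right and left (quasi-)representability, together with the bicategorical reading of adjunctions as equivalences of suitable hom-bimodules. The underlying idea is classical: an adjunction $F \dashv G$ of dg-functors is equivalent to a natural isomorphism $\cat B(F(-),-) \cong \cat A(-,G(-))$, which reads at the bimodule level as $h^F \cong h_G$. The task is to transport this statement to the bicategory of quasi-functors, where strict isomorphisms are replaced by quasi-isomorphisms and the triangle identities hold up to the coherent homotopies carried by $\kat{Hqe}$.

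For the forward direction, assume $F \dashv G$ is an adjunction of quasi-functors, with unit $\eta \colon \cat A \to G \ldiamond F$ and counit $\epsilon \colon F \ldiamond G \to \cat B$, where $\ldiamond$ denotes the derived composition of bimodules. Denote by $F^\vee$ the $\cat B$-$\cat A$-bimodule dual to $F$ produced by Proposition \ref{prop:cap3_duality_bimodules}. Using $\eta$ and $\epsilon$ I would construct a natural quasi-isomorphism of $\cat B$-$\cat A$-bimodules $G \qis F^\vee$, in direct analogy with the classical fact that $h_G \cong h^F$ when $F$ and $G$ are strictly adjoint dg-functors. Since $F$ is right quasi-representable, the duality forces $F^\vee$ to be left quasi-representable, and the quasi-isomorphism $G \qis F^\vee$ transfers this property to $G$.

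For the converse, assume $G$ is left quasi-representable, so that for every $A \in \cat A$ there is an object $F(A) \in \cat B$ and a quasi-isomorphism $G(-)(A) \qis \cat B(F(A), -)$ of right $\cat B$-dg-modules. Applying the (derived) duality to $G$ yields an $\cat A$-$\cat B$-bimodule which, by the interchange of left and right quasi-representability, is right quasi-representable, and which therefore determines a quasi-functor $F \colon \cat A \to \cat B$ with underlying functor $A \mapsto F(A)$ on the level of $H^0$. To exhibit $F \dashv G$ in the bicategory of quasi-functors I would assemble the pointwise quasi-isomorphisms into bimodule morphisms $\eta \colon \cat A \to G \ldiamond F$ and $\epsilon \colon F \ldiamond G \to \cat B$, then verify the triangle identities up to quasi-isomorphism by a Yoneda-type argument, reducing them to the automatic adjunction of the induced functors $H^0(F) \dashv H^0(G)$ on homotopy categories.

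The principal obstacle is homotopy coherence. The duality of Proposition \ref{prop:cap3_duality_bimodules} is a dg-level construction which need not preserve quasi-isomorphisms without first taking cofibrant or h-projective replacements; the composition $\ldiamond$ in the bicategory is likewise a derived tensor product requiring such replacements. The substantial technical content of the proof will consist in checking that the dualised bimodule $F$, together with the unit and counit built from the quasi-representation data, are well-defined independently of the choice of resolutions, and that they satisfy the triangle identities as genuine morphisms in $\kat{Hqe}$ rather than merely up to non-coherent homotopy.
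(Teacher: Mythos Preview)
Your overall strategy---use the derived duality $\lder \ldual \dashv \rder \rdual$ to exchange right and left quasi-representability, and identify the left adjoint with the dual---is the same as the paper's. Where you diverge is in how the adjunction $F \dashv G$ is actually established in $\kat{DBimod}$, and this is where a genuine gap appears.

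In the converse direction you propose to ``assemble the pointwise quasi-isomorphisms into bimodule morphisms $\eta$ and $\epsilon$'' and then ``verify the triangle identities \ldots\ by reducing them to the automatic adjunction of the induced functors $H^0(F) \dashv H^0(G)$''. Neither step works as stated. First, left quasi-representability is only a \emph{property} (for each $A$ the left module $G^A$ is quasi-isomorphic to some $h^{F(A)}$); the individual quasi-isomorphisms carry no naturality in $A$, so there is no evident way to promote them to a single morphism of bimodules. Second, and more seriously, the triangle identities are equalities of morphisms in $\dercomp{\cat A,\cat B}$ and $\dercomp{\cat B,\cat A}$. Knowing that the composites agree after applying the componentwise $H^0$-functor, or that they induce an adjunction on homotopy categories, does not force them to be equal in the derived category of bimodules: two endomorphisms of a bimodule can act identically on cohomology without being homotopic.

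The paper circumvents both problems by never attempting to build $\eta$ and $\epsilon$ from pointwise data. Instead it constructs, for \emph{any} bimodule $T$, explicit closed degree-zero maps $\varepsilon \colon T \diamond \ldual(T) \to h_{\cat B}$, $t \colon h_{\cat A} \to \compdg{\cat B}(T_-,T_-)$, and $n \colon \ldual(T) \diamond T \to \compdg{\cat B}(T_-,T_-)$ via (co)end calculus, and checks the triangle identities diagrammatically \emph{before} imposing any representability hypothesis (Lemmas \ref{lemma:quasiadj_T_L(T)} and \ref{lemma:der_quasiadj_T_L(T)}). The only remaining obstruction is the invertibility of $n$, and Proposition \ref{prop:morph_quasiunit_iso} shows that $n$ becomes a quasi-isomorphism precisely when $T$ is right quasi-representable; one then sets $\eta = n^{-1} t$. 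This yields Corollary \ref{coroll:quasirep_adjoints}: a right quasi-representable $T$ automatically satisfies $T \dashv \lder\ldual(T)$, and dually a left quasi-representable $S$ satisfies $\rder\rdual(S) \dashv S$. With this in hand the theorem is immediate: for the converse take $F = \rder\rdual(G)$; for the forward direction, if $F \dashv G$ with $F$ a quasi-functor then also $F \dashv \lder\ldual(F)$, and uniqueness of adjoints gives $G \qis \lder\ldual(F)$, which is left quasi-representable by Proposition \ref{prop:lqrep_rqrep}. The ``Yoneda-type reduction to $H^0$'' you propose is replaced by an honest diagram chase at the dg level; the homotopy-coherence problem you correctly flag is solved not by descent from $H^0$ but by building everything with coends from the start.
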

As the reader may expect, there is a similar characterisation of right adjoints. The result can be applied to prove an existence theorem of adjoint quasi-functors, under some hypotheses on the dg-categories:
\begin{thm}[Theorem \ref{thm:quasifun_adjoint_existence}]
Let $\cat A, \cat B$ be dg-categories. Assume that $\cat A$ is triangulated and smooth, and that $\cat B$ is locally perfect. Let $T \colon \cat A \to \cat B$ be a quasi-functor. Then, $T$ admits both a left and a right adjoint.
\end{thm}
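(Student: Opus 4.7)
The plan is to reduce the claim to Theorem~\ref{thm:quasifun_adjoint_repr} and its right-adjoint analogue (announced in the introduction as the ``similar characterisation of right adjoints''). By those results, $T$ admits a left adjoint precisely when it is left quasi-representable, and admits a right adjoint precisely when the dual $\cat B$-$\cat A$-bimodule $h^T$ supplied by Proposition~\ref{prop:cap3_duality_bimodules} satisfies the symmetric representability condition. Since $T$ is already right quasi-representable by virtue of being a quasi-functor, what remains is to promote both $T$ and $h^T$ to the appropriate ``left'' representability.

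The technical core is the following perfectness claim: under the assumption that $\cat A$ is smooth and $\cat B$ is locally perfect, $T$ is a perfect (i.e.\ compact) object of $\dercomp{\opp{\cat B} \otimes \cat A}$. This is a standard Toën-style consequence of the hypotheses. Smoothness of $\cat A$ gives that the diagonal bimodule $\cat A \in \dercomp{\opp{\cat A} \otimes \cat A}$ is perfect, and the identification $T \qis \cat A \lotimes_{\cat A} T$ allows one to transfer perfectness across the derived tensor product. Local perfectness of $\cat B$ ensures that the representables $\cat B(-, F(A))$, through which the right quasi-representable $T$ factors up to quasi-isomorphism, have $\basering k$-perfect hom complexes, so that perfectness survives the passage through $T$.

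Granting perfectness of $T$, the rest is formal. For each $B \in \cat B$ the fibre $T(B,-)$, viewed as a left $\cat A$-module, is recovered up to quasi-isomorphism as the derived tensor product $\cat B(-,B) \lotimes_{\cat B} T$ of a perfect right $\cat B$-module with a perfect bimodule; hence it is a perfect left $\cat A$-module. The triangulated hypothesis on $\cat A$, in the sense that the Yoneda functor $\cat A \to \perdg{\cat A}$ is a quasi-equivalence, says that every perfect $\cat A$-module is quasi-isomorphic to a representable one, giving $T(B,-) \qis \cat A(-, S(B))$ for some $S(B) \in \cat A$. This is left quasi-representability, and Theorem~\ref{thm:quasifun_adjoint_repr} delivers the left adjoint. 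The right adjoint is produced by running the same argument for $h^T$, which inherits perfectness from $T$ by functoriality of the duality of Proposition~\ref{prop:cap3_duality_bimodules}. The only genuinely non-formal step is the perfectness of the bimodule $T$; once this input is in hand, the rest of the proof is a combination of bookkeeping about bimodule conventions and direct appeals to Theorem~\ref{thm:quasifun_adjoint_repr} and the triangulated hypothesis.
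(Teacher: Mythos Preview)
Your overall strategy matches the paper's: show that $T$ is a perfect bimodule, deduce that each left $\cat A$-module $T^B$ is perfect, use that $\cat A$ is triangulated to get left quasi-representability, and invoke Proposition~\ref{prop:quasifun_adjoint_repr}. But there are two genuine problems in the execution.

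First, you have the roles of the hypotheses scrambled. You invoke local perfectness of $\cat B$ to argue that $T_A \qis h_{F(A)}$ is perfect, but representable right $\cat B$-modules are always perfect; nothing about $\cat B$ is needed there. Conversely, in the step ``$T$ perfect as a bimodule $\Rightarrow$ $T^B$ perfect as a left $\cat A$-module'' you drop the hypothesis, yet this is exactly where local perfectness of $\cat B$ does the work: on a representable bimodule the evaluation at $B$ gives $\cat B(B,B_0) \otimes h^{A_0}$, which is perfect over $\cat A$ only when $\cat B(B,B_0) \in \per{\basering k}$. (Incidentally, your identification $T^B \qis \cat B(-,B) \lotimes_{\cat B} T$ has the wrong variance; it should be $\cat B(B,-)$.) The paper packages both implications as Lemma~\ref{lemma:smooth_locallyperf}, which keeps the bookkeeping straight.

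Second, and more seriously, your right-adjoint argument does not go through. You claim the dual bimodule ``inherits perfectness from $T$ by functoriality of the duality of Proposition~\ref{prop:cap3_duality_bimodules}'', but $\lder\ldual$ is not known to preserve perfect bimodules under the stated hypotheses: on the representable $h_{(B_0,A_0)}$ it yields $(A,B) \mapsto \compdg{\basering k}(\cat A(A_0,A), \cat B(B_0,B))$, which need not be perfect unless $\cat A$ is locally perfect, and smoothness plus triangulatedness does not force this (think of $\cat A = \perdg{\basering k[x]}$). The paper instead reruns the perfectness argument for $\lder\ldual(T)$ from scratch: since $T$ is right quasi-representable, $\lder\ldual(T)$ is \emph{left} quasi-representable by Proposition~\ref{prop:lqrep_rqrep}, so each $\lder\ldual(T)^A$ is perfect; smoothness of $\cat A$ then makes $\lder\ldual(T)$ a perfect bimodule via Lemma~\ref{lemma:smooth_locallyperf}, and one continues exactly as in the left-adjoint case. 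Your ``same argument for $h^T$'' has to be this rerun, not a bare appeal to functoriality.
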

A key theoretical tool we use to establish Theorem \ref{thm:quasifun_adjoint_existence} is \emph{end and coend calculus} (developed in Section \ref{chapter:dgends}). A (co)end of a dg-(endo)module $F \colon \opp{\cat A} \otimes \cat A \to \compdg{\basering k}$ is essentially a complex which (co)equalises the left and right actions of $\cat A$ on $F$. Some relevant categorical constructions can be expressed with ends and coends, for example the complex of dg-natural transformations between two dg-functors $F, G \colon \cat A \to \cat B$ is an end (see Corollary \ref{coroll:nattrans_end}):
\begin{equation*}
\Natdg(F,G) \cong \int_A \cat B(F(A),G(A)).
\end{equation*}
With (co)end calculus, many categorical results become a matter of straightforward computations. The author personally hopes that this tool becomes more widespread among mathematicians who employ categorical methods in their work.
\subsection*{Acknowledgements}
This article has been estracted from the author's PhD thesis, written under the supervision of Prof. Alberto Canonaco. The author also thanks Fosco Loregian for teaching him the formalism of (co)ends in category theory.
\section{Preliminaries on dg-categories}
This section contains the basic well-known facts about dg-categories; we refer to \cite{keller-dgcat} for a comprehensive survey. We fix, once and for all, a ground commutative ring $\basering k$. Virtually every category we shall encounter will be at least $\basering k$-linear, so we allow ourself some sloppiness, and often employ the terms ``category'' and ``functor'' meaning ``$\basering k$-category'' and ``$\basering k$-functor''. 
\subsection{Dg-categories and dg-functors}
A dg-category is a category enriched over the closed symmetric monoidal category $\comp{\basering k}$ of cochain complexes of $\basering k$-modules:
\begin{defin} \label{def:cap1_dgcat}
A \emph{differential graded (dg-) category} $\cat A$ consists of a set of objects $\Ob \cat A$, a hom-complex $\cat A(A,B)$ for any couple of objects $A,B$, and (unital, associative) composition chain maps of complexes of $\basering k$-modules:
\begin{equation*}
\begin{split}
\cat A(B,C) \otimes \cat A(A,B) & \to \cat A(A,C), \\
g \otimes f & \mapsto gf = g \circ f
\end{split}
\end{equation*}
\end{defin}
\begin{defin} \label{def:cap1_dgfun}
Let $\cat A$ and $\cat B$ be dg-categories. A \emph{dg-functor} $F$ consists of the following data:
\begin{itemize}
\item a function $F \colon \Ob \cat A \to \Ob \cat B$;
\item for any couple of objects $(A,B)$ of $\cat A$, a chain map
\begin{equation*}
F = F_{(A,B)} \colon \cat A(A,B) \to \cat B(F(A),F(B)),
\end{equation*}
\end{itemize}
subject to the usual associativity and unitality axioms.
\end{defin}
\begin{example}
An example of dg-category is given by the \emph{dg-category of complexes} $\compdg{\basering k$}: it has the same objects as $\comp{\basering k}$, and complexes of morphisms $\enrich{\Hom}(V,W)$ given by:
\begin{equation} \label{eq:internalhom_complex_def}
\begin{split}
\enrich{\Hom}(V,W)^n &= \prod_{i \in \mathbb Z} \Hom(V^i, W^{i+k}), \\
df &= d_W \circ f - (-1)^{|f|} f \circ d_V.
\end{split}
\end{equation}
There is a natural ($\basering k$-linear) bijection
\begin{equation} \label{eq:internalhom_complex_1}
\Hom(Z \otimes V, W)  \isorightarrow \Hom(Z, \enrich{\Hom}(V,W)).
\end{equation}
\end{example}
\begin{remark}
All usual categorical constructions can be carried out for dg-categories and dg-functors. 
\begin{enumerate}
\item Any ordinary ($\basering k$-linear) category can be viewed as a dg-category, with trivial complexes of morphisms.
\item For any dg-category $\cat A$ there is the \emph{opposite dg-category} $\opp{\cat A}$, such that
\begin{equation*}
\opp{\cat A}(A,B) = \cat A(B,A), 
\end{equation*}
with the same compositions as in $\cat A$ up to a sign determined by the Koszul sign rule:
\begin{equation*}
\opp{f} \opp{g} = (-1)^{|f||g|} \opp{(gf)},
\end{equation*}
denoting by $\opp{f} \in \opp{\cat A}(B,A)$ the corresponding morphism of $f \in \cat A(A,B)$.
\item Given dg-categories $\cat A$ and $\cat B$, there is the \emph{tensor product} $\cat A \otimes \cat B$: its objects are couples $(A,B)$ where $A \in \cat A$ and $B \in \cat B$; its hom-complexes are given by
\begin{equation*}
(\cat A \otimes \cat B) ((A,B),(A',B'))= \cat A(A,A') \otimes \cat B(B,B').
\end{equation*}
Compositions of two morphisms $f \otimes g$ and $f' \otimes g'$ is given by:
\begin{equation*}
(f' \otimes g')(f \otimes g) = (-1)^{|g'||f|} f'f \otimes g'g.
\end{equation*}
The tensor product commutes with taking opposites: $\opp{(\cat A \otimes \cat B)} = \opp{\cat A} \otimes \opp{\cat B}$. Also, it is symmetric, namely, there is an isomorphism of dg-categories: $\cat A \otimes \cat B \cong \cat B \otimes \cat A$.
\item Given dg-categories $\cat A$ and $\cat B$, there is a dg-category $\Fundg(\cat A, \cat B)$ whose objects are dg-functors $\cat A \to \cat B$ and whose complexes of morphisms are the so-called \emph{dg-natural transformations}:
A dg-natural transformation $\varphi \colon F \to G$ of degree $p$ is a collection of degree $p$ morphisms
\begin{equation*}
\varphi_A \colon F(A) \to G(A),
\end{equation*}
for all $A \in \cat A$, such that for any degree $q$ morphism $f \in \cat A(A,A')$ the following diagram is commutative up to the sign $(-1)^{|p||q|}$:
\begin{equation*}
\xymatrix{
F(A) \ar[r]^{\varphi_A} \ar[d]_{F(f)}  & G(A) \ar[d]^{G(f)} \\
F(A') \ar[r]^{\varphi_{A'}} & G(A').
}
\end{equation*}
Differentials and compositions of dg-natural transformations are defined objectwise. The complex of dg-natural transformations $F \to G$ will often be denoted by $\Natdg(F,G)$.

The dg-category $\Fundg(\cat A, \cat B)$ characterises the internal hom in the monoidal category $(\kat{dgCat}, \otimes)$ of (small) dg-categories and dg-functors. Namely, there is a natural isomorphism in $\kat{dgCat}$:
\begin{equation} \label{eq:sec1_iso_fundg_internalhom}
\Fundg(\cat A \otimes \cat B, \cat C) \cong \Fundg(\cat A, \Fundg(\cat B, \cat C)).
\end{equation}
\item Dg-functors $\cat A \otimes \cat B \to \cat C$ are called \emph{dg-bifunctors}, and they are ``dg-functors of two variables $A \in \cat A$ and $B \in \cat B$'', separately dg-functorial in both. The same is true in general for dg-functors $\cat A_1 \otimes \cdots \otimes \cat A_n \to \cat C$: they can be viewed as ``dg-functors of many variables'', with functoriality in each variable. Sometimes, we will employ \emph{Einstein notation} to indicate which variables are covariant and which ones are contravariant. For example, a dg-functor $F \colon \opp{\cat B} \otimes  \cat A_1 \otimes \cat A_2  \to \cat C$ will be written as
\begin{equation*}
F(B, A_1, A_2) = F_{A_1,A_2}^B; \nomenclature{$F^B_{A_1, A_2}$}{Einstein notation for $F(B,A_1,A_2)$, meaning that the functor $F$ is covariant in $A_1, A_2$ and contravariant in $B$}
\end{equation*}
lower variables are covariant, whereas upper variables are contravariant. Moreover, we shall set (for instance)
\begin{equation*}
F^f_{A_1, A_2} = F(f \otimes 1_{A_1} \otimes 1_{A_2}),
\end{equation*}
for any morphism $f$.
\end{enumerate}
\end{remark}

The operations of taking cocycles and cohomology can be extended from complexes of $\basering k$-modules to dg-categories and dg-functors:
\begin{defin}
Let $\cat A$ be a dg-category. The \emph{underlying category} (resp. the \emph{homotopy category}) of $\cat A$ is the category $Z^0(\cat A)$ (resp. $H^0(\cat A)$) which is defined as follows:
\begin{itemize}
\item $\Ob Z^0(\cat A) = \Ob H^0(\cat A) = \Ob \cat A$,
\item $Z^0(\cat A)(A,B) = Z^0(\cat A(A,B))$ (respectively $H^0(\cat A)(A,B) = H^0(\cat A(A,B))$), for all $A, B \in \cat A$,
\end{itemize}
with natural compositions and identities.
\end{defin}
The mappings $\cat A \mapsto Z^0(\cat A)$ and $\cat A \mapsto H^0(\cat A)$ are functorial: given a dg-functor $F \colon \cat A \to \cat B$, there are natural induced functors 
\begin{align*}
Z^0(F) \colon Z^0(\cat A) & \to Z^0(\cat B), \\
H^0(F) \colon H^0(\cat A) & \to H^0(\cat B). 
\end{align*}
Given two objects $A,B$ in a dg-category $\cat A$, we say that they are \emph{dg-isomorphic} (resp. \emph{homotopy equivalent}), and write $A \cong B$ (resp. $A \approx B$) if they are isomorphic in $Z^0(\cat A)$ (resp. $H^0(\cat A)$).

A \emph{quasi-equivalence} is a dg-functor $F \colon \cat A \to \cat B$ such that the maps
\begin{equation*}
F_{(A,B)} \colon \cat A(A,B) \to \cat B(F(A),F(B))
\end{equation*}
are quasi-isomorphisms, and $H^0(F)$ is essentially surjective. Given dg-categories $\cat A$ and $\cat B$, we say that they are \emph{quasi-equivalent}, writing $\cat A \qe \cat B$, \nomenclature{$\cat A \qe \cat B$}{Dg-categories $\cat A, \cat B$ are quasi-equivalent} if there exists a zig-zag of quasi-equivalences:
\begin{equation*}
\cat A \leftarrow \cat A_1 \rightarrow \ldots \leftarrow \cat A_n \rightarrow \cat B.
\end{equation*}
\subsection{Dg-modules and bimodules} 
Dg-functors with values in the dg-category of complexes are called \emph{dg-modules} and are worth being studied in their own right.
\begin{defin} \label{def:cap1_modules}
Let $\cat A$ be a dg-category. A \emph{left $\cat A$-dg-module} is a dg-functor $\cat A \to \compdg{\basering k}$. A \emph{right $\cat A$-dg-module} is a dg-functor $\opp{\cat A} \to \compdg{\basering k}$. 

Let $\cat B$ be another dg-category. A \emph{$\cat A$-$\cat B$-dg-bimodule} is a dg-bifunctor $\opp{\cat B} \otimes \cat A \to \compdg{\basering k}$.
\end{defin}
\begin{remark} \label{remark:modules_functors_actions}
Let $F \colon \cat A \to \compdg{\basering k}$ be a left dg-module. Given a couple of objects $(A,A')$, we may view the functor $F$ on the hom-complex $\cat A(A,A')$ as an element
\begin{align*}
F_{(A,A')} \in  & \Hom(\cat A(A,A'), \enrich{\Hom}(F(A), F(A')) \\
&\cong \Hom(\cat A(A,A') \otimes F(A), F(A')).
\end{align*}
So, giving $F$ as a functor $\cat A \to \compdg{\basering k}$ is the same as giving a complex of $\basering k$-modules $F(A)$ for all objects $A \in \cat A$, and chain maps
\begin{align*}
\cat A(A,A') \otimes F(A) & \to F(A'), \\
f \otimes x \mapsto fx,
\end{align*}
such that
\begin{align*}
g(fx) &= (gf)x, \\
1_A x &= x,
\end{align*}
for any $x \in F(A)$, for any $f \colon A \to A'$ and $g \colon A' \to A''$. So, a left dg-module is given by a family of complexes parametrised by objects of $\cat A$, together with a left $\cat A$-action. By construction, we have
\begin{equation*}
fx = F(f)(x).
\end{equation*}
Similarly, we have the characterisation of a right dg-module $F \colon \opp{\cat A} \to \compdg{\basering k}$ as family of complexes with a right action:
\begin{align*}
F(A) \otimes \cat A(A',A) & \to F(A'), \\
x \otimes f \mapsto xf;
\end{align*}
in terms of $F$, we have
\begin{equation*}
xf = (-1)^{|x||f|} F(f)(x),
\end{equation*}
taking into account the Kozsul sign rule. Finally, giving a dg-bimodule $F \colon \opp{\cat B} \otimes \cat A \to \compdg{\basering k}$ is the same as giving a family of complexes $F(B,A)$ together with a left action of $\cat A$ and a right action of $\cat B$, subject to the compatibility condition:
\begin{equation*}
(gx)f = g(xf),
\end{equation*}
whenever $x \in F(B,A)$ and $f \colon B' \to B, g \colon A \to A'$. We allow ourselves to drop parentheses and write $gxf$ meaning $(gx)f=g(xf)$. Actually, in terms of the original bifunctor $F$, we have
\begin{equation*}
F(f \otimes g)(x) = (-1)^{|f|(|x|+|g|)} gxf.
\end{equation*}
In the following, we will allow ourselves to shift freely from one characterisation of dg-(bi)modules to another, and adopt indiscriminately either the ``functor'' notation or the ``left/right action'' notation, keeping in mind how to interchange them.
\end{remark}
The dg-category of right $\cat A$-modules is the category of functors $\Fundg(\opp{\cat A}, \compdg{\basering k})$, and will be denoted by $\compdg{\cat A}$ \nomenclature{$\compdg{\cat A}$}{The dg-category of right $\cat A$-dg-modules}. Moreover, we set:
\begin{align}
\comp{\cat A} &= Z^0(\compdg{\cat A}), \nomenclature{$\comp{\cat A}$}{The ordinary category of right $\cat A$-dg-modules}\\
\hocomp{\cat A} &= H^0(\compdg{\cat A}). \nomenclature{$\hocomp{\cat A}$}{The homotopy category of right $\cat A$-dg-modules}
\end{align}
A morphism of left $\cat A$-modules $\varphi \colon F \to G$ is simply a dg-natural transformation of functors. Adopting the ``left action'' notation as explained in Remark \ref{remark:modules_functors_actions}, we see that $\varphi$ can be viewed as a family of maps $\varphi_A \colon F(A) \to G(A)$ such that
\begin{equation*}
\varphi_{A'}(fx) = (-1)^{|\varphi||f|} f \varphi_A(x),
\end{equation*}
for any $x \in F(A)$ and $f \in \cat A(A, A')$ (notice the Koszul sign rule). Similarly, a morphism of right $\cat A$-modules $\psi \colon M \to N$ satisfies the following:
\begin{equation*}
\psi_A(xf) = \psi_{A'}(x)f,
\end{equation*}
for any $x \in M(A')$ and $f \in \cat A(A,A')$. Finally, a morphism of $\cat A$-$\cat B$-bimodules $\xi \colon F_1 \to F_2$ is required to satisfy both compatibilities with the left and right actions:
\begin{equation*}
\xi_{(B',A')}(gxf) = (-1)^{|g||\xi|} g \xi_{(B,A)}(x) f,
\end{equation*}
whenever $x \in F_1(B,A), f \in \cat B(B',B), g \in \cat A(A,A')$.
\subsection{Yoneda lemma and Yoneda embedding}
Let $\cat A$ be a dg-category. We associate to $\cat A$ an $\cat A$-$\cat A$-dg-bimodule called the \emph{diagonal bimodule} and denoted by $h_{\cat A} = h$. \nomenclature{$h_{\cat A} = h$}{The diagonal bimodule of a dg-category $\cat A$} It is defined by
\begin{equation}
h_{\cat A}(A,A') = h^A_{A'} = \cat A(A,A'),
\end{equation}
with right and left actions given by composition in $\cat A$. Also, given a dg-functor $F \colon \cat C \to \cat A$, we denote respectively by $h^F$ and $h_F$ the $\cat A$-$\cat C$-dg-bimodule and the $\cat C$-$\cat A$-bimodule defined by:
\begin{equation}
\begin{split}
h^F(C,A) = h^{F(C)}_A = \cat A(F(C),A), \nomenclature{$h^F, h_G, h^F_G$}{Dg-modules induced by dg-functors} \\
h_F(A,C) = h^A_{F(C)} = \cat A(A,F(C)).
\end{split}
\end{equation}
The left and right actions of $\cat A$ and $\cat C$ on $h^F$ are defined by the following compositions in $\cat A$:
\begin{align*}
gf &= g \circ F(f), \\
g'g &= g' \circ g,
\end{align*}
whenever $f \in \cat C(C',C)$, $g \in \cat A(F(C),A)$ and $g' \in \cat A(A,A')$. The actions on $h_F$ are defined analogously. Moreover, if $G \colon \cat B \to \cat A$ is another dg-functor, there is a $\cat B$-$\cat C$-dg-bimodule $h^F_G$ defined by:
\begin{equation}
h^F_G(C,B) = \cat A(F(C),G(B)),
\end{equation} 
with left and right actions defined in a similar way as above.

Taking the components of the diagonal bimodule, we obtain the right dg-modules $h_A = \cat A(-,A)$ and the left dg-modules $h^A = \cat A(A,-)$. A right (resp. left) $\cat A$-dg-module $F$ is said to be \emph{representable} if $F \cong h_A$ (resp. $F \cong h^A$) for some $A \in \cat A$. The well-known Yoneda lemma has a counterpart in the differential graded framework:
\begin{thm}[Dg-Yoneda lemma] \label{thm:yoneda}
Let $F \in \compdg{\cat A}$ be a right $\cat A$-dg-module, and let $A \in \cat A$. Then, there is an isomorphism of complexes:
\begin{equation} \label{eq:yoneda_iso}
\begin{split}
\Natdg(h_A, F) & \isorightarrow F(A), \\
\varphi & \mapsto \varphi_A(1_A),
\end{split}
\end{equation}
natural both in $A$ and $F$.
\end{thm}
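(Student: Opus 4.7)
The plan is to adapt the classical proof of the Yoneda lemma, being careful with the Koszul sign conventions used in the dg-setting.

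First I would construct a candidate inverse $\Psi \colon F(A) \to \Natdg(h_A, F)$. For a homogeneous element $x \in F(A)$ of degree $|x|$, define
$$\Psi(x)_{A'} \colon h_A(A') = \cat A(A', A) \to F(A'), \qquad f \mapsto xf,$$
using the right action of $\cat A$ on $F$ described in Remark \ref{remark:modules_functors_actions}. Each $\Psi(x)_{A'}$ is then a homogeneous map of complexes of degree $|x|$, and the collection $\{\Psi(x)_{A'}\}_{A' \in \cat A}$ satisfies the morphism-of-right-modules condition $\Psi(x)_{A''}(fh) = \Psi(x)_{A'}(f)\,h$ directly from the associativity $(xf)h = x(fh)$ of the action. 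To check that $\Psi$ itself is a chain map, one computes
$$(d\Psi(x))_{A'}(f) = d(xf) - (-1)^{|x|} x(df) = (dx)f = \Psi(dx)_{A'}(f),$$
where the key identity is the Leibniz rule $d(xf) = (dx)f + (-1)^{|x|} x(df)$, valid because the action $F(A) \otimes \cat A(A', A) \to F(A')$ is a chain map (since $F$ is a dg-functor).

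Next comes the mutual-inverse verification. The equality $\mathrm{ev}_A \circ \Psi = \mathrm{id}_{F(A)}$ is immediate from unitality: $\Psi(x)_A(1_A) = x \cdot 1_A = x$. The converse, $\Psi \circ \mathrm{ev}_A = \mathrm{id}$, is the crux: given a homogeneous $\varphi \colon h_A \to F$, set $x := \varphi_A(1_A)$; then for every $A'$ and $f \in \cat A(A', A)$, the right-module naturality of $\varphi$ applied to $1_A \in h_A(A)$ and the right action of $f$ yields
$$\varphi_{A'}(f) = \varphi_{A'}(1_A \cdot f) = \varphi_A(1_A) \cdot f = xf = \Psi(x)_{A'}(f),$$
so $\Psi(\mathrm{ev}_A(\varphi)) = \varphi$. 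It is worth noting that the right-module morphism condition recalled in the preceding subsection carries \emph{no} Koszul sign, since $f$ does not cross $\varphi$; this is precisely what allows the argument to go through uniformly for homogeneous $\varphi$ of arbitrary degree.

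Finally, naturality in $A$ and $F$ is a direct check. For a morphism $g \in \cat A(A, A')$, precomposition with $g$ yields a map $h_A \to h_{A'}$ of right modules, and both composites of the corresponding naturality square evaluate to $g \cdot \varphi_A(1_A)$ using the formula above; similarly, for a morphism of right dg-modules $\psi \colon F \to G$, both sides evaluate to $\psi_A(\varphi_A(1_A))$. The principal technical point of the whole argument is the Leibniz-rule computation showing $\Psi$ is a chain map of degree zero; everything else reduces to unitality, associativity of the action, and right-module naturality, without any additional sign manipulation.
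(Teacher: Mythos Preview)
The paper states the dg-Yoneda lemma without proof, treating it as a standard background result, so there is no ``paper's own proof'' to compare against. Your argument is the classical one and is correct in substance: the inverse $\Psi$, the Leibniz computation showing $\Psi$ is a chain map, and the mutual-inverse verifications are all fine and match the sign conventions of Remark~\ref{remark:modules_functors_actions}.

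Two minor slips worth cleaning up. First, a morphism $g \in \cat A(A,A')$ induces $h_A \to h_{A'}$ by \emph{post}-composition $f \mapsto g \circ f$, not pre-composition. Second, in the naturality-in-$A$ square you should start from $\varphi \in \Natdg(h_{A'},F)$ (since both sides are contravariant in $A$), and the common value of the two composites is $\varphi_{A'}(1_{A'}) \cdot g \in F(A)$, not ``$g \cdot \varphi_A(1_A)$''; the verification then reduces to $\varphi_A(g) = \varphi_A(1_{A'} \cdot g) = \varphi_{A'}(1_{A'}) \cdot g$ via the right-module condition, exactly as in your main argument. Neither point affects the validity of the proof.
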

From this, we obtain the \emph{(dg) Yoneda embedding}:
\begin{equation} \label{eq:yoneda_embedding}
\begin{split}
h \colon \cat A & \to \compdg{\cat A}, \\
A & \mapsto h_A = \cat A(-,A).
\end{split}
\end{equation}

\subsection{Dg-adjunctions}
The notion of adjoint dg-functors is a direct generalisation of the notion of ordinary adjoint functors.
\begin{defin}
Let $F \colon \cat A \leftrightarrows \cat B \colon G$ be dg-functors. We say that $F$ is a \emph{left adjoint of $G$} (and $G$ is a \emph{right adjoint} of $F$), writing $F \dashv G$, if there is an isomorphism of complexes:
\begin{equation}
\varphi_{A,B} \colon \cat B(F(A),B) \isorightarrow \cat A(A,G(B)),
\end{equation}
natural in both $A$ and $B$.
\end{defin}
As in ordinary category theory, a dg-adjunction $F \dashv G \colon \cat A \leftrightarrows \cat B$ is determined by its \emph{counit} $\varepsilon \colon FG \to 1_{\cat B}$ or by its \emph{unit} $\eta \colon 1_{\cat A} \to GF$; they are both closed and degree $0$ natural transformations, and satisfy the usual universal properties. For instance, for any $f \in \cat A(A, G(B))$ there exists a unique $f' \in \cat B (F(A), B)$ such that $f = G(f')\eta_A$:
\begin{equation} \label{eq:adj_unit}
\begin{gathered}
\xymatrix{
A \ar[d]^{\eta_A} \ar[r]^f & G(B) \\
GF(A) \ar@{.>}[ur]_{G(f')}.
}
\end{gathered}
\end{equation}

Clearly, a dg-adjunction $F \dashv G$ induces adjunctions $Z^0(F) \dashv Z^0(G)$ and $H^0(F) \dashv H^0(G)$, with units and counits naturally induced by the unit and counit of $F \dashv G$.

\subsubsection*{Fully faithful adjoint functors} A dg-functor $F \colon \cat A \to \cat B$ is \emph{(dg-)fully faithful} if for any $A, A' \in \cat A$, the map on hom-complexes
\begin{equation*}
F_{(A,A')} \colon \cat A(A,A') \to \cat B(F(A), F(A'))
\end{equation*}
is an isomorphism of complexes. Moreover, we say that $F$ is \emph{(dg-)essentially surjective} if for any $B \in \cat B$ there exists $A \in \cat A$ such that $B \cong F(A)$ (in other words, $Z^0(F)$ is essentially surjective). When we are given an adjunction $F \dashv G$, then we have the following useful characterisation:
\begin{prop} \label{prop:adj_fullyfaithful}
Let $F \dashv G \colon \cat A \leftrightarrows \cat B$ be an adjunction of dg-functors. Then, $F$ is fully faithful if and only if the unit $\eta \colon 1_{\cat A} \to GF$ is an isomorphism. Dually, $G$ is fully faithful if and only if the counit $\varepsilon \colon FG \to 1_{\cat B}$ is an isomorphism.
\end{prop}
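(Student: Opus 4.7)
The plan is to translate dg-full-faithfulness of $F$ into an invertibility condition on the unit by exploiting the defining isomorphism $\varphi_{A,B}$ of the adjunction. The central observation is the formula
\[
\varphi_{A,B}(g) = G(g) \circ \eta_A,
\]
valid for any morphism $g \colon F(A) \to B$. I would derive this from the definition $\eta_A = \varphi_{A, F(A)}(1_{F(A)})$ together with dg-naturality of $\varphi$ in the $B$-variable; since $\varphi$ is an isomorphism of complexes (hence a chain map and graded $\basering k$-linear), the formula extends automatically to $g$ of arbitrary degree.

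Specialising to $B = F(A')$ and combining with dg-naturality of $\eta$ (which yields $G(F(f)) \circ \eta_A = \eta_{A'} \circ f$, with no signs since $\eta$ is closed of degree $0$), I would identify the composite
\[
\cat A(A, A') \xrightarrow{F_{(A,A')}} \cat B(F(A), F(A')) \xrightarrow{\varphi_{A, F(A')}} \cat A(A, GF(A'))
\]
with post-composition by $\eta_{A'}$. Since $\varphi_{A, F(A')}$ is always an isomorphism of complexes, this implies that $F_{(A,A')}$ is an isomorphism for all $A$ if and only if $(\eta_{A'})_* \colon \cat A(-, A') \to \cat A(-, GF(A'))$ is an isomorphism of right $\cat A$-dg-modules.

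The final step is a Yoneda-style argument: $(\eta_{A'})_*$ is invertible as a morphism of $\cat A$-modules if and only if $\eta_{A'}$ is invertible in $Z^0(\cat A)$. One direction is immediate from functoriality of $(-)_*$; for the converse, evaluate at $A = GF(A')$ to produce a unique closed degree $0$ morphism $\xi \colon GF(A') \to A'$ satisfying $\eta_{A'} \circ \xi = 1_{GF(A')}$, and then use injectivity of $(\eta_{A'})_*$ at $A = A'$ on the identity $\eta_{A'} \circ (\xi \circ \eta_{A'}) = \eta_{A'} \circ 1_{A'}$ to deduce $\xi \circ \eta_{A'} = 1_{A'}$. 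The dual statement for $G$ and $\varepsilon$ is obtained by the symmetric argument: from $\varphi^{-1}_{A,B}(f) = \varepsilon_B \circ F(f)$, the composite $\varphi^{-1}_{G(B), B'} \circ G_{(B,B')}$ is identified with pre-composition by $\varepsilon_B$, and the same Yoneda-style argument applies.

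The only real point of care throughout is the management of Koszul signs when extending the formulas for $\varphi$ and $\varphi^{-1}$ to morphisms of non-zero degree, but since $\eta$ and $\varepsilon$ are themselves closed of degree zero, no signs actually enter the identifications that matter; the argument is thus essentially a translation of the classical one-categorical proof into the dg-setting.
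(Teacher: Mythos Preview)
Your argument is correct and is the standard proof of this classical fact, carried over carefully to the dg-setting. Note, however, that the paper does not actually supply a proof of this proposition: it is stated as a known result and immediately followed by the stronger Proposition~\ref{prop:adj_leftinverse_unitiso}, for which only a reference is given. So there is no ``paper's own proof'' to compare against; your write-up simply fills in what the paper leaves implicit, and does so correctly.
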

This proposition can be strenghtened, giving a very useful result. It is a direct adaptation of \cite[Lemma 1.3]{johnstone-moerdijk-toposes}, which is proved using the formalism of (co)monads:
\begin{prop} \label{prop:adj_leftinverse_unitiso}
Let $F \dashv G \colon \cat A \leftrightarrows \cat B$ be an adjunction of dg-functors. Then, $GF \cong 1_{\cat A}$ if and only if the unit $\eta \colon 1_{\cat A} \to GF$ is an isomorphism. Dually, $FG \cong 1_{\cat B}$ if and only if the counit $\varepsilon \colon FG \to 1_{\cat B}$ is an isomorphism. In particular, $F$ is fully faithful if and only if $GF \cong 1_{\cat A}$, and $G$ is fully faithful if and only if $FG \cong 1_{\cat B}$.
\end{prop}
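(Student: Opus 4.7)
The ``if'' directions of both bi-implications are immediate, and the concluding sentence follows by combining the two main equivalences with Proposition \ref{prop:adj_fullyfaithful}. I focus on the non-trivial direction of the first equivalence: assuming a natural iso $\alpha \colon GF \isorightarrow 1_{\cat A}$, I will show that $\eta$ itself is an iso. The statement for $FG$ and $\varepsilon$ is then proved in formally dual fashion, using the comonad $FG$ on $\cat B$ in place of the monad $T := GF$ on $\cat A$.

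The strategy exploits the monad structure on $T$, whose unit is $\eta$ and whose multiplication is $\mu := G\varepsilon_F \colon T^2 \to T$; the relevant triangle identity, applied at $F(A)$, reads $\mu_A \circ \eta_{T(A)} = 1_{T(A)}$. Define the natural endomorphism
\[
\sigma := \alpha \circ \eta \colon 1_{\cat A} \longrightarrow 1_{\cat A}
\]
together with its candidate left inverse
\[
\tau := \alpha \circ \mu \circ T(\alpha^{-1}) \circ \alpha^{-1} \colon 1_{\cat A} \longrightarrow 1_{\cat A};
\]
morally, $\tau$ is the multiplication $\mu$ transported along $\alpha$ to the identity functor. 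A quick calculation — use naturality of $\eta$ at the morphism $\alpha_A^{-1}$ to rewrite $T(\alpha_A^{-1}) \circ \eta_A = \eta_{T(A)} \circ \alpha_A^{-1}$, then apply the triangle identity — collapses $\tau_A \circ \sigma_A$ to $\alpha_A \circ \alpha_A^{-1} = 1_A$. Therefore $\tau \circ \sigma = 1$.

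The decisive observation is that the $\basering k$-algebra $Z^0(\Natdg(1_{\cat A}, 1_{\cat A}))$ of closed degree $0$ natural endomorphisms of $1_{\cat A}$ is commutative: for any $\rho, \rho'$ in it, naturality of $\rho$ evaluated at the endomorphism $\rho'_A$ of $A$ forces $\rho_A \rho'_A = \rho'_A \rho_A$. Hence from $\tau \circ \sigma = 1$ one immediately obtains $\sigma \circ \tau = 1$, so $\sigma$ is invertible and $\eta = \alpha^{-1} \circ \sigma$ is an iso, as required. The main subtlety lies in spotting the correct formula for $\tau$; once that is in hand, the rest is short naturality bookkeeping plus the commutativity remark above.
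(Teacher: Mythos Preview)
Your proof is correct. The paper does not spell out an argument but refers to \cite[Lemma 1.3]{johnstone-moerdijk-toposes}, noting only that it is ``proved using the formalism of (co)monads''; your proof is exactly such a monad-theoretic argument, with the Eckmann--Hilton commutativity of $Z^0(\Natdg(1_{\cat A},1_{\cat A}))$ serving as the device that upgrades the one-sided identity $\tau\sigma = 1$ to a two-sided one.
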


%

\section{Ends and coends} \label{chapter:dgends}
Let $\cat A$ be a dg-category, and let $F \colon \opp{\cat A} \otimes \cat A \to \compdg{\basering k}$ be a dg-bi(endo)module. The aim is to construct a complex which (co)equalises the right and left actions of $\cat A$ on $F$. This leads to the definition of \emph{(co)end}, given in general for dg-functors $\opp{\cat A} \otimes \cat A \to \cat B$. These notions will give us some very useful computational tools. This section is devoted to the development of ends and coends in dg-category theory; a good readable introduction to (co)end calculus in ordinary category theory can be found in \cite{fosco-coend}. Our treatment is just a particular case of the definitions and results given in enriched category theory: possible references for the general setting are \cite{kelly-enriched} or \cite{dubuc-enriched}. The first results mentioned in this section are all proved with a direct verification of a universal property: for the sake of brevity, we limit ourselves to writing down the statements, leaving the proofs to the reader.
\begin{defin} \label{def:end}
Let $F \colon \opp{\cat A} \otimes \cat A \to \cat B$ be a dg-functor. An \emph{end} of $F$ is an object $X_F \in \cat B$ together with closed degree $0$ maps
\begin{equation*}
\varepsilon_A \colon X_F \to F_A^A
\end{equation*}
for all $A \in \cat A$, satisfying the following universal property:
\begin{equation}
\begin{gathered}
\xymatrix{
X' \ar@/_/[ddr]_{f_{A'}} \ar@/^/[drr]^{f_A} \ar@{.>}[dr]|-{f}\\
&X_F \ar[d]^{\varepsilon_{A'}} \ar[r]_{\varepsilon_A} & F_A^A \ar[d]^{F^A_h} \\
&F^{A'}_{A'} \ar[r]^{F_{A'}^h} & F^A_{A'}}
\end{gathered}
\end{equation}
that is, for any $h \in \cat A(A,A')$ the above square with vertex $X_F$ is commutative, and for any $X'$ together with closed degree $0$ maps $f_A \colon X' \to F_A^A$ such that the ``curved square'' with vertex $X'$ is commutative, there exists a unique closed degree $0$ map $f \colon X' \to X_F$ such that $f_A = \varepsilon_A f$ for all $A \in \cat A$. 
\end{defin}
Dualising, we get the definition of coend:
\begin{defin} \label{def:coend}
Let $F \colon \opp{\cat A} \otimes \cat A \to \cat B$ be a dg-functor. A \emph{coend} of $F$ is an object $Y_F \in \cat B$ together with closed degree $0$ maps
\begin{equation*}
\eta_A \colon F^A_A \to Y_F
\end{equation*}
for all $A \in \cat A$, satisfying the following universal property:
\begin{equation}
\begin{gathered}
\xymatrix{
F^A_{A'} \ar[d]_{F^h_{A'}} \ar[r]^{F^A_h} & F_A^A \ar[d]_{\eta_A} \ar@/^/[ddr]^{g_A} &   \\
F^{A'}_{A'} \ar[r]^{\eta_{A'}} \ar@/_/[drr]_{g_{A'}} & Y_F \ar@{.>}[dr]|-{g} & \\
& & Y',
}
\end{gathered}
\end{equation}
that is, for any $h \in \cat A(A',A)$ the above square with vertex $Y_F$ is commutative, and for any $Y'$ together with closed degree $0$ maps $g_A \colon F_A^A \to Y'$ such that the ``curved square'' with vertex $Y'$ is commutative, there exists a unique closed degree $0$ map $g \colon Y_F \to Y'$ such that $g_A = g \eta_A$ for all $A \in \cat A$. 
\end{defin}
\begin{remark}
Ends and coends are defined as couples $(X_F, (\varepsilon_A))$ or $(Y_F, (\eta_A))$; we will often abuse notation and refer to them as their underlying objects $X_F$ and $Y_F$. 

As for any object defined with a universal property, ends and coends, if they exist, are uniquely determined up to canonical isomorphism, so that we may speak of \emph{the} (co)end of a dg-functor $F \colon \opp{\cat A} \otimes \cat A \to \cat B$. We will adopt the integral notation: the end of $F$ will be denoted by
\begin{equation}
\int_A F(A,A) = \int_A F^A_A, \nomenclature{$\int_A F(A,A)$}{The end of $F$}
\end{equation}
and the coend of $F$ will be denoted by
\begin{equation}
\int^A F(A,A) = \int^A F^A_A. \nomenclature{$\int^A F(A,A)$}{The coend of $F$}
\end{equation}
\end{remark}
The existence of ends and coends is not assured in general; however, it holds true for bimodules, i.e. dg-functors $\opp{\cat A} \otimes \cat A \to \compdg{\basering k}$.
\begin{prop} \label{prop:end_complexes}
Let $F \colon \opp{\cat A} \otimes \cat A \to \compdg{\basering k}$ be an $\cat A$-$\cat A$-bimodule. Then, the end of $F$ is isomorphic to the subcomplex of $\prod_{A \in \cat A} F(A,A)$ defined by
\begin{equation*}
V_F = \{ \varphi = (\varphi_A)_{A \in \cat A}: f \varphi_A = (-1)^{|f||\varphi|} \varphi_{A'} f \quad \forall\,f \in \cat A(A, A') \}.
\end{equation*}
The map $\varepsilon_A \colon V_F \to F(A,A)$ is defined by $\varphi \mapsto \varphi_A$.
\end{prop}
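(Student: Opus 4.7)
My approach is to verify directly that the candidate complex $V_F$, equipped with the component projections $\varepsilon_A$, satisfies the universal property of Definition~\ref{def:end}. The plan has four steps: (1) show $V_F$ is a subcomplex of $\prod_A F(A,A)$, (2) show the $\varepsilon_A$ are closed degree-zero maps making the ``end square'' commute, (3) construct the factorization for a competing cone, and (4) note uniqueness, which is automatic since the $\varepsilon_A$ are coordinate projections.

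For step~(1), given a degree-$p$ element $\varphi=(\varphi_A)\in V_F$, I would check that the componentwise differential $d\varphi=(d\varphi_A)$ still satisfies $f(d\varphi_A)=(-1)^{|f|(p+1)}(d\varphi_{A'})f$ for all $f\in\cat A(A,A')$. Applying $d$ to the defining identity $f\varphi_A=(-1)^{|f|p}\varphi_{A'}f$ via the Leibniz rule in $F(A,A')$ (valid because left and right $\cat A$-actions are chain maps) gives
\begin{equation*}
(df)\varphi_A+(-1)^{|f|}f(d\varphi_A)=(-1)^{|f|p}(d\varphi_{A'})f+(-1)^{|f|p+p}\varphi_{A'}(df).
\end{equation*}
Now reapplying the defining relation to the element $df$, which has degree $|f|+1$, kills the two outer terms and yields the desired identity after dividing by $(-1)^{|f|}$ (noting $(-1)^{|f|(p-1)}=(-1)^{|f|(p+1)}$).

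For steps~(2) and~(3), the key is translating the abstract end diagram into the element-level action notation from Remark~\ref{remark:modules_functors_actions}. For $h\in\cat A(A,A')$ one has $F^A_h(y)=hy$ (no sign, since $1_A$ has degree $0$) while $F^h_{A'}(y)=(-1)^{|h||y|}yh$. Consequently the commutativity of the square with vertex $X_F$, evaluated on $\varphi$ of degree $p$, becomes precisely $h\varphi_A=(-1)^{|h|p}\varphi_{A'}h$, which is the defining relation of $V_F$. Similarly, for a test cone $(X',(f_A))$ of closed degree-zero chain maps making the curved square commute, evaluating on an element $x\in X'$ of degree $p$ yields $hf_A(x)=(-1)^{|h|p}f_{A'}(x)h$; hence the family $(f_A(x))_A$ lies in $V_F$, and the map $f\colon X'\to V_F$ defined by $x\mapsto (f_A(x))_A$ is a closed degree-zero chain map satisfying $\varepsilon_A\circ f=f_A$ by construction. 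Uniqueness follows from the fact that $\varepsilon_A$ are the projections: any $g\colon X'\to V_F$ with $\varepsilon_A\circ g=f_A$ must agree componentwise with $f$.

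The main obstacle is bookkeeping the Koszul signs correctly and identifying which slot of $F$ is being acted upon, since $F$ is contravariant in its first argument and covariant in its second; once this dictionary is in place, each step is essentially a direct computation.
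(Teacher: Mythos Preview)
Your proposal is correct and follows exactly the approach the paper intends: the paper explicitly states that ``the first results mentioned in this section are all proved with a direct verification of a universal property'' and omits the argument, and your four-step check (subcomplex, cone condition, factorisation, uniqueness) is precisely that verification, with the Koszul signs handled correctly via the dictionary of Remark~\ref{remark:modules_functors_actions}.
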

\begin{coroll} \label{coroll:nattrans_end}
Let $F, G \colon \cat A \to \cat B$ be dg-functors. Then, the complex of dg-natural transformation $\Natdg(F,G)$, together with the canonical maps
\begin{align*}
\Natdg(F,G) & \to \cat B(F(A),G(A)), \\
\varphi & \mapsto \varphi_A,
\end{align*}
is an end of the bimodule $h^F_G = \cat B(F(-),G(-))$:
\begin{equation}
\int_A \cat B(F(A), G(A)) \cong \Natdg(F,G).
\end{equation} 
\end{coroll}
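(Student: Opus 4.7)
The plan is to apply Proposition \ref{prop:end_complexes} directly to the bimodule $h^F_G \colon \opp{\cat A} \otimes \cat A \to \compdg{\basering k}$, and then check that unwinding the commutation condition one obtains precisely the definition of a dg-natural transformation. Concretely, I would first identify the bimodule structure on $h^F_G$: for $g \in \cat B(F(A'), G(A))$ and $h \in \cat A(A, A'')$ acting on the covariant variable, one has $h \cdot g = G(h) \circ g$, while for $h' \in \cat A(A''', A')$ acting on the contravariant variable, $g \cdot h' = g \circ F(h')$ (up to the Koszul sign, in accordance with Remark \ref{remark:modules_functors_actions}).

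Next, by Proposition \ref{prop:end_complexes}, the end $\int_A \cat B(F(A), G(A))$ is realised as the subcomplex of $\prod_{A \in \cat A} \cat B(F(A), G(A))$ consisting of those tuples $\varphi = (\varphi_A)_{A \in \cat A}$ satisfying
\begin{equation*}
h \cdot \varphi_A = (-1)^{|h||\varphi|}\, \varphi_{A'} \cdot h \qquad \forall\, h \in \cat A(A, A').
\end{equation*}
Substituting the explicit actions on $h^F_G$ computed in the previous step, this translates to
\begin{equation*}
G(h) \circ \varphi_A = (-1)^{|h||\varphi|}\, \varphi_{A'} \circ F(h),
\end{equation*}
which is exactly the sign-commutativity axiom characterising a dg-natural transformation $\varphi \colon F \to G$ of degree $|\varphi|$. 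Hence $V_{h^F_G}$ and $\Natdg(F,G)$ coincide as subcomplexes of $\prod_A \cat B(F(A), G(A))$, and under this identification the structure map $\varepsilon_A$ of the end is $\varphi \mapsto \varphi_A$, as required.

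I do not expect any genuine obstacle: the result is a bookkeeping statement, and the only mild subtlety is to keep the Koszul signs consistent when converting between the ``functorial'' form of the actions on $h^F_G$ and the ``left/right action'' form used in Proposition \ref{prop:end_complexes}. Once this is done, the corollary follows formally. The naturality in $F$ and $G$, should it be desired, is a direct consequence of the functoriality of the end construction in its argument.
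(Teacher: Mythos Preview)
Your proposal is correct and is precisely the argument the paper has in mind: the corollary is stated without proof, falling under the paper's blanket remark that ``the first results mentioned in this section are all proved with a direct verification of a universal property\ldots leaving the proofs to the reader.'' Your direct application of Proposition~\ref{prop:end_complexes} to $h^F_G$, together with the identification of the left/right actions as $f\cdot\varphi_A = G(f)\circ\varphi_A$ and $\varphi_{A'}\cdot f = \varphi_{A'}\circ F(f)$, recovers exactly the dg-naturality condition, and your caveat about tracking Koszul signs via Remark~\ref{remark:modules_functors_actions} is the only point requiring care.
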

\begin{prop}
Let $F \colon \opp{\cat A} \otimes \cat A \to \compdg{\basering k}$ be an $\cat A$-$\cat A$-bimodule. Then, the coend of $F$ is isomorphic to the complex
\begin{align*}
W_F =  \coker \left( \bigoplus_{A_1, A_2 \in \cat A} \cat A(A_2, A_1)  \otimes \right. & F(A_1, A_2)  \left. \longrightarrow \bigoplus_{A \in \cat A} F(A,A) \right) \\
f \otimes x & \mapsto fx - (-1)^{|f||x|}xf,
\end{align*}
together with the natural maps
\begin{equation*}
\eta_A \colon F(A,A) \to \bigoplus_{A'} F(A',A') \to W_F.
\end{equation*}
\end{prop}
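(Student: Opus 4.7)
The plan is to verify the universal property of Definition \ref{def:coend} for the proposed object $W_F$ together with the structural maps $\eta_A$. This is formally dual to Proposition \ref{prop:end_complexes}, so the argument parallels that one with arrows reversed, replacing subobjects of a product by quotients of a coproduct.

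First I would check that each $\eta_A \colon F(A,A) \to W_F$ is a closed degree $0$ chain map (immediate, since it is the restriction of the canonical map to the cokernel of a chain map of complexes) and that the family $(\eta_A)$ satisfies the wedge/dinaturality condition of Definition \ref{def:coend}: for every $h \in \cat A(A',A)$, I need
\begin{equation*}
\eta_A \circ F^A_h \;=\; \eta_{A'} \circ F^h_{A'}.
\end{equation*}
Translating via the left/right action dictionary of Remark \ref{remark:modules_functors_actions}, applied to an element $x \in F(A, A')$, this amounts to the identity $\eta_A(hx) = (-1)^{|h||x|}\eta_{A'}(xh)$, which is exactly the defining relation of the quotient $W_F$. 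Hence the wedge condition is built into the construction.

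Next, given any other candidate $(Y', (g_A))$ with closed degree $0$ maps $g_A \colon F(A,A) \to Y'$ satisfying the wedge condition, the family assembles into a single chain map $\widetilde{g} \colon \bigoplus_A F(A,A) \to Y'$ by the universal property of the direct sum. The wedge condition on $(g_A)$ translates, again via Remark \ref{remark:modules_functors_actions}, to the identity $g_{A_1}(fx) = (-1)^{|f||x|} g_{A_2}(xf)$ for all $f \in \cat A(A_2, A_1)$, $x \in F(A_1, A_2)$. This means $\widetilde{g}$ kills the image of the map $\bigoplus_{A_1,A_2} \cat A(A_2, A_1) \otimes F(A_1, A_2) \to \bigoplus_A F(A,A)$ defining $W_F$, and so $\widetilde{g}$ factors uniquely through $W_F$, yielding the required map $g \colon W_F \to Y'$ with $g_A = g \circ \eta_A$. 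Uniqueness of $g$ comes from surjectivity of $\bigoplus_A F(A,A) \twoheadrightarrow W_F$: any two maps out of $W_F$ agreeing after precomposing with all $\eta_A$ coincide.

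The only point requiring genuine care is the bookkeeping of Koszul signs when comparing the bifunctorial formulation of Definition \ref{def:coend} (in terms of $F^A_h$, $F^h_{A'}$) with the left/right action relation $fx - (-1)^{|f||x|}xf$ used to define $W_F$. The conversion formula $F(f \otimes g)(x) = (-1)^{|f|(|x|+|g|)} g x f$ of Remark \ref{remark:modules_functors_actions} handles the translation uniformly, and after applying it the wedge condition and the quotient relation become literally the same equation, so the verification reduces to a routine computation dual to the one of Proposition \ref{prop:end_complexes}.
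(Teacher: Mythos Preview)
Your proposal is correct and follows exactly the approach the paper intends: the paper explicitly leaves this result (like Proposition \ref{prop:end_complexes}) to the reader as a direct verification of the universal property, and that is precisely what you carry out, with the Koszul sign bookkeeping handled correctly via Remark \ref{remark:modules_functors_actions}.
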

In the following, we explore the properties of ends and coends, and we develop the tools of (co)end calculus.
\begin{defin} \label{defin:cap1_end_preservation}
Let $F \colon \opp{\cat A} \otimes \cat A \to \cat B$ and $G \colon \cat B \to \cat C$ be dg-functors. Assume that $\left( \int_A F(A,A), (\varepsilon_A) \right)$ is an end of $F$. We say that \emph{$G$ preserves the end $\int_A F(A,A)$}  if $\left( G \left( \int_A F(A,A) \right), (G(\varepsilon_A)) \right)$ is an end of $GF$. Dualising, we directly get the definition of \emph{preservation of coends}.
\end{defin}
We will often allow ourselves to abuse notation and write for instance
\begin{equation*}
G(\int_A F(A,A)) \cong \int_A GF(A,A),
\end{equation*}
to mean that $G$ preserves the end of $F$. The following is an important result:
\begin{prop} \label{prop:cap2_hom_preserves_ends}
The hom dg-functor preserves ends. That is, given a dg-functor $F \colon \opp{\cat A} \otimes \cat A \to \cat B$ and assuming that $\int_A F(A,A)$ and $\int^A F(A,A)$ both exist, then:
\begin{align}
\cat B\left(B, \int_A F(A,A)\right) & \cong \int_A \cat B(B, F(A,A)), \label{eq:hom_end} \\
\cat B \left(\int^A F(A,A), B \right) & \cong \int_A \cat B(F(A,A),B), \label{eq:hom_coend}
\end{align}
for all $B \in \cat B$.
\end{prop}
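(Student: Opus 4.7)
The strategy for both isomorphisms is identical: exhibit a natural comparison map via (pre- or post-)composition with the universal (co)unit, and verify that the source in each case satisfies the universal property characterising the target. I describe the end case in detail; the coend case is strictly dual, with the contravariant hom $\cat B(-, B)$ playing the role of $\cat B(B, -)$ and sending coends to ends.

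Define the candidate map
\[
\Phi \colon \cat B\Bigl(B, \int_A F(A,A)\Bigr) \longrightarrow \int_A \cat B(B, F(A,A))
\]
by postcomposition with the end counits, $\Phi(g) = (\varepsilon_A \circ g)_A$. I would first check that $\Phi$ is a chain map landing in the end subcomplex of $\prod_A \cat B(B, F(A,A))$ described by Proposition \ref{prop:end_complexes}: chain-map-ness is immediate since each $\varepsilon_A$ is closed of degree zero, and the sign-twisted compatibility $f\cdot(\varepsilon_A g) = (-1)^{|f||g|}(\varepsilon_{A'} g)\cdot f$ follows from the end relation $F^A_f \varepsilon_A = F^f_{A'} \varepsilon_{A'}$ upon postcomposing with $g$ and translating to the left/right action notation of Remark \ref{remark:modules_functors_actions}.

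To conclude that $\Phi$ is an isomorphism, I would show that the pair $\bigl(\cat B(B, \int_A F(A,A)), (\varepsilon_{A*})_A\bigr)$ is itself an end of the bimodule $\cat B(B, F(-,-))$ in $\compdg{\basering k}$; then $\Phi$ becomes the canonical comparison between two ends and is therefore an iso. Given a complex $Y$ and a compatible cone of chain maps $\psi_A \colon Y \to \cat B(B, F(A,A))$, I would construct the required factorisation $\psi \colon Y \to \cat B(B, \int_A F(A,A))$ pointwise: for each $y \in Y^n$, the family $(\psi_A(y))_A$ is a compatible family of degree $n$ morphisms $B \to F(A,A)$ in $\cat B$, which lifts uniquely to a degree $n$ morphism $\psi(y) \colon B \to \int_A F(A,A)$; linearity and chain-map-ness of $\psi$ follow from the uniqueness clause of the universal property.

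The expected main obstacle is that Definition \ref{def:end} literally states the universal property only for closed degree zero cones, whereas the argument above applies it to arbitrary-degree, possibly non-closed compatible families. I would bridge this either by applying the degree zero universal property separately to the cocycle and coboundary pieces of the $\psi_A$'s and gluing via linearity and uniqueness, or---more cleanly---by reading Definition \ref{def:end} in its enriched form as is standard for $\compdg{\basering k}$-enriched category theory (cf.\ \cite{kelly-enriched}). The coend isomorphism then follows by the dual construction: precomposition with the coend units $\eta_A$ gives a map $\cat B(\int^A F(A,A), B) \to \int_A \cat B(F(A,A), B)$ whose target is the end of $\cat B(F(-,-), B)$, and the same universal property argument---now for the contravariant hom $\cat B(-,B)$, which converts coends to ends---yields the second iso.
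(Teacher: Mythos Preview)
Your proposal is correct and follows exactly the route the paper indicates: the paper omits the argument entirely, stating only that it is ``a direct verification of a universal property'' left to the reader, and your verification via postcomposition with the $\varepsilon_A$ (resp.\ precomposition with the $\eta_A$) is precisely that. You are also right to flag the gap between Definition~\ref{def:end} as literally written (closed degree~$0$ cones only) and the enriched universal property you actually need; your second bridge---reading the definition in its enriched form, as the paper itself signals by citing \cite{kelly-enriched} and \cite{dubuc-enriched} and by Remark~\ref{remark:strong_univ_prop_ends}---is the correct resolution.

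One caution: your first proposed bridge (``cocycle and coboundary pieces'') does not work in a general dg-category $\cat B$. Even restricting to cocycles only gives you closed morphisms of arbitrary degree~$n$, not degree~$0$, and without shifts in $\cat B$ there is no way to reduce these to the degree~$0$ case; non-closed elements are worse still. Since you already prefer the enriched reading, just drop the first option.
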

\begin{remark} \label{remark:strong_univ_prop_ends}
The isomorphisms \eqref{eq:hom_end} and \eqref{eq:hom_coend} are actually a stronger (yet equivalent) version of the universal properties which define ends and coends. Recalling the characterisation of ends of $\compdg{\basering k}$-valued dg-functors of Proposition \ref{prop:end_complexes}, we see for instance that \eqref{eq:hom_end} is equivalent to the following statement: for any family of maps $\xi_A \colon B \to F(A,A)$ of degree $p$ such that the following diagram
\begin{equation*}
\xymatrix{
B \ar[r]^{\xi_A} \ar[d]_{\xi_{A'}} & F(A,A) \ar[d]^{F(1_A \otimes f)} \\
F(A',A') \ar[r]^{F(f \otimes 1_{A'})} & F(A,A')
}
\end{equation*}
is commutative up to the sign $(-1)^{pq}$, for all $f \colon A \to A'$ of degree $q$, there exists a unique $\xi \colon B \to \int_A F(A,A)$ of degree $p$ such that $\varepsilon_A \xi = \xi_A$ for all $A \in \cat A$.
\end{remark}
We are now able to show that ends and coends are dg-functorial. We write down statements only for ends, the case of coends being analogous.
\begin{prop}[Dg-functoriality]
Let $F, G \colon \opp{\cat A} \otimes \cat A \to \cat B$ be dg-functors, and assume that $\int_A F(A,A)$ and $\int_A G(A,A)$ exist. If $\varphi \colon F \to G$ is a dg-natural transformation, then there exists a natural morphism
\begin{equation*}
\int_A \varphi \colon \int_A F(A,A) \to \int_A G(A,A).
\end{equation*}
The mapping $\varphi \mapsto \int_A \varphi$ is a chain map, and moreover we have
\begin{align*}
\int_A \psi \varphi &= \int_A \psi \circ \int_A \varphi, \\
\int_A 1_F &= 1_{\int_A F(A,A)},
\end{align*}
assuming $\psi \colon G \to H$ and the existence of $\int_A H(A,A)$.
\end{prop}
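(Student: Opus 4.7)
The plan is to build the morphism $\int_A \varphi$ by applying the strong form of the universal property of ends described in Remark~\ref{remark:strong_univ_prop_ends}, which permits the factorisation of families of maps of arbitrary degree satisfying a suitably signed commutativity condition. Writing $\varepsilon^F_A$ and $\varepsilon^G_A$ for the structure maps of the ends of $F$ and $G$, I would first consider, for a dg-natural transformation $\varphi \colon F \to G$ of degree $p$, the family of degree $p$ maps
\begin{equation*}
\xi_A = \varphi_{(A,A)} \circ \varepsilon^F_A \colon \int_A F(A,A) \to G(A,A).
\end{equation*}
The goal is to show these maps assemble, via the universal property, into a unique degree $p$ morphism $\int_A \varphi \colon \int_A F(A,A) \to \int_A G(A,A)$ characterised by $\varepsilon^G_A \circ \int_A \varphi = \xi_A$.

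The compatibility condition to verify has two ingredients: the wedge-type relation satisfied by $\varepsilon^F_A$ as a component of an end of $F$, and the dg-naturality of $\varphi$ (which commutes with $F^f_B$ and $G^f_B$ up to a Koszul sign involving $|\varphi|$ and $|f|$). Splicing these two facts against each other should produce exactly the signed commutativity square required in Remark~\ref{remark:strong_univ_prop_ends} for the target end $\int_A G(A,A)$; this will be the main sign-bookkeeping step, but it is essentially formal once one writes the two squares side by side.

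Next, for the chain map property I would argue by uniqueness: both $d \bigl(\int_A \varphi \bigr)$ and $\int_A (d\varphi)$ fit into the factorisation diagram defining the end of $G$ with the same composite $\xi$-family (using that each $\varepsilon^G_A$ is closed of degree $0$ and that $d$ is a derivation of composition with the appropriate signs). Hence uniqueness in the universal property forces them to coincide. The same uniqueness trick handles functoriality: both $\int_A(\psi \varphi)$ and $(\int_A \psi) \circ (\int_A \varphi)$ satisfy $\varepsilon^H_A \circ (-) = \psi_{(A,A)} \varphi_{(A,A)} \varepsilon^F_A$, and identities are immediate since $1_{\int_A F(A,A)}$ trivially factors $\varepsilon^F_A$ through itself.

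I expect the only genuine obstacle to be the first step, namely matching signs when combining the end-compatibility of $\varepsilon^F_A$ with the Koszul-twisted naturality of $\varphi$; once this is in place, the remaining properties fall out essentially for free from the uniqueness clause in the universal property, without any further computation.
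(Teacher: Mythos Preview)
Your proposal is correct and follows essentially the same approach as the paper: define $\int_A \varphi$ via the strong universal property of Remark~\ref{remark:strong_univ_prop_ends} applied to the family $\varphi_{(A,A)} \varepsilon^F_A$, and deduce the chain map and functoriality properties from uniqueness. The paper's proof is simply a terser version of what you wrote, summarising the latter steps as ``the properties required follow by uniqueness arguments.''
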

\begin{proof}
Define $\int_A \varphi$ with the strong universal property explained in Remark \ref{remark:strong_univ_prop_ends}:
\begin{equation*}
\begin{gathered}
\xymatrix{
\int_A F(A,A) \ar@/_/[ddr]_{\varphi_{A',A'} \varepsilon_{A'}} \ar@/^/[drr]^{\varphi_{A,A}\varepsilon_A} \ar@{.>}[dr]|-{\int_A \varphi}\\
& \int_A G(A,A) \ar[d]^{\varepsilon_{A'}} \ar[r]_{\varepsilon_A} & G(A,A) \ar[d] \\
& G(A',A') \ar[r] & G(A,A').}
\end{gathered}
\end{equation*}
The properties required follow by uniqueness arguments.
\end{proof}
\begin{remark} \label{remark:cap1_ends_parameters}
Let $F \colon \opp{\cat A} \otimes \cat A \otimes \cat C \to \cat B$ be a dg-functor. Assume that, for all $C \in \cat C$, the end $\int_A F(A,A,C)$ exists. Then, it is dg-functorial in $C$. indeed, given $f \colon C \to C'$, we obtain a dg-natural transformation
\begin{equation*}
\varphi_f = F(-,-,f) \colon F(-,-,C) \to F(-,-,C')
\end{equation*}
of functors $\opp{\cat A} \otimes \cat A \to \cat B$, and by dg-functoriality we get a natural morphism:
\begin{equation*}
\int_A \varphi_f \colon \int_A F(A,A,C) \to \int_A F(A,A,C').
\end{equation*}
The mapping $f \mapsto \varphi_f \mapsto \int_A \varphi_f$ is dg-functorial, so in the end we get a dg-functor
\begin{equation*}
\int_A F(A,A,-) \colon \cat C \to \cat B,
\end{equation*}
together with natural transformations
\begin{equation*}
\varepsilon_A \colon \int_A F(A,A,-) \to F(A,A,-).
\end{equation*}
This is the ``end with parameters''. The same discussion can obviously be done for coends.
\end{remark}
The following result is an ``interchange law'' for (co)ends. With the integral notation, it becomes a ``categorical Fubini theorem''. We give the statement for ends:
\begin{prop}[``Fubini theorem''] \label{prop:Fubini}
Let $F \colon \opp{\cat A} \otimes \opp{\cat B} \otimes \cat A \otimes \cat B \to \cat C$ be a dg-functor. Assume that for all $A, A' \in \cat A$, the end
\begin{equation*}
\int_B F^{A, B}_{A', B}
\end{equation*}
exists. Then, there is a natural isomorphism:
\begin{equation*}
\int_{(A,B)} F^{A,B}_{A,B} \cong \int_A \int_B F^{A,B}_{A,B},
\end{equation*}
whenever one of these two ends exists. Moreover, if for all $B, B' \in \cat B$, the end $\int_A F^{A,B}_{A,B'}$ also exists, then
\begin{equation} \label{eq:Fubini}
\int_{(A,B)} F^{A,B}_{A,B} \cong \int_A \int_B F^{A,B}_{A,B} \cong \int_B \int_A F^{A,B}_{A,B},
\end{equation}
whenever one of these ends exist.
\end{prop}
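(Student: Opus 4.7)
The plan is to invoke the strong universal property of ends (Remark \ref{remark:strong_univ_prop_ends}) in order to reduce the general statement to the case of $\compdg{\basering k}$-valued bimodules, where Proposition \ref{prop:end_complexes} supplies an explicit subcomplex description. Concretely, by applying Proposition \ref{prop:cap2_hom_preserves_ends} twice I obtain, for every $X \in \cat C$, natural isomorphisms
\begin{equation*}
\cat C\left(X, \int_A \int_B F^{A,B}_{A,B}\right) \cong \int_A \int_B \cat C(X, F^{A,B}_{A,B})
\end{equation*}
and
\begin{equation*}
\cat C\left(X, \int_{(A,B)} F^{A,B}_{A,B}\right) \cong \int_{(A,B)} \cat C(X, F^{A,B}_{A,B}),
\end{equation*}
valid whenever the end on the left is known to exist (the intermediate step uses the parametric functoriality of ends recalled in Remark \ref{remark:cap1_ends_parameters}); so it suffices to compare the two right-hand sides as subcomplexes of $\prod_{(A,B)} \cat C(X, F^{A,B}_{A,B})$.

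Next, I would use Proposition \ref{prop:end_complexes} to identify $\int_{(A,B)} G$ with the complex of families $(\varphi_{(A,B)})$ satisfying the dinaturality condition for every morphism $(f,g)$ of $\cat A \otimes \cat B$, and $\int_A \int_B G$ with the families that are dinatural separately in the $\cat A$- and $\cat B$-variables. The core computation is that these two conditions coincide, because every morphism of $\cat A \otimes \cat B$ factors as $(f \otimes 1_{B'}) \circ (1_A \otimes g)$, so joint dinaturality on such factorisations is equivalent to the conjunction of the two separate dinaturality conditions. Tracking naturality in $X$ and invoking the dg-Yoneda lemma (Theorem \ref{thm:yoneda}), I conclude that whichever of $\int_{(A,B)} F$ or $\int_A \int_B F$ exists, it represents the same functor $\cat C(-, \cdot) \colon \opp{\cat C} \to \compdg{\basering k}$; hence both exist and are canonically isomorphic via the comparison map induced by the strong universal property applied to the cone $(\varepsilon_B \varepsilon_A)_{(A,B)}$.

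The symmetric identification $\int_{(A,B)} F^{A,B}_{A,B} \cong \int_B \int_A F^{A,B}_{A,B}$ is obtained by running the same argument with the roles of $\cat A$ and $\cat B$ interchanged, so that the full chain \eqref{eq:Fubini} follows under the stated hypotheses. The main obstacle I anticipate is the sign bookkeeping in the second step: dinaturality in the enriched/graded setting carries Koszul signs depending on the degrees of the family $\varphi$ and of the morphisms $f,g$, and the decomposition $(f \otimes g) = (f \otimes 1)(1 \otimes g) = (-1)^{|f||g|}(1 \otimes g)(f \otimes 1)$ introduces its own sign; one must verify carefully that these match so that the equivalence between joint and separate dinaturality is free of extraneous coefficients.
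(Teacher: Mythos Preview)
Your proposal is correct and matches the paper's intended approach. The paper does not write out a proof of Proposition \ref{prop:Fubini}, having announced at the start of Section \ref{chapter:dgends} that these results are left to the reader as direct verifications of the universal property; your argument---reducing via Proposition \ref{prop:cap2_hom_preserves_ends} and Remark \ref{remark:strong_univ_prop_ends} to the $\compdg{\basering k}$-valued case and then comparing joint and separate dinaturality through the explicit description of Proposition \ref{prop:end_complexes}---is exactly such a verification, and the Koszul sign issue you flag does resolve cleanly once one uses the factorisation $(f \otimes g) = (f \otimes 1_B)(1_{A} \otimes g)$ together with the sign convention for composition in $\cat A \otimes \cat B$.
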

\subsection{Yoneda lemma, revisited}
The complex of natural transformations between two dg-functors can be written as an end, as we have already seen in Corollary \ref{coroll:nattrans_end}. So, it is clear that Yoneda lemma can be restated employing this formalism.
\begin{prop}[Yoneda lemma] \label{prop:yoneda_end}
Let $\cat A$ be a dg-category, let $F \colon \cat A \to \compdg{\basering k}$ and $G \colon \opp{\cat A} \to \compdg{\basering k}$ be respectively a left and a right $\cat A$-dg-module. Then:
\begin{equation} \label{eq:yoneda_end}
\begin{split}
F_- & \cong \int_A \compdg{\basering k}(h^-_A, F_A), \\
G^- & \cong \int_A \compdg{\basering k}(h^A_-, G^A).
\end{split}
\end{equation}
where the natural maps $\varepsilon_A \colon F_- \to \compdg{\basering k}(h_A^-, F_A)$ and $\varepsilon'_A \colon G^- \to \compdg{\basering k}(h^A_-, G^A)$ are defined respectively by
\begin{align*}
\varepsilon_A(x)(f) &= (-1)^{|x||f|} fx, \\
\varepsilon'_A(y)(g) &= yg.
\end{align*}
\end{prop}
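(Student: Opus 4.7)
The plan is to derive both isomorphisms from the dg-Yoneda lemma (Theorem \ref{thm:yoneda}), using Corollary \ref{coroll:nattrans_end} as the bridge that rewrites the complex of dg-natural transformations as an end. Conceptually, there is nothing new to prove: the statement is simply a repackaging of the classical Yoneda lemma in end notation, with explicit structure maps to verify.

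For the right-module statement, I would proceed as follows. Fix $B \in \cat A$; by Corollary \ref{coroll:nattrans_end} applied to the dg-category $\opp{\cat A}$ and the right $\cat A$-modules $h_B$ and $G$, the end $\int_A \compdg{\basering k}(h^A_B, G^A)$ is naturally isomorphic to $\Natdg(h_B, G)$. The dg-Yoneda lemma identifies the latter with $G(B)$ through $\varphi \mapsto \varphi_B(1_B)$. Letting $B$ vary as the free variable ``$-$'', and invoking the dg-functoriality of ends in parameters (Remark \ref{remark:cap1_ends_parameters}), assembles these isomorphisms into a natural $G^- \cong \int_A \compdg{\basering k}(h^A_-, G^A)$. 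To obtain the explicit form of the structure map $\varepsilon'_A$, I would invert the Yoneda isomorphism: given $y \in G(B)$, the corresponding $\varphi \colon h_B \to G$ satisfies $\varphi_B(1_B) = y$, and the naturality $\varphi_A(g) = \varphi_A(1_B \cdot g) = \varphi_B(1_B) \cdot g = yg$, composed with the end projection $\varphi \mapsto \varphi_A$, recovers $\varepsilon'_A(y)(g) = yg$.

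The left-module statement is proved dually, by applying the above to $\opp{\cat A}$, under which left $\cat A$-modules become right $\opp{\cat A}$-modules and the representables $h^B$ correspond to the representables of $\opp{\cat A}$. The only genuine bookkeeping issue, and the main obstacle (such as it is), is tracking the Koszul sign in $\varepsilon_A(x)(f) = (-1)^{|x||f|} fx$: by Remark \ref{remark:modules_functors_actions}, a dg-natural transformation $\varphi \colon h^B \to F$ of degree $|x|$ satisfies
\begin{equation*}
\varphi_{A'}(f h) = (-1)^{|x||f|} f \, \varphi_A(h),
\end{equation*}
so setting $A = B$ and $h = 1_B$ gives $\varphi_A(f) = (-1)^{|x||f|} f \, \varphi_B(1_B) = (-1)^{|x||f|} fx$ once we put $x = \varphi_B(1_B)$, which is the advertised formula. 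Beyond this sign tracking there is no real difficulty, and the proof reduces to a chain of three natural isomorphisms already established in the text.
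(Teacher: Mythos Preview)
Your proposal is correct and matches the paper's intended approach: the paper does not spell out a proof, remarking earlier in the section that these results are ``proved with a direct verification of a universal property'' and leaving them to the reader, but the placement in the subsection ``Yoneda lemma, revisited'' makes clear that the argument is exactly the combination of Corollary~\ref{coroll:nattrans_end} and Theorem~\ref{thm:yoneda} that you carry out. One cosmetic slip: in your sign computation you set $A=B$, $h=1_B$, so the left-hand side is $\varphi_{A'}(f)$ rather than $\varphi_A(f)$; after relabelling $A'$ as $A$ the formula reads as stated.
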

%
Interestingly, there is a dual version of Yoneda lemma, which involves coends:
\begin{prop}[Co-Yoneda lemma] \label{prop:coYoneda}
Let $\cat A$ be a dg-category, let $F \colon \cat A \to \compdg{\basering k}$ and $G \colon \opp{\cat A} \to \compdg{\basering k}$ be respectively a left and a right $\cat A$-dg-module. Then:
\begin{equation} \label{eq:coYoneda}
\begin{split}
F_- & \cong \int^A h^A_- \otimes F_A, \\
G^- & \cong \int^A G^A \otimes h^-_A,
\end{split}
\end{equation}
where the associated maps $\eta_A \colon h^A_- \otimes F_A \to F_-$ and $\eta'_A \colon G^A \otimes h_A^- \to G^-$ are induced by the (left and right) actions of $\cat A$:
\begin{align*}
\eta_A(f \otimes x) &= fx, \\
\eta'_A(y \otimes g) &= yg.
\end{align*}
\end{prop}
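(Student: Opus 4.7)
My plan is to prove the first isomorphism $F_- \cong \int^A h^A_- \otimes F_A$ for a left dg-module $F$; the right-module version is the formal dual (obtained by replacing $\cat A$ with $\opp{\cat A}$). The strategy is the categorical folklore that a coend is best identified by testing it against maps out, via the ``hom converts coends to ends'' principle of Proposition \ref{prop:cap2_hom_preserves_ends}, combined with the ordinary Yoneda lemma.

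Concretely, for any left $\cat A$-dg-module $M$, I would compute the complex of dg-natural transformations out of $\int^A h^A_- \otimes F_A$ in three steps. First, applying the dual form of Proposition \ref{prop:cap2_hom_preserves_ends} inside the dg-category of left $\cat A$-modules (whose internal hom is $\Natdg$), I get
\begin{equation*}
\Natdg\left(\int^A h^A_- \otimes F_A,\ M\right)\ \cong\ \int_A \Natdg\bigl(h^A_- \otimes F_A,\ M\bigr).
\end{equation*}
Second, for each fixed $A$ the complex $F(A)$ is an ``external'' factor, so the tensor-hom adjunction \eqref{eq:internalhom_complex_1} applied pointwise in the free variable, together with naturality, identifies
\begin{equation*}
\Natdg\bigl(h^A_- \otimes F_A,\ M\bigr)\ \cong\ \enrich{\Hom}\bigl(F(A),\ \Natdg(h^A_-, M_-)\bigr)\ \cong\ \enrich{\Hom}\bigl(F(A),\ M(A)\bigr),
\end{equation*}
the last step being the left-module analogue of Theorem \ref{thm:yoneda}. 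Third, integrating over $A$ and invoking Corollary \ref{coroll:nattrans_end} gives
\begin{equation*}
\int_A \enrich{\Hom}(F(A), M(A))\ \cong\ \Natdg(F, M).
\end{equation*}
Chaining these natural isomorphisms produces $\Natdg(\int^A h^A_- \otimes F_A, M) \cong \Natdg(F, M)$ naturally in $M$, and the dg-Yoneda lemma applied to the dg-category of left $\cat A$-modules then forces a canonical isomorphism $\int^A h^A_- \otimes F_A \cong F$.

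To pin down the structure map $\eta_A$, I would trace the identity on $F$ backwards through the chain: the Yoneda isomorphism sends a natural transformation $\varphi \colon h^A \to M$ to $\varphi_A(1_A)$, so the inverse of our chain, applied to $1_F \in \Natdg(F,F)$, sends $f \otimes x \in \cat A(A,B) \otimes F(A)$ to $F(f)(x) = fx$; this is exactly the announced action map. The main technical point I expect to have to be careful about is step two: verifying that the tensor-hom adjunction holds at the level of dg-natural transformations (not merely hom-complexes at a single object), which is where the ``external'' nature of the complex factor $F(A)$ and the Koszul signs must be handled cleanly — but since the tensor $h^A_- \otimes F_A$ is applied variable-wise in the module argument, this reduces to the naturality of the complex-level adjunction \eqref{eq:internalhom_complex_1}.
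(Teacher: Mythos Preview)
Your argument is correct and follows the same philosophy as the paper's proof: identify the coend by computing maps out of it, using the tensor--hom adjunction together with the Yoneda lemma, and then trace the identity backwards to read off the structure maps $\eta_A$.

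The one genuine difference is the level at which the computation is carried out. The paper fixes an object $X \in \cat A$ and tests the candidate coend against an arbitrary \emph{complex} $V$, showing directly that $F_X$ satisfies the universal property of $\int^A h^A_X \otimes F_A$ in $\compdg{\basering k}$; this is a pointwise verification. You instead test against an arbitrary left \emph{module} $M$ and work inside the dg-category of left $\cat A$-modules, using $\Natdg$ as the internal hom. Your version is more global and slightly slicker once one accepts that the parametrised coend of Remark~\ref{remark:cap1_ends_parameters} is also a coend in the functor category (equivalently, that such coends are computed objectwise); the paper's version sidesteps that point by never leaving $\compdg{\basering k}$. A small further cosmetic difference: in the tensor--hom step the paper factors out $h^A_X$ and applies Yoneda to the right module $A \mapsto \compdg{\basering k}(F_A,V)$, whereas you factor out $F_A$ and apply Yoneda to $h^A$ --- both are equally valid.
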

\begin{proof}
We prove only the first isomorphism, the other one being dual. Let $V$ be a complex, and let $X \in \cat A$. We have the following chain of natural isomorphisms:
\begin{align*}
\int_A \compdg{\basering k}(h^A_X  & \otimes F_A, V) \\
& \cong \int_A \compdg{\basering k}(h^A_X, \compdg{\basering k}(F_A, V)) \\
&\overset{\text{(Yon.)}}\cong \compdg{\basering k}(F_X, V).
\end{align*}
This implies that $F_X$ represents the dg-functor $V \mapsto \int_A \compdg{\basering k}(h^A_X  \otimes F_A, V)$, and so by definition (recall the ``strong universal property'', Remark \ref{remark:strong_univ_prop_ends}) it is the expected coend:
\begin{equation*}
F_X \cong \int^A h^A_X \otimes F_A.
\end{equation*}
To understand what are the associated maps $\eta_A$, we follow the above chain backwards, starting from the unit $1_{F_X}$ and keeping track of its image:
\begin{align*}
1_{F_X} &\mapsto (\varepsilon_A(1_X)(f) = F(f)^*(1_X) = F(f)) \\
& \mapsto (\eta_A(f \otimes x) = F(f)(x) = fx). \qedhere
\end{align*}
\end{proof}
The above proof follows a typical pattern in (co)end calculus. To show that a certain object $X$ is a (co)end, we try to prove that it represents the suitable functor, and in doing so we make use of the computational tools developed so far: (co)end preservation, dg-functoriality, Fubini theorem, and so on. Typically, we end up writing a chain of natural isomorphisms. At every step, we should keep track of the natural maps associated to the written (co)end; the isomorphisms of the chain will always preserve them, and this knowledge allows us to understand what are the natural maps associated to the object $X$, as we did in the second part of the above proof.

\section{The derived category}
Let $\cat A$ be a dg-category. The homotopy category $\hocomp{\cat A}$ has a natural triangulated structure, induced objectwise from that of complexes of $\basering k$-modules. Let $M,N$ be right $\cat A$-modules; a morphism $M \to N$ in $\hocomp{\cat A}$ is a \emph{quasi-isomorphism} if $N(A) \to M(A)$ is a quasi-isomorphism of complexes for all $A \in \cat A$. $M$ and $N$ are said to be \emph{quasi-isomorphic} ($M \qis N$) if there exists a zig-zag  of quasi-isomorphisms:
\begin{equation*}
M \leftarrow M_1 \rightarrow M_2 \leftarrow \cdots \rightarrow N.
\end{equation*}
The \emph{derived category} of $\cat A$ is defined as the localisation of $\hocomp{\cat A}$ along quasi-isomorphisms:
\begin{equation*}
\dercomp{\cat A} = \hocomp{\cat A}[\mathrm{Qis}^{-1}]
\end{equation*}

When $\cat A = \basering k$, viewing the base ring as a dg-category with a single object, its derived category is by definition the derived category $\dercomp{\basering k}$ of complexes of $\basering k$-modules. The well-known results about $\dercomp{\basering k}$ have a direct generalisation to $\dercomp{\cat A}$ for any $\cat A$. We recollect them in the following statement.
\begin{defin}
Let $\cat A$ be a dg-category. A right $\cat A$-dg-module $M \in \compdg{\cat A}$ is \emph{acyclic} if $M(A)$ is an acyclic complex for all $A \in \cat A$. The full dg-subcategory of $\compdg{\cat A}$ of acyclic modules is denoted by $\acy{\cat A}$. \nomenclature{$\acy{\cat A}$}{The dg-category of acyclic right $\cat A$-modules, for a given dg-category $\cat A$}
\end{defin}
\begin{prop}[{\cite[Lemma 3.3]{keller-dgcat}}]
$\dercomp{\cat A}$ has a natural structure of triangulated category such that the localisation functor $\delta = \delta_{\cat A} \colon \hocomp{\cat A} \to \dercomp{\cat A}$ is exact.

A morphism $\alpha \colon F \to G$ in $\hocomp{\cat A}$ is a quasi-isomorphism if and only if its cone $\cone(\alpha)$ is acyclic. Moreover, $\dercomp{\cat A}$ is the Verdier quotient of $\hocomp{\cat A}$ modulo the acyclic modules:
\begin{equation}
\dercomp{\cat A} \cong \hocomp{\cat A} / \acy{\cat A}.
\end{equation}
\end{prop}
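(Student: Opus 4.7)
The plan is to reduce everything to the classical Verdier localisation formalism, using the fact that the triangulated structure on $\hocomp{\cat A}$ and the notion of quasi-isomorphism are both defined \emph{objectwise} in terms of the already-understood triangulated category $\hocomp{\basering k}$.

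First, I would equip $\hocomp{\cat A}$ with its triangulated structure: the shift is applied objectwise, and a candidate triangle is distinguished if and only if it is isomorphic in $\hocomp{\cat A}$ to the mapping cone triangle $M \to N \to \cone(\alpha) \to M[1]$ of some closed degree $0$ morphism $\alpha \colon M \to N$ of right $\cat A$-dg-modules. Verifying the axioms of a triangulated category is routine: each axiom can be checked objectwise, and objectwise the corresponding statement holds in $\hocomp{\basering k}$.

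Next, I would establish the characterisation of quasi-isomorphisms in part (2). Given $\alpha \colon M \to N$ in $\hocomp{\cat A}$, evaluating at any $A \in \cat A$ yields a distinguished triangle
\begin{equation*}
M(A) \to N(A) \to \cone(\alpha)(A) \to M(A)[1]
\end{equation*}
in $\hocomp{\basering k}$, whose associated long exact sequence in cohomology shows that $\alpha(A)$ is a quasi-isomorphism of complexes for all $A$ if and only if $\cone(\alpha)(A)$ is acyclic for all $A$, i.e.\ $\cone(\alpha) \in \acy{\cat A}$. This is the key technical step, but it is immediate once one has the objectwise triangulated structure.

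Using this, I would show that $\acy{\cat A}$ is a thick triangulated subcategory of $\hocomp{\cat A}$: closure under shifts is clear; closure under cones follows from the same objectwise long exact sequence (if $M, N$ are acyclic then so is $\cone(\alpha)$); and closure under direct summands is immediate because the cohomology of a summand is a summand of the cohomology. Therefore Verdier's general construction applies, giving $\hocomp{\cat A}/\acy{\cat A}$ the structure of a triangulated category with the quotient functor exact.

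Finally, I would identify $\dercomp{\cat A}$ with this Verdier quotient. By definition $\dercomp{\cat A} = \hocomp{\cat A}[\mathrm{Qis}^{-1}]$, while $\hocomp{\cat A}/\acy{\cat A}$ is, by Verdier's theorem, the localisation of $\hocomp{\cat A}$ at the class of morphisms whose cone lies in $\acy{\cat A}$. By part (2), this class coincides exactly with the quasi-isomorphisms, so the two localisations agree, and the triangulated structure on the right-hand side transfers to $\dercomp{\cat A}$ making $\delta$ exact. I expect the main obstacle to be not any single deep step but rather the careful bookkeeping needed to verify thickness of $\acy{\cat A}$ and to apply Verdier's construction correctly in the enriched (dg) setting; however, since all definitions are objectwise, each verification reduces to the familiar case $\cat A = \basering k$ treated in classical homological algebra.
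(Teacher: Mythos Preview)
The paper does not give its own proof of this proposition: it is stated with a citation to \cite[Lemma 3.3]{keller-dgcat} and nothing more. Your sketch is correct and is exactly the standard argument one finds in Keller's survey or in any treatment of derived categories of dg-modules, so there is nothing to compare.
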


\subsection{Resolutions} The localisation functor $\delta \colon \hocomp{\cat A} \to \dercomp{\cat A}$ can (and will) be assumed to be the identity on objects. Morphisms in $\dercomp{\cat A}$ are represented by ``roofs'' in $\hocomp{\cat A}$:
\begin{equation}
F \xleftarrow{\approx} F' \to G,
\end{equation}
where $F' \xrightarrow{\approx} F$ is a quasi-isomorphism. The idea is that $F'$ is suitable \emph{resolution} of $F$. In particular, we may assume it is a \emph{h-projective resolution}:
\begin{defin}
Let $F \in \compdg{\cat A}$. $F$ is \emph{h-projective} if, for all $N \in \acy{\cat A}$, the complex $\compdg{\cat A}(F, N)$ is acyclic. This equivalent to requiring that
\begin{equation*}
H^0(\compdg{\cat A})(F,N) = \hocomp{\cat A}(F, N) \cong 0
\end{equation*}
for all $N \in \acy{\cat A}$. The full dg-subcategory of $\compdg{\cat A}$ of h-projective dg-modules is denoted by $\hproj{\cat A}$. \nomenclature{$\hproj{\cat A}$}{The dg-category of h-projective right $\cat A$-dg-modules}
\end{defin}
The shift of a h-projective module is again h-projective; the same is true for the cone of a morphism of h-projective modules. so, $H^0(\hproj{\cat A})$ is a triangulated subcategory of $\hocomp{\cat A}$. Moreover, we have the following characterisation:
\begin{prop}[{\cite[Proposition 10.12.2.2]{bernstein-lunts-equivariant}}]
Let $P \in \compdg{\cat A}$ be a right $\cat A$-dg-module. Then, $P \in \hproj{\cat A}$ if and only if
\begin{equation*}
\delta \colon \hocomp{\cat A}(P,M) \to \dercomp{\cat A}(P,M)
\end{equation*}
is an isomorphism for all $M \in \compdg{\cat A}$.
\end{prop}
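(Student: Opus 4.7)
The plan is to exploit the description of $\dercomp{\cat A}$ as the Verdier quotient $\hocomp{\cat A}/\acy{\cat A}$ stated in the excerpt, and to unify both implications around a single preliminary observation: when $P$ is h-projective, the functor $\hocomp{\cat A}(P,-)$ sends quasi-isomorphisms to isomorphisms. To prove this lemma, I would complete any quasi-isomorphism $s\colon M_1 \to M_2$ to a distinguished triangle $M_1 \to M_2 \to N \to M_1[1]$ in $\hocomp{\cat A}$ with $N = \cone(s) \in \acy{\cat A}$, apply the cohomological functor $\hocomp{\cat A}(P,-)$, and read off from the resulting long exact sequence that $s_*$ is sandwiched between two groups $\hocomp{\cat A}(P, N[-1])$ and $\hocomp{\cat A}(P, N)$ that both vanish by h-projectivity of $P$, using that $\acy{\cat A}$ is stable under shifts.

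For the ``only if'' direction, a morphism in $\dercomp{\cat A}$ from $P$ to $M$ is represented by a left roof $P \xleftarrow{s} P' \xrightarrow{f} M$ with $s$ a quasi-isomorphism, as recalled in the excerpt. By the preliminary lemma, $s_*\colon \hocomp{\cat A}(P, P') \to \hocomp{\cat A}(P, P)$ is an isomorphism, so there exists $u \in \hocomp{\cat A}(P, P')$ with $s u = 1_P$; the roof is then equivalent to $P \xleftarrow{1_P} P \xrightarrow{fu} M$, yielding surjectivity of $\delta$. For injectivity, if $\delta(f) = 0$ for some $f \in \hocomp{\cat A}(P, M)$, the calculus of left fractions supplies a quasi-isomorphism $t\colon P'' \to P$ with $ft = 0$ in $\hocomp{\cat A}$, and choosing a section $v \in \hocomp{\cat A}(P, P'')$ with $tv = 1_P$ from the lemma gives $f = f \cdot 1_P = f t v = 0$. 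Conversely, if $\delta$ is an isomorphism for every $M$, then for any $N \in \acy{\cat A}$ the zero map $N \to 0$ is a quasi-isomorphism (its cone $N[1]$ is acyclic), so $N \cong 0$ in $\dercomp{\cat A}$ and hence $\hocomp{\cat A}(P, N) = H^0(\compdg{\cat A}(P, N)) = 0$ by hypothesis; applying the same reasoning to every shift $N[n]$, which is again acyclic, yields $H^n(\compdg{\cat A}(P, N)) = 0$ for every $n \in \mathbb{Z}$, hence $\compdg{\cat A}(P, N)$ is acyclic and $P \in \hproj{\cat A}$.

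The main obstacle is not conceptual but a matter of bookkeeping in the calculus of left fractions for $\dercomp{\cat A} = \hocomp{\cat A}/\acy{\cat A}$: one must verify carefully that a relation such as $su = 1_P$ in $\hocomp{\cat A}$ is indeed enough to reduce an arbitrary left roof with quasi-isomorphism $s$ to the trivial one with identity on the left leg, and similarly for the injectivity step. These are classical features of Verdier quotients by thick triangulated subcategories, and I would invoke rather than re-prove them; once granted, everything else in the argument reduces cleanly to the single vanishing property $\hocomp{\cat A}(P, N) = 0$ defining h-projectivity.
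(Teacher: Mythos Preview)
The paper does not give its own proof of this proposition; it is stated with a citation to \cite{bernstein-lunts-equivariant} and used as a black box. Your argument is a correct and standard proof of the result. The preliminary lemma (that $\hocomp{\cat A}(P,-)$ inverts quasi-isomorphisms when $P$ is h-projective) is exactly the right pivot, and both the surjectivity/injectivity steps via roof manipulations and the converse via $N \cong 0$ in $\dercomp{\cat A}$ are clean. One minor remark: since the paper already records, in the definition of h-projectivity, that acyclicity of $\compdg{\cat A}(P,N)$ is equivalent to $\hocomp{\cat A}(P,N)=0$ for all acyclic $N$, your final shift argument is implicitly subsumed there; but writing it out does no harm and makes the proof self-contained.
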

\begin{remark}
From the above proposition, we see that any quasi-isomorphism between h-projective dg-modules is actually a homotopy equivalence.
\end{remark}
The following result ensures the existence of h-projective resolutions, and explains their features:
\begin{prop}[{\cite[Proposition 3.1]{keller-dgcat}}] \label{prop:hproj_resolutions}
Any dg-module $F$ admits a \emph{h-projective resolution}, that is, a quasi-isomorphism
\begin{equation}
q_F \colon Q(F) \xrightarrow{\approx} F,
\end{equation}
natural in $F \in \hocomp{\cat A}$, where $Q(F)$ is h-projective. Moreover, $Q$ yields a fully faithful left adjoint $Q \colon \dercomp{\cat A} \to \hocomp{\cat A}$ to the localisation functor $\delta \colon \hocomp{\cat A} \to \dercomp{\cat A}$. The adjunction is obtained as follows:
\begin{equation} \label{eq:resolutions_adjunction_def}
\hocomp{\cat A}(Q(M), N) \xrightarrow{\delta} \dercomp{\cat A}(Q(M),N) \xrightarrow{(\delta(q_M)^{-1})^*} \dercomp{\cat A}(M,N).
\end{equation}
\end{prop}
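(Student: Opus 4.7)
The plan is to construct the resolution concretely via a semi-free tower, then derive functoriality, the adjunction, and full faithfulness by essentially formal arguments.

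First I would build $Q(F)$ as the colimit of a transfinite tower
\begin{equation*}
0 = Q_0 \hookrightarrow Q_1 \hookrightarrow Q_2 \hookrightarrow \cdots
\end{equation*}
where each step attaches ``cells'' that are shifts $h_A[n]$ of representables, with boundary maps chosen so that the mapping cone of the structural map $Q_i \to F$ becomes progressively acyclic: at stage $i+1$ one adjoins enough cells to surject onto every cohomology class of $\cone(Q_i \to F)$, ensuring that the resulting colimit $Q(F)$ carries a quasi-isomorphism $q_F \colon Q(F) \xrightarrow{\approx} F$. To see that $Q(F)$ is h-projective, I would combine three observations: the representables $h_A$ are h-projective by the dg-Yoneda lemma (Theorem \ref{thm:yoneda}), since $\compdg{\cat A}(h_A, N) \cong N(A)$ is acyclic for $N \in \acy{\cat A}$; shifts, direct sums, and cones of h-projectives are h-projective; and the filtered colimit along split inclusions preserves h-projectivity because $\compdg{\cat A}(Q(F), N)$ becomes a limit of a tower of acyclic complexes with split surjective transition maps, hence still acyclic.

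Next I would promote $Q$ to a functor on $\hocomp{\cat A}$. Given a homotopy class $[f] \colon F \to F'$, the composite $f \circ q_F \colon Q(F) \to F'$ admits a homotopy-unique lift through $q_{F'}$, because the map $\delta \colon \hocomp{\cat A}(Q(F), Q(F')) \to \dercomp{\cat A}(Q(F), Q(F'))$ is an isomorphism (by h-projectivity of $Q(F)$ and the preceding proposition), while $\delta(q_{F'})$ is already invertible in $\dercomp{\cat A}$. Uniqueness up to homotopy makes $Q([f])$ well-defined, and functoriality follows from the same uniqueness.

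For the adjunction, formula \eqref{eq:resolutions_adjunction_def} is by construction a composition of two isomorphisms: $\delta$ is iso on $\hocomp{\cat A}(Q(M), N)$ by h-projectivity, and pullback along $\delta(q_M)^{-1}$ is iso because $q_M$ is a quasi-isomorphism. Naturality in both variables follows from naturality of $q$ and of the localisation functor. Full faithfulness of $Q$ then follows from Proposition \ref{prop:adj_fullyfaithful}: tracking $1_{Q(M)}$ through \eqref{eq:resolutions_adjunction_def} shows that the unit is $\eta_M = \delta(q_M)^{-1}$, which is an isomorphism in $\dercomp{\cat A}$. The main obstacle is the first step --- concretely, verifying that the transfinite construction produces an h-projective module equipped with a genuine quasi-isomorphism onto $F$. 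The careful bookkeeping of which cohomology classes are killed at each stage and how the limit computing $\compdg{\cat A}(Q(F), N)$ interacts with acyclicity is where the real work lies; everything that comes after is purely formal.
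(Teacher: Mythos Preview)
The paper does not supply its own proof of this proposition; it is stated with attribution to \cite[Proposition 3.1]{keller-dgcat} and used as a black box. So there is nothing in the paper to compare your argument against. Your proposal is the standard semi-free resolution argument, which is essentially what the cited reference does, and it is correct.

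One small point of precision: when you say the tower consists of ``split inclusions'' and the dual tower has ``split surjective transition maps'', this holds only at the level of underlying graded modules, not as maps of complexes (the differential on the newly attached cells lands in $Q_i$). That is still enough: the inverse system $\{\compdg{\cat A}(Q_i,N)\}$ has degreewise split surjections, so $\varprojlim^1$ vanishes and acyclicity passes to the limit. You might want to make that distinction explicit, since a reader could otherwise wonder why the attaching maps do not obstruct splitting.
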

\begin{coroll} \label{coroll:hproj_enh_der}
The functor
\begin{equation} \label{eq:hproj_enh_der}
H^0(\hproj{\cat A}) \hookrightarrow \hocomp{\cat A} \xrightarrow{\delta} \dercomp{\cat A}
\end{equation}
is an equivalence of triangulated categories. 
\end{coroll}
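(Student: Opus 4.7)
The plan is to verify the three defining properties of a triangulated equivalence: full faithfulness, essential surjectivity, and exactness. All three follow essentially for free from the results quoted just before the corollary.

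For full faithfulness, I would apply the characterisation of h-projective objects given by the cited \cite[Proposition~10.12.2.2]{bernstein-lunts-equivariant}: for any h-projective $P$ and any $M \in \compdg{\cat A}$, the localisation map
\begin{equation*}
\delta \colon \hocomp{\cat A}(P,M) \to \dercomp{\cat A}(P,M)
\end{equation*}
is an isomorphism. Taking $M = P'$ another h-projective module yields precisely the full faithfulness of the composition \eqref{eq:hproj_enh_der} on morphism sets.

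For essential surjectivity, I would invoke Proposition \ref{prop:hproj_resolutions}: every dg-module $F$ admits an h-projective resolution $q_F \colon Q(F) \xrightarrow{\approx} F$. Since $q_F$ is a quasi-isomorphism, it becomes an isomorphism in $\dercomp{\cat A}$, so $F$ is isomorphic in $\dercomp{\cat A}$ to $Q(F) \in H^0(\hproj{\cat A})$. (Equivalently, one may observe that the functor in \eqref{eq:hproj_enh_der} is essentially a quasi-inverse to the fully faithful left adjoint $Q$ of Proposition \ref{prop:hproj_resolutions}: by the general fact recorded in Proposition \ref{prop:adj_leftinverse_unitiso}, the right adjoint $\delta$ of a fully faithful functor is essentially surjective onto the image of $Q$.)

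For the triangulated structure, the remark immediately preceding the corollary notes that $H^0(\hproj{\cat A})$ is a triangulated subcategory of $\hocomp{\cat A}$, since shifts and cones of h-projective modules remain h-projective. The localisation functor $\delta$ is exact by the previous proposition, so the composition in \eqref{eq:hproj_enh_der} is a fully faithful, essentially surjective exact functor between triangulated categories, hence a triangulated equivalence. I do not expect any serious obstacle here; the only step worth double-checking is that the h-projective resolutions supplied by Proposition \ref{prop:hproj_resolutions} really are objects of $\hproj{\cat A}$ rather than merely quasi-isomorphic to such, but this is part of the statement of that proposition.
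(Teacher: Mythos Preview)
Your argument is correct and is precisely the intended deduction: the paper states this as a corollary with no explicit proof, and your three checks (full faithfulness via the Bernstein--Lunts characterisation, essential surjectivity via Proposition~\ref{prop:hproj_resolutions}, exactness from the triangulated subcategory remark) are exactly what one unpacks from the adjunction $Q \dashv \delta$ with $Q$ fully faithful. There is nothing to add.
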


The above discussion can be dualised. In fact, morphisms $F \to G$ in $\dercomp{\cat A}$ can also be represented as ``coroofs'':
\begin{equation*}
F \xrightarrow{\approx} R(F) \leftarrow G,
\end{equation*}
where $F \to R(F)$ is a \emph{h-injective resolution}. The results discussed above have their obvious counterparts. For the reader's convenience, we state the definition of h-injective dg-module and the analogue of Proposition \ref{prop:hproj_resolutions}:
\begin{defin}
Let $\cat A$ be a dg-category, and let $F \in \compdg{\cat A}$. $F$ is \emph{h-injective} if, for all $N \in \acy{\cat A}$, the complex $\compdg{\cat A}(N, F)$ is acyclic. This equivalent to requiring that
\begin{equation*}
H^0(\compdg{\cat A})(N,F) = \hocomp{\cat A}(N, F) \cong 0
\end{equation*}
for all $N \in \acy{\cat A}$. The full dg-subcategory of $\compdg{\cat A}$ of h-injective dg-modules is denoted by $\hinj{\cat A}$. \nomenclature{$\hinj{\cat A}$}{The dg-category of h-injective $\cat A$-dg-modules}
\end{defin}
\begin{prop}[{\cite[Proposition 3.1]{keller-dgcat}}] \label{prop:hinj_resolutions}
Every dg-module $F$ admits a \emph{h-injective resolution}, that is, a quasi-isomorphism
\begin{equation}
r_F \colon F \xrightarrow{\approx} R(F),
\end{equation}
natural in $F \in \hocomp{\cat A}$, where $R(F)$ is h-injective. Moreover, $Q$ yields a fully faithful right adjoint $R \colon \dercomp{\cat A} \to \hocomp{\cat A}$ to the localisation functor $\delta \colon \hocomp{\cat A} \to \dercomp{\cat A}$. The adjunction is obtained as follows:
\begin{equation} \label{eq:resolutions_adjunction_def_2}
\dercomp{\cat A}(M, N) \xrightarrow{\delta(r_N)_*} \dercomp{\cat A}(M,R(N)) \xrightarrow{\delta^{-1}} \hocomp{\cat A}(M,R(N)).
\end{equation}
\end{prop}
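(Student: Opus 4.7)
The statement is exactly dual to Proposition \ref{prop:hproj_resolutions}, and my plan is to mirror that proof throughout. The foundational ingredient is the dual characterization of h-injectives: $J \in \compdg{\cat A}$ is h-injective if and only if $\delta \colon \hocomp{\cat A}(M, J) \to \dercomp{\cat A}(M, J)$ is an isomorphism for every $M$. Using the Verdier quotient $\dercomp{\cat A} \cong \hocomp{\cat A} / \acy{\cat A}$, this is established directly: any morphism in $\dercomp{\cat A}(M, J)$ is represented by a roof $M \xleftarrow{s} M' \to J$ with $\cone(s)$ acyclic, and applying $\hocomp{\cat A}(-, J)$ to the triangle $M' \to M \to \cone(s) \to M'[1]$ together with $\hocomp{\cat A}(\cone(s)[k], J) = 0$ shows that $s^*$ is an isomorphism, whence both surjectivity and injectivity of $\delta$ follow.

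The main technical step is to construct, functorially in $F \in \hocomp{\cat A}$, a quasi-isomorphism $r_F \colon F \to R(F)$ with $R(F)$ h-injective. Here I would dualize the small-object / transfinite argument used for h-projective resolutions: replace coproducts by products and, at each successor stage, kill all maps into $F_\alpha$ from a chosen set of acyclic test modules via suitable extensions, passing to inverse limits at limit ordinals. After a sufficiently large ordinal, $R(F) := F_\lambda$ satisfies $\hocomp{\cat A}(N, R(F)) = 0$ for every acyclic $N$ (hence is h-injective), and $r_F$ is a quasi-isomorphism because its cone is built from acyclic pieces. Naturality in $F$ is obtained by lifting classes stage by stage through the tower. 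Alternatively, the existence may be invoked directly from Keller's \cite[Prop.~3.1]{keller-dgcat}.

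With these pieces in place, the adjunction isomorphism \eqref{eq:resolutions_adjunction_def_2} is the composite of the post-composition isomorphism $\delta(r_N)_* \colon \dercomp{\cat A}(M, N) \isorightarrow \dercomp{\cat A}(M, R(N))$ (an isomorphism because $r_N$ is a quasi-isomorphism) with the inverse $\delta^{-1} \colon \dercomp{\cat A}(M, R(N)) \isorightarrow \hocomp{\cat A}(M, R(N))$ furnished by the characterization above; naturality in $M$ and $N$ is immediate, establishing $\delta \dashv R$. Full faithfulness of $R$ then follows from Proposition \ref{prop:adj_leftinverse_unitiso}: unwinding the adjunction formula, the counit at $N$ is the class $\delta(r_N)^{-1} \in \dercomp{\cat A}(R(N), N)$, which is an isomorphism by construction. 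The main obstacle is the construction in the second paragraph: in the projective case the representables $h_A$ provide compact generators that make the colimit construction transparent, whereas the injective dual requires a cogenerating input and iterated homotopy limits, which is notoriously more delicate to execute cleanly.
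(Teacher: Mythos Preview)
The paper does not give its own proof of this proposition: it is stated with a direct citation to \cite[Proposition~3.1]{keller-dgcat}, exactly as the companion Proposition~\ref{prop:hproj_resolutions} is. So there is nothing to compare at the level of argument --- the paper simply imports the result.

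Your proposal goes further than the paper and supplies an outline. The parts concerning the characterisation of h-injectives via $\delta$, the derivation of the adjunction isomorphism \eqref{eq:resolutions_adjunction_def_2}, and the full faithfulness of $R$ via the counit $\delta(r_N)^{-1}$ are all correct and cleanly argued. The only soft spot is the second paragraph: the naive dualisation of the small-object argument (``kill maps from acyclic test modules by extensions, take inverse limits'') does not work as stated, because h-injectivity is a lifting condition against \emph{all} acyclics, not a set, and inverse limits of quasi-isomorphisms need not be quasi-isomorphisms without additional control (e.g.\ fibrancy or Mittag--Leffler type conditions). Keller's actual construction goes through graded-injective modules and a totalisation, or alternatively one invokes Brown representability for the compactly generated category $\dercomp{\cat A}$. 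You acknowledge this difficulty yourself and offer the citation to Keller as a fallback; since that is precisely what the paper does, your proposal is adequate, but the transfinite sketch should not be presented as a proof without substantial further work.
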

\begin{remark}
If $M$ is an h-projective dg-module, then we may assume without loss of generality that $Q(M)=M$. Analogously, if $N$ is an h-injective dg-module, we may assume that $R(M)=M$.

Moreover, notice that h-projectives (and their resolutions) can be defined also in the opposite category $\opp{\hocomp{\cat A}}$: they coincide with h-injectives (and their resolutions) in $\hocomp{\cat A}$, and vice-versa.
\end{remark}

\subsection{The derived Yoneda embedding}
Let $\cat A$ be a dg-category, and let $A \in \cat A$. Then, the right $\cat A$-module $h_A$ is h-projective. Indeed, let $N \in \acy{\cat A}$ be an acyclic module. Then, by Yoneda lemma:
\begin{equation*}
\hocomp{\cat A}(h_A, N) \cong H^0(N^A) \cong 0.
\end{equation*}
So, we see that the Yoneda embedding $h_{\cat A}$ factors through $\hproj{\cat A}$, yielding
\begin{equation}
h_{\cat A} \colon \cat A \to \hproj{\cat A}.
\end{equation}
Taking $H^0$ and composing with the equivalence $H^0(\hproj{\cat A}) \xrightarrow{\sim} \dercomp{\cat A}$ of Corollary \ref{coroll:hproj_enh_der}, we obtain the so-called \emph{derived Yoneda embedding}:
\begin{equation} \label{eq:der_Yoneda}
H^0(\cat A) \hookrightarrow \dercomp{\cat A}.
\end{equation}
By definition, the essential image of this functor is the category $\qrep{\cat A}$ of quasi-representable right $\cat A$-modules. We denote by $\tria{\cat A}$ the smallest strictly full triangulated subcategory of $\dercomp{\cat A}$ which contains the image of \eqref{eq:der_Yoneda}. Moreover, we denote by $\per{\cat A}$ the idempotent completion of $\tria{\cat A}$, which coincides with the smallest strictly full triangulated subcategory of $\dercomp{\cat A}$ which contains the image of \eqref{eq:der_Yoneda} and it is thick, i.e. closed under direct summands; it can also be characterised as the subcategory of compact objects in $\dercomp{\cat A}$. The derived Yoneda embedding factors through $\tria{\cat A}$:
\begin{equation}
H^0(\cat A) \hookrightarrow \tria{\cat A} \hookrightarrow \per{\cat A}.
\end{equation}
\begin{defin} \label{def:pretr_tr_dgcat}
A dg-category $\cat A$ is \emph{pretriangulated} if $H^0(\cat A) \hookrightarrow \tria{\cat A}$ is an equivalence; it is \emph{triangulated} if $H^0(\cat A) \hookrightarrow \per{\cat A}$ is an equivalence.
\end{defin}
If $\cat A$ is a pretriangulated dg-category, then $H^0(\cat A)$ has a natural structure of triangulated category; furthermore, a dg-functor $F \colon \cat A \to \cat B$ between pretriangulated dg-categories induces an exact functor $H^0(F)$ between triangulated categories. For any dg-category $\cat A$, the dg-category $\compdg{\cat A}$ is pretriangulated.

\subsection{Derived functors and derived adjunctions}
Let $\cat A$ and $\cat B$ be dg-categories, and let $F \colon \hocomp{\cat A} \to \hocomp{\cat B}$ be an exact functor (in most situation, it is induced by a dg-functor). A typical question is the following: does $F$ induce an exact functor $F' \colon \dercomp{\cat A} \to \dercomp{\cat B}$ such that the diagram
\begin{equation*}
\xymatrix{
\hocomp{\cat A} \ar[r]^F \ar[d]^{\delta_{\cat A}} & \hocomp{\cat B} \ar[d]^{\delta_{\cat B}} \\
\dercomp{\cat A} \ar[r]^{F'} & \dercomp{\cat B}
}
\end{equation*}
is commutative? The answer is positive if $F$ preserves acyclic $\cat A$-modules (or, equivalently, quasi-isomorphisms). In this case, the induced functor $F'$ is often identified with $F$ itself.

In many situations, however, our given functor $F \colon \hocomp{\cat A} \to \hocomp{\cat B}$ does not preserve acyclics; nevertheless, it always does when restricted to h-projective (or h-injective) dg-modules:
\begin{lemma} \label{lemma:cap3_preservation_acyclics}
Let $F \colon \hocomp{\cat A} \to \hocomp{\cat B}$ be an exact functor. Then, $F$ maps dg-modules which are both acyclic and h-projective (resp. acyclic and h-injective) to acyclics, or equivalently it preserves quasi-isomorphisms between h-projective (resp. h-injective) dg-modules.
\end{lemma}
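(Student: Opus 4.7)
The plan is to reduce everything to the observation that an object of $\hocomp{\cat A}$ which is simultaneously acyclic and h-projective (or h-injective) is already isomorphic to the zero object in $\hocomp{\cat A}$, and then use that an additive exact functor trivially preserves the zero object. I will write the argument in the h-projective case; the h-injective case is strictly dual.

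First I would show: if $M \in \compdg{\cat A}$ is both acyclic and h-projective, then $M \cong 0$ in $\hocomp{\cat A}$. Indeed, by definition of h-projectivity and because $M$ itself is acyclic, $\hocomp{\cat A}(M,M) \cong 0$, so $1_M = 0$ in $\hocomp{\cat A}$. In any additive category this forces $M \cong 0$. Applying the additive exact functor $F$ gives $F(M) \cong F(0) \cong 0$ in $\hocomp{\cat B}$, and an object isomorphic to $0$ in $\hocomp{\cat B}$ is contractible, hence acyclic; this establishes the first assertion.

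Next I would prove the equivalence with preservation of quasi-isomorphisms between h-projectives. If $\alpha \colon P \to P'$ is such a quasi-isomorphism, then its cone $\cone(\alpha)$ is acyclic (by the proposition quoted from \cite{keller-dgcat}) and remains h-projective, since the class of h-projective dg-modules is closed under shifts and cones of morphisms, as recalled right after the definition of $\hproj{\cat A}$. By what we just proved, $F(\cone(\alpha))$ is acyclic; and since $F$ is exact we have $F(\cone(\alpha)) \cong \cone(F(\alpha))$ in $\hocomp{\cat B}$, so $F(\alpha)$ is a quasi-isomorphism. Conversely, if $F$ preserves quasi-isomorphisms between h-projectives and $M$ is acyclic and h-projective, then $M \to 0$ is a quasi-isomorphism between h-projectives, hence $F(M) \to 0$ is a quasi-isomorphism, i.e.\ $F(M)$ is acyclic.

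There is no real obstacle here; the only point that must be handled carefully is the closure of $\hproj{\cat A}$ under cones (so that $\cone(\alpha)$ is still h-projective), and the fact that $F$ commutes with the formation of cones up to isomorphism in $\hocomp{\cat B}$, both of which are already recorded in the text. The whole proof is a short, formal manipulation in the homotopy category, with no need to invoke resolutions or the derived category explicitly.
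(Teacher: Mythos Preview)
Your proof is correct and follows essentially the same approach as the paper. The only cosmetic difference is in how you show that an acyclic h-projective $M$ is zero in $\hocomp{\cat A}$: you argue directly that $\hocomp{\cat A}(M,M)=0$ forces $1_M=0$, while the paper phrases it as ``the quasi-isomorphism $M\to 0$ is a homotopy equivalence by h-projectivity''; these are the same observation, and your treatment of the equivalence with preservation of quasi-isomorphisms is just a more explicit spelling-out of what the paper summarises in one sentence.
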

\begin{proof}
Cones of morphisms between h-projective or h-injective dg-modules are easily seen to be themselves h-projective or h-injective. So, recalling that quasi-isomorphisms are precisely the morphisms whose cone is acyclic, it is clear that $F$ preserves quasi-isomorphisms between h-projectives (resp. h-injectives) if and only if it maps dg-modules which are both acyclic and h-projective (resp. acyclic and h-injective) to acyclics. Now, let $M \in \hocomp{\cat A}$ be h-projective and acyclic (or h-injective and acyclic). By h-projectivity (or h-injectivity) any quasi-isomorphism $M \to 0$ is actually a homotopy equivalence. So, $F(M) \approx F(0) =0$ in $\hocomp{\cat B}$, in particular it is acyclic.  
\end{proof}
Now, even if our functor $F \colon \hocomp{\cat A} \to \hocomp{\cat B}$ does not pass to the derived categories, it induces the so-called \emph{derived functors}. Abstractly, they are defined as Kan extensions:
\begin{defin}
Let $F \colon \hocomp{\cat A} \to \hocomp{\cat B}$ be a functor. A \emph{(total) left derived functor} $\lder F$ of $F$ \nomenclature{$\lder F$}{The left derived functor of $F$} is a \emph{right} Kan extension of $\delta_{\cat B} \circ F$ along $\delta_{\cat A}$:
\begin{equation*}
\xymatrix{
\ar @{} [dr] |{\Uparrow}
\hocomp{\cat A} \ar[d]^{\delta_{\cat A}} \ar[r]^F & \hocomp{\cat B} \ar[d]^{\delta_{\cat B}} \\
\dercomp{\cat A} \ar@{.>}[r]_{\lder F}
& \dercomp{\cat B}.
}
\end{equation*}
Dually, a \emph{(total) right derived functor} $\rder F$ of $F$ \nomenclature{$\rder F$}{The right derived functor of $F$} is a \emph{left} Kan extension of $\delta_B \circ F$ along $\delta_A$.
\end{defin}
Clearly derived functors, being Kan extensions, are uniquely determined up to isomorphism. The above Lemma \ref{lemma:cap3_preservation_acyclics} ensures that derived functors actually exist in our framework, and the following proposition gives their concrete definitions, which is what we will actually use. Its proof can be found in \cite[Theorem 2.2.8]{riehl-cathpy}, in a more general setting.
\begin{prop} \label{prop:cap3_existence_derivedfun}
Let $\cat A$ and $\cat B$ be dg-categories, and let $F \colon \hocomp{\cat A} \to \hocomp{\cat B}$ be an exact functor. We know that $F$ preserves quasi-isomorphisms between h-projectives; then, $F$ admits a left derived functor, obtained as follows:
\begin{equation}
\begin{split}
\lder F & \colon \dercomp{\cat A} \to \dercomp{\cat B}, \\
\lder F &= \delta_{\cat B} \circ F \circ Q_{\cat A}
\end{split}
\end{equation}
where $Q_{\cat A} \colon \dercomp{\cat A} \to \hocomp{\cat A}$ is the h-projective resolution functor of $\cat A$.

Dually, we know that $F$ preserves quasi-isomorphisms between h-injectives; then, $F$ admits a right derived functor, obtained as follows:
\begin{equation}
\begin{split}
\rder F & \colon \dercomp{\cat A} \to \dercomp{\cat B}, \\
\rder F &= \delta_{\cat B} \circ F \circ R_{\cat A},
\end{split}
\end{equation}
where $R_{\cat A} \colon \dercomp{\cat A} \to \hocomp{\cat A}$ is the h-injective resolution functor of $\cat A$.
\end{prop}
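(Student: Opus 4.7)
The plan is to verify the universal property of the right Kan extension directly; the construction of the right derived functor $\rder F$ is formally dual, using the adjunction $\delta_{\cat A} \dashv R_{\cat A}$ coming from Proposition \ref{prop:hinj_resolutions} in place of $Q_{\cat A} \dashv \delta_{\cat A}$, so I concentrate on $\lder F$. First I would observe that $\lder F := \delta_{\cat B} \circ F \circ Q_{\cat A}$ is a well-defined functor $\dercomp{\cat A} \to \dercomp{\cat B}$: the resolution functor $Q_{\cat A}$ lands in $\hproj{\cat A}$, and by Lemma \ref{lemma:cap3_preservation_acyclics} the exact functor $F$ sends quasi-isomorphisms between h-projectives to quasi-isomorphisms, so $\delta_{\cat B} \circ F$ inverts all the morphisms that $Q_{\cat A}$ produces.

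Next, I would construct the comparison 2-cell $\alpha \colon \lder F \circ \delta_{\cat A} \Rightarrow \delta_{\cat B} \circ F$. For $M \in \hocomp{\cat A}$, the natural quasi-isomorphism $q_M \colon Q_{\cat A}(\delta_{\cat A}(M)) \to M$ of Proposition \ref{prop:hproj_resolutions} is the counit of $Q_{\cat A} \dashv \delta_{\cat A}$; applying $F$ and then $\delta_{\cat B}$ yields
\begin{equation*}
\alpha_M := \delta_{\cat B}(F(q_M)) \colon \lder F(\delta_{\cat A}(M)) \to \delta_{\cat B}(F(M)),
\end{equation*}
which is natural in $M$. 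Because $Q_{\cat A}(\delta_{\cat A}(M))$ is h-projective and $q_M$ is a quasi-isomorphism between h-projectives (hence a homotopy equivalence, as noted after Proposition \ref{prop:hproj_resolutions}), the morphism $F(q_M)$ is a quasi-isomorphism in $\hocomp{\cat B}$, so $\alpha_M$ is in fact an isomorphism in $\dercomp{\cat B}$.

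Then I would verify the universal property. Suppose $G \colon \dercomp{\cat A} \to \dercomp{\cat B}$ carries a natural transformation $\beta \colon G \circ \delta_{\cat A} \Rightarrow \delta_{\cat B} \circ F$. Since $Q_{\cat A}$ is fully faithful, Proposition \ref{prop:adj_leftinverse_unitiso} implies that the unit $\eta_X \colon X \isorightarrow \delta_{\cat A}(Q_{\cat A}(X))$ of $Q_{\cat A} \dashv \delta_{\cat A}$ is an isomorphism, and I define
\begin{equation*}
\gamma_X := \beta_{Q_{\cat A}(X)} \circ G(\eta_X) \colon G(X) \to G(\delta_{\cat A}(Q_{\cat A}(X))) \to \delta_{\cat B}(F(Q_{\cat A}(X))) = \lder F(X).
\end{equation*}
Naturality in $X$ follows from naturality of $\beta$ and $\eta$. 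The identity $\alpha \circ (\gamma \ast \delta_{\cat A}) = \beta$ evaluated at $M$ reduces, after a diagram chase, to the triangle identity $\delta_{\cat A}(q_M) \circ \eta_{\delta_{\cat A}(M)} = \mathrm{id}_{\delta_{\cat A}(M)}$. Uniqueness of $\gamma$ follows because any competing $\gamma'$ satisfying the same compatibility is forced, via the invertibility of $\alpha_M$, to equal $\alpha_M^{-1} \circ \beta_M$ on objects of the form $\delta_{\cat A}(M)$; naturality then pins $\gamma'$ down on all of $\dercomp{\cat A}$, since every object is isomorphic via $\eta$ to one in the image of $\delta_{\cat A}$.

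The main obstacle is the triangle-identity bookkeeping needed to check the compatibility $\beta = \alpha \circ (\gamma \ast \delta_{\cat A})$ and the uniqueness of $\gamma$; no deeper idea is required beyond the adjunction $Q_{\cat A} \dashv \delta_{\cat A}$ and its fully faithful left adjoint. The dual argument for $\rder F$ uses the natural h-injective resolution $r_M \colon M \to R_{\cat A}(\delta_{\cat A}(M))$ (the unit of $\delta_{\cat A} \dashv R_{\cat A}$) to supply the 2-cell $\delta_{\cat B} \circ F \Rightarrow \rder F \circ \delta_{\cat A}$ pointing in the opposite direction, and the same formal argument exhibits $\rder F$ as the required left Kan extension.
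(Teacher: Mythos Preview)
The paper does not actually prove this proposition; it simply defers to \cite[Theorem 2.2.8]{riehl-cathpy}. Your direct verification of the Kan-extension universal property therefore supplies more than the paper does, and the overall shape of the argument---construct the comparison $2$-cell from the counit $q$, build the factorisation $\gamma$ from the unit $\eta$, and reduce compatibility to a triangle identity---is sound.

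There is, however, one genuine slip. You assert that $q_M \colon Q_{\cat A}(\delta_{\cat A}(M)) \to M$ is ``a quasi-isomorphism between h-projectives'' and hence that $\alpha_M = \delta_{\cat B}(F(q_M))$ is an isomorphism for every $M \in \hocomp{\cat A}$. This is false: the target $M$ is an arbitrary object of $\hocomp{\cat A}$, not an h-projective, so $F(q_M)$ need not be a quasi-isomorphism and $\alpha_M$ need not be invertible. (If $\alpha$ were always an isomorphism, $\delta_{\cat B}\circ F$ would already factor through $\delta_{\cat A}$ and there would be nothing to derive.) Your uniqueness argument, as written, appeals to $\alpha_M^{-1}$ for arbitrary $M$ and is therefore not valid.

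The repair is minor and is essentially contained in your last sentence. The map $\alpha_M$ \emph{is} invertible whenever $M$ is h-projective, since then both ends of $q_M$ lie in $\hproj{\cat A}$ and Lemma \ref{lemma:cap3_preservation_acyclics} applies. For an arbitrary $X \in \dercomp{\cat A}$, use the unit isomorphism $\eta_X \colon X \isorightarrow \delta_{\cat A}(Q_{\cat A}(X))$ to transport to $M = Q_{\cat A}(X)$, which is h-projective; there the equation $\alpha_{Q_{\cat A}(X)} \circ \gamma'_{\delta_{\cat A}(Q_{\cat A}(X))} = \beta_{Q_{\cat A}(X)}$ does determine $\gamma'_{\delta_{\cat A}(Q_{\cat A}(X))}$, and naturality along $\eta_X$ pins down $\gamma'_X$. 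Your verification of the compatibility $\alpha \circ (\gamma * \delta_{\cat A}) = \beta$ via the triangle identity $\delta_{\cat A}(q_M)\circ \eta_{\delta_{\cat A}(M)} = 1$ is correct as stated.
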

\begin{remark}
We have observed that, if the functor $F \colon \hocomp{\cat A} \to \hocomp{\cat B}$ preserves acyclics, then it directly induces a functor $F \colon \dercomp{\cat A} \to \dercomp{\cat B}$ between the derived categories. In this case, we don't need to derive $F$, indeed we immediately see that
\begin{equation*}
F \cong \lder F \cong \rder F \colon \dercomp{\cat A} \to \dercomp{\cat B}.
\end{equation*}
\end{remark}
We will often encounter adjunctions between categories of dg-modules. As expected, just as functors can be derived, the same is true for adjunctions:
\begin{prop} \label{prop:der_adj}
Let $\cat A$ and $\cat B$ be dg-categories, and let
\begin{equation*}
F \dashv G \colon \hocomp{\cat A} \leftrightarrows \hocomp{\cat B}
\end{equation*}
be an adjunction of exact functors. Then, there is a derived adjunction
\begin{equation}
\lder F \dashv \rder G \colon \dercomp{\cat A} \leftrightarrows \dercomp{\cat B},
\end{equation}
which is obtained composing the three adjunctions $Q_{\cat A} \dashv \delta_A$, $F \dashv G$ and $\delta_{\cat B} \dashv R_{\cat B}$. Namely:
\begin{equation}
\begin{split}
\dercomp{\cat B}(\lder F(M), N) & =\dercomp{\cat B}(\delta_{\cat B} F Q_{\cat A}(M), N)\\
& \cong \hocomp{\cat B}(F Q_{\cat A}(M), R_{\cat B}(N)) \\
& \cong \hocomp{\cat A}(Q_{\cat A}(M), G R_{\cat B}(N)) \\
& \cong \dercomp{\cat A}(M, \delta_{\cat A} G R_{\cat B}(N))  \\
& = \dercomp{\cat A}(M, \rder G(N)).
\end{split}
\end{equation}
\end{prop}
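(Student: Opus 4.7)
The plan is to directly realise the displayed chain of natural isomorphisms as a composition of three known adjunctions, and then to observe that the middle and outer terms of that chain are precisely the hom-bifunctors of $\lder F$ and $\rder G$ as given by Proposition \ref{prop:cap3_existence_derivedfun}. The three constituent adjunctions are already at our disposal: Proposition \ref{prop:hproj_resolutions} supplies $Q_{\cat A} \dashv \delta_{\cat A}$; the hypothesis gives $F \dashv G$ on the homotopy categories; and Proposition \ref{prop:hinj_resolutions} supplies $\delta_{\cat B} \dashv R_{\cat B}$. Since $Q_{\cat A}$ lands in $\hproj{\cat A}$ and $R_{\cat B}$ lands in $\hinj{\cat B}$, Lemma \ref{lemma:cap3_preservation_acyclics} guarantees that $F\circ Q_{\cat A}$ and $G\circ R_{\cat B}$ descend along the localisations, so that $\lder F = \delta_{\cat B}\circ F\circ Q_{\cat A}$ and $\rder G = \delta_{\cat A}\circ G\circ R_{\cat B}$ are genuine functors between the derived categories.

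Concretely, I would justify each step of the chain as follows. The first isomorphism $\dercomp{\cat B}(\delta_{\cat B} F Q_{\cat A}(M), N) \cong \hocomp{\cat B}(F Q_{\cat A}(M), R_{\cat B}(N))$ is the unit/counit isomorphism of $\delta_{\cat B}\dashv R_{\cat B}$, applied with source $FQ_{\cat A}(M)\in\hocomp{\cat B}$ and target $N\in\dercomp{\cat B}$. The middle isomorphism $\hocomp{\cat B}(F Q_{\cat A}(M), R_{\cat B}(N)) \cong \hocomp{\cat A}(Q_{\cat A}(M), G R_{\cat B}(N))$ is a direct instance of the given adjunction $F\dashv G$. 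The final isomorphism is the adjunction $Q_{\cat A}\dashv\delta_{\cat A}$ applied with source $M\in\dercomp{\cat A}$ and target $GR_{\cat B}(N)\in\hocomp{\cat A}$, yielding $\dercomp{\cat A}(M, \delta_{\cat A}GR_{\cat B}(N)) = \dercomp{\cat A}(M, \rder G(N))$.

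The remaining content is purely a naturality check. Each of the three isomorphisms is separately natural in both of its slots, and the variables $M\in\dercomp{\cat A}$ and $N\in\dercomp{\cat B}$ propagate through the chain only via applications of the (exact, hence functorial) maps $F, G, Q_{\cat A}, R_{\cat B}, \delta_{\cat A}, \delta_{\cat B}$. Therefore the composite is bi-natural in $(M,N)$, which is exactly the datum of an adjunction $\lder F\dashv \rder G$.

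I do not anticipate any serious obstacle: the only mild subtlety is verifying that the three adjunction isomorphisms can really be spliced together at the indicated intermediate objects (i.e. that $FQ_{\cat A}(M)$ and $GR_{\cat B}(N)$ lie in the correct categories so each adjunction applies), but this is immediate from the definitions of the resolution functors. A more conceptual proof, which I would mention parenthetically, is that the composite of adjunctions is always an adjunction; the statement above is then simply the explicit description of this composite, and the formulae for $\lder F$ and $\rder G$ match those of Proposition \ref{prop:cap3_existence_derivedfun} on the nose.
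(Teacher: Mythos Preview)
Your proposal is correct and follows exactly the approach indicated in the paper: the proposition is stated together with the displayed chain of isomorphisms, and the paper offers no further proof beyond that chain, treating each step as an immediate application of the three adjunctions $Q_{\cat A} \dashv \delta_{\cat A}$, $F \dashv G$, and $\delta_{\cat B} \dashv R_{\cat B}$. Your write-up simply makes explicit the justification of each isomorphism and the naturality check, which is precisely what the paper leaves implicit.
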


%

\section{Derived Isbell duality}
We study a duality result between dg-modules (and also bimodules) which is a vast generalisation of the duality of vector spaces over a field. It is called \emph{Isbell duality}, after John Isbell (see \cite{wood-totalcat} for a reference). Our notation here follows the one found on the $n$Lab\footnote{\texttt{ncatlab.org/nlab/show/Isbell+duality}}.
\begin{prop}[Isbell duality]
Let $\cat A$ be a dg-category. There is a dg-adjunction
\begin{equation}
\mathcal O \dashv \Spec \colon \compdg{\cat A} \leftrightarrows \opp{\compdg{\opp{\cat A}}}, \nomenclature{$\mathcal O \dashv \Spec$}{Isbell duality}
\end{equation}
where $\mathcal O$ and $\Spec$ are defined as follows:
\begin{align*}
\mathcal O(X)_A &= \compdg{\cat A}(X, h_A), \\
\Spec(M)^A &= \compdg{\opp{\cat A}}(M, h^A).
\end{align*}
\end{prop}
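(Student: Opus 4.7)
The plan is to unfold both sides of the desired natural isomorphism
\[
\compdg{\opp{\cat A}}(M, \mathcal O(X)) \cong \compdg{\cat A}(X, \Spec(M))
\]
as iterated ends of the same bifunctor in two variables $A, B \in \cat A$, and then conclude via the Fubini theorem (Proposition~\ref{prop:Fubini}). By Corollary~\ref{coroll:nattrans_end} both $\compdg{\cat A}(X,-)$ and $\compdg{\opp{\cat A}}(M,-)$ applied to a dg-module are computed as an end over the corresponding index category. Unfolding the definitions of $\mathcal O$ and $\Spec$ introduces, inside these ends, further hom-complexes $\compdg{\cat A}(X, h_B)$ and $\compdg{\opp{\cat A}}(M, h^A)$, which themselves admit end descriptions.

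The central computation I would carry out is
\begin{align*}
\compdg{\cat A}(X, \Spec(M))
&\cong \int_A \compdg{\basering k}\bigl(X(A), \Spec(M)(A)\bigr) \\
&\cong \int_A \compdg{\basering k}\Bigl(X(A), \int_B \compdg{\basering k}(M(B), \cat A(A, B))\Bigr) \\
&\cong \int_A \int_B \compdg{\basering k}\bigl(X(A), \compdg{\basering k}(M(B), \cat A(A, B))\bigr) \\
&\cong \int_A \int_B \compdg{\basering k}\bigl(X(A) \otimes M(B), \cat A(A, B)\bigr),
\end{align*}
using in order Corollary~\ref{coroll:nattrans_end}, the definition of $\Spec$, the preservation of ends by the hom dg-functor (Proposition~\ref{prop:cap2_hom_preserves_ends}), and the closed monoidal adjunction \eqref{eq:internalhom_complex_1} of $\compdg{\basering k}$. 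A completely analogous computation, starting instead by unfolding $\mathcal O(X)(B) = \compdg{\cat A}(X, h_B) = \int_A \compdg{\basering k}(X(A), \cat A(A, B))$, yields
\[
\compdg{\opp{\cat A}}(M, \mathcal O(X)) \cong \int_B \int_A \compdg{\basering k}\bigl(X(A) \otimes M(B), \cat A(A, B)\bigr),
\]
and Fubini then identifies the two iterated ends.

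The main obstacle is essentially bookkeeping: one has to keep track of the Koszul signs appearing in the tensor-hom adjunction and in the symmetry $X(A) \otimes M(B) \cong M(B) \otimes X(A)$, and verify that the resulting bijection is indeed dg-natural in both $X$ and $M$, so that it assembles into an adjunction of dg-functors rather than a mere isomorphism of complexes pointwise. Both points are handled automatically by the end-calculus formalism, since each isomorphism in the chain above is natural by construction and respects the $\compdg{\basering k}$-enriched structure; no genuinely new input beyond Yoneda, Fubini, and the closed monoidal structure of $\compdg{\basering k}$ is needed.
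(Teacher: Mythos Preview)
Your proof is correct and essentially the same as the paper's: both unfold the two hom-complexes as iterated ends via Corollary~\ref{coroll:nattrans_end}, apply the tensor--hom adjunction and Proposition~\ref{prop:cap2_hom_preserves_ends} to reach the common symmetric expression $\int\int \compdg{\basering k}(X(A)\otimes M(B), \cat A(A,B))$, and conclude by Fubini. The only cosmetic difference is that the paper writes a single chain of isomorphisms from one side to the other (passing through that middle expression) rather than expanding both sides separately and matching them there.
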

\begin{proof}
We have to prove that there is a natural isomorphism of complexes:
\begin{equation} \label{eq:Isbelldual_naturaliso}
\compdg{\opp{\cat A}}(M,\mathcal O(X)) \cong \compdg{\cat A}(X, \Spec(M)).
\end{equation}
We compute:
\begin{align*}
\compdg{\opp{\cat{A}}}(M, \mathcal O(X)) & \cong \int_A \compdg{\basering k}(M_A, \mathcal O(X)_A) \\
& = \int_A \compdg{\basering k}(M_A, \compdg{\cat A}(X, h_A)) \\
& = \int_A \compdg{\basering k}(M_A, \int_{A'} \compdg{\basering k}(X^{A'},h^{A'}_A))
\end{align*}
\begin{align*}
& \cong \int_A \int_{A'} \compdg{\basering k}(M_A \otimes X^{A'}, h^{A'}_A) \\
& \cong \int_{A'} \int_A \compdg{\basering k}(X^{A'}, \compdg{\basering k}(M_A, h^{A'}_A)) \\
& \cong \int_{A'} \compdg{\basering k}(X^{A'}, \compdg{\opp{\cat A}}(M, h^{A'})) \\
& \cong \compdg{\cat A}(X, \Spec(M)). \qedhere
\end{align*}
\end{proof}
$\mathcal O$ and $\Spec$ admit derived functors, by Proposition \ref{prop:cap3_existence_derivedfun}. So, we obtain the left derived functor
\begin{equation}
\begin{split}
&\lder \mathcal O  \colon \dercomp{\cat A} \to \opp{\dercomp{\opp{\cat A}}}, \\
&\lder \mathcal O (X) = \mathcal O (Q(X)).
\end{split}
\end{equation}
Analogously, $\Spec$ induces the right derived functor
\begin{equation}
\begin{split}
& \rder \Spec  \colon  \opp{\dercomp{\opp{\cat A}}} \to \dercomp{\cat A} , \\
&\rder \Spec (M) = \Spec(Q(M)).
\end{split}
\end{equation}
Notice that we employed the h-projective resolution even for $\rder \Spec$, because of contravariance. By Proposition \ref{prop:der_adj}, we get the derived adjunction
\begin{equation}
\lder \mathcal O \dashv \rder \Spec \colon \dercomp{\cat A} \to \opp{\dercomp{\opp{\cat A}}},
\end{equation}
which we call \emph{derived Isbell duality}.

An object $X \in \compdg{\cat A}$ is called \emph{Isbell autodual} if the unit $X \to \Spec (\mathcal O(X))$ is an isomorphism. If $X=h_A$ is represented by $A \in \cat A$, then
\begin{align*}
\mathcal O(X) &= \mathcal O(h_A) \\
&= \compdg{\cat A}(h_A, h_-) \\
&\cong \cat A(A,-) = h^A,
\end{align*}
and analogously
\begin{align*}
\Spec(h^A) &= \compdg{\opp{\cat A}}(h^A, h^-) \\
& \cong \cat A(-,A) = h_A.
\end{align*}
In the end, we have isomorphisms
\begin{align*}
h_A &\cong \Spec \mathcal O(h_A), \\
h^A &\cong \mathcal O(\Spec(h^A)),
\end{align*}
natural in $A \in \cat A$. By Proposition \ref{prop:adj_leftinverse_unitiso}, we deduce that $\cat A$-dg-modules of the form $h_A$ are Isbell autodual, and also, more precisely:
\begin{lemma} \label{lemma:repr_Isbell_equiv}
The dg-adjunction $\mathcal O \dashv \Spec$ restricts to an adjoint dg-equivalence
\begin{equation*}
\rep{\cat A} \leftrightarrows \opp{\rep{\opp{\cat A}}}, \nomenclature{$\rep{\cat A}$}{The dg-category of representable right $\cat A$-dg-modules}
\end{equation*}
where $\rep{\cat A}$ denotes the dg-category of representable right $\cat A$-modules.

Analogously, the induced adjunction $H^0(\mathcal O) \dashv H^0(\Spec)$ restricts to an adjoint equivalence
\begin{equation*}
\horep{\cat A} \leftrightarrows \opp{\horep{\opp{\cat A}}}, \nomenclature{$\horep{\cat A}$}{The category of homotopy representable $\cat A$-dg-modules}
\end{equation*}
where $\horep{\cat A}$ denotes the full subcategory of $\hocomp{\cat A}$ of $\cat A$-modules $X$ such that $X \approx h_A$ for some $A \in \cat A$.
\end{lemma}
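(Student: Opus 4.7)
The plan is to leverage the explicit computations carried out immediately before the statement: $\mathcal{O}(h_A) \cong h^A$ and $\Spec(h^A) \cong h_A$, both natural in $A \in \cat A$. First, these identities show that $\mathcal{O}$ and $\Spec$ restrict to dg-functors between the full dg-subcategories $\rep{\cat A}$ and $\opp{\rep{\opp{\cat A}}}$: any $X \in \rep{\cat A}$ is isomorphic to some $h_A$ in $\compdg{\cat A}$, whence $\mathcal{O}(X) \cong h^A \in \rep{\opp{\cat A}}$, and dually for $\Spec$. Since the ambient dg-adjunction $\mathcal{O} \dashv \Spec$ is expressed by a hom-complex isomorphism which is preserved under restriction to full dg-subcategories, we obtain a restricted dg-adjunction $\mathcal{O}|_{\rep{\cat A}} \dashv \Spec|_{\opp{\rep{\opp{\cat A}}}}$.

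Next, I would combine the above natural isomorphisms $\Spec\mathcal{O}(h_A) \cong h_A$ and $\mathcal{O}\Spec(h^A) \cong h^A$ with the fact that the Yoneda embedding gives a dg-equivalence $\cat A \to \rep{\cat A}$: naturality in $A \in \cat A$ transports through this equivalence to a dg-natural isomorphism $\Spec \circ \mathcal{O} \cong 1_{\rep{\cat A}}$ on all of $\rep{\cat A}$, and symmetrically $\mathcal{O} \circ \Spec \cong 1_{\opp{\rep{\opp{\cat A}}}}$. Applying Proposition \ref{prop:adj_leftinverse_unitiso} to the restricted adjunction, both the unit and the counit are then isomorphisms, i.e. we obtain the desired adjoint dg-equivalence.

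For the $H^0$-level statement, the argument runs in parallel but with $\cong$ replaced by $\approx$. Because $\mathcal{O}$ and $\Spec$ are dg-functors, they preserve homotopy equivalences, so $X \approx h_A$ in $\hocomp{\cat A}$ is mapped to something $\approx h^A$ in $\hocomp{\opp{\cat A}}$; this gives the expected restriction of $H^0(\mathcal{O}) \dashv H^0(\Spec)$ to $\horep{\cat A} \leftrightarrows \opp{\horep{\opp{\cat A}}}$. Applying $H^0$ to the two natural isomorphisms on strict representables produces natural isomorphisms of functors on $\horep{\cat A}$ and $\horep{\opp{\cat A}}$, using again essential surjectivity to extend from ``strict'' to ``essential'' homotopy representables. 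A second application of Proposition \ref{prop:adj_leftinverse_unitiso}, now at the $H^0$-level, concludes.

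The main subtlety, rather than a genuine obstacle, is the formal passage from naturality in $A \in \cat A$ to a natural isomorphism on the essentially-representable subcategories $\rep{\cat A}$ and $\horep{\cat A}$; but this is a routine consequence of the (essentially surjective, fully faithful) Yoneda embedding and the functoriality of $\Spec \circ \mathcal{O}$ and $\mathcal{O} \circ \Spec$. Everything else is a direct application of results already established in the excerpt.
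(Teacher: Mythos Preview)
Your proposal is correct and follows exactly the approach the paper intends: the paper's proof is essentially the paragraph preceding the lemma, where the natural isomorphisms $h_A \cong \Spec\mathcal O(h_A)$ and $h^A \cong \mathcal O\Spec(h^A)$ are established and Proposition~\ref{prop:adj_leftinverse_unitiso} is invoked. Your expansion of the details (restriction of the adjunction to full subcategories, passage from strict to essential representables via Yoneda, and the parallel $H^0$-level argument) simply makes explicit what the paper leaves implicit.
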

With a little more work, we are able to establish a similar result for the derived adjunction $\lder \mathcal O \dashv \rder \Spec$:
\begin{prop} \label{prop:derived_repr_Isbell_equiv}
The adjunction $\lder \mathcal O \dashv \rder \Spec$ restricts to an adjoint equivalence
\begin{equation*}
\qrep{\cat A} \leftrightarrows \opp{\qrep{\opp{\cat A}}}, \nomenclature{$\qrep{\cat A}$}{The category of quasi-representable $\cat A$-dg-modules}
\end{equation*}
where $\qrep{\cat A}$ is the full subcategory of $\dercomp{\cat A}$ of \emph{quasi-representable $\cat A$-modules}: $X \in \qrep{\cat A}$ if and only if $X$ is quasi-isomorphic to $h_A$ for some $A \in \cat A$.
\end{prop}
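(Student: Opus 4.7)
The plan is to reduce the derived statement on quasi-representable modules to the non-derived Isbell equivalence of Lemma \ref{lemma:repr_Isbell_equiv} on representable modules. The key input is the observation made at the start of the derived Yoneda subsection: $h_A$ is h-projective for every $A \in \cat A$, and dually $h^A$ is h-projective as a right $\opp{\cat A}$-module. Consequently, for any choice of h-projective resolution functor $Q$, one has homotopy equivalences $Q(h_A) \approx h_A$ in $\hocomp{\cat A}$ and $Q(h^A) \approx h^A$ in $\hocomp{\opp{\cat A}}$, since quasi-isomorphisms between h-projectives are homotopy equivalences.

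First I would verify that both derived functors restrict as claimed. For $X \in \qrep{\cat A}$ with $X \qis h_A$, the module $Q(X)$ is h-projective and quasi-isomorphic to $h_A$, hence homotopy equivalent to $h_A$ in $\hocomp{\cat A}$. Applying the dg-functor $\mathcal O$ and invoking Lemma \ref{lemma:repr_Isbell_equiv},
\[
\lder \mathcal O(X) = \mathcal O(Q(X)) \approx \mathcal O(h_A) \cong h^A
\]
in $\hocomp{\opp{\cat A}}$, so $\lder \mathcal O(X) \qis h^A$ and therefore $\lder \mathcal O(X) \in \qrep{\opp{\cat A}}$. The symmetric argument, using h-projectivity of $h^A$, shows that $\rder \Spec$ restricts to $\opp{\qrep{\opp{\cat A}}} \to \qrep{\cat A}$.

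Next I would show that the unit $\eta \colon \mathrm{id} \to \rder \Spec \circ \lder \mathcal O$ is an isomorphism on each object of $\qrep{\cat A}$. By naturality and the fact that every quasi-representable module is isomorphic in $\dercomp{\cat A}$ to some $h_A$, it suffices to verify this at $X = h_A$. Choosing $Q(h_A) = h_A$ and $Q(h^A) = h^A$, the explicit description of the derived unit from Proposition \ref{prop:der_adj} (as the composite of the units and counits for the three adjunctions $Q \dashv \delta$, $\mathcal O \dashv \Spec$, $\delta \dashv R$) collapses to the non-derived Isbell unit $h_A \to \Spec \mathcal O(h_A)$, which is an isomorphism by Lemma \ref{lemma:repr_Isbell_equiv}. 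A symmetric argument handles the counit on $\opp{\qrep{\opp{\cat A}}}$, yielding the desired adjoint equivalence.

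The main obstacle, more bookkeeping than conceptual, will be carefully unwinding the composite unit of Proposition \ref{prop:der_adj} on the representable module $h_A$ and recognising it as the non-derived Isbell unit under the identifications $Q(h_A) = h_A$ and $Q(h^A) = h^A$. Once this naturality-plus-pointwise check is in place, Lemma \ref{lemma:repr_Isbell_equiv} finishes the proof.
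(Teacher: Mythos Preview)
Your proposal is correct and close in spirit to the paper's proof: both reduce to Lemma \ref{lemma:repr_Isbell_equiv} via the h-projectivity of $h_A$ and $h^A$. The difference lies in how the conclusion is drawn. You directly identify the derived unit at $h_A$ with the non-derived Isbell unit (after making the resolution choices $Q(h_A)=h_A$, etc.), whereas the paper instead constructs natural isomorphisms $h_A \qis \rder\Spec\,\lder\mathcal O(h_A)$ and $h^A \qis \lder\mathcal O\,\rder\Spec(h^A)$ directly from the naturality of the resolution map $q$, without identifying them with the unit or counit, and then concludes using that $\qrep{\cat A}$ is the isomorphism closure of the representables (implicitly invoking the principle of Proposition \ref{prop:adj_leftinverse_unitiso}, as in the proof of Lemma \ref{lemma:repr_Isbell_equiv}). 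Your route buys an explicit check that the unit itself is invertible, at the cost of the bookkeeping you flag; the paper's route sidesteps that bookkeeping but must upgrade ``composites naturally isomorphic to identity'' to ``adjoint equivalence'' at the end. One small precision: the object you must choose $Q$ to fix is $\mathcal O(h_A)$ rather than $h^A$, since it is $\mathcal O(h_A)$ (only isomorphic, via Yoneda, to $h^A$) that literally appears in the composite; this is harmless because $\mathcal O(h_A)$ is h-projective.
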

\begin{proof}
Let $A \in \cat A$. Then:
\begin{align*}
\lder \mathcal O(h_A) &= \mathcal O(Q(h_A)) \\
&= \compdg{\cat A}(Q(h_A),h_-) \\
&\qis \compdg{\cat A}(h_A, h_-) \\
&\cong h^A,
\end{align*}
and analogously $\rder \Spec(h^A) \qis h_A$. The quasi-isomorphism $\compdg{\cat A}(Q(h_A),h_-) \qis \compdg{\cat A}(h_A, h_-)$ induced by $q \colon Q(h_A) \to h_A$ comes from the fact that both $Q(h_A)$ and $h_A$ are h-projective; it is actually a homotopy equivalence. Since $q$ is natural in $\hocomp{\cat A}$, we deduce that $\lder \mathcal O(h_A) \qis h^A$ and $\rder \Spec(h^A) \qis h_A$ are natural in $A \in H^0(\cat A) \hookrightarrow \dercomp{\cat A}$. So, we have natural isomorphisms
\begin{align*}
h_A & \qis \rder \Spec(\lder \mathcal O(h_A)), \\
h^A & \qis \lder \mathcal O(\rder \Spec(h^A)),
\end{align*}
and since $\qrep{\cat A}$ is the isomorphism closure of the image of $H^0(\cat A)$ in $\dercomp{\cat A}$, we conclude with the desired claim.
\end{proof}
\subsection{Duality for bimodules} \label{subsec:duality_bimod}
(Derived) Isbell duality extends to bimodules. First, let us introduce some notation:
\begin{align*}
\compdg{\cat A, \cat B} &= \compdg{\cat B \otimes \opp{\cat A}}, \nomenclature{$\compdg{\cat A, \cat B}$}{Shorthand for $\compdg{\cat B \otimes \opp{\cat A}}$} \\
\hocomp{\cat A, \cat B} &= \hocomp{\cat B \otimes \opp{\cat A}}, \nomenclature{$\hocomp{\cat A, \cat B}$}{Shorthand for $\hocomp{\cat B \otimes \opp{\cat A}}$} \\
\dercomp{\cat A, \cat B} &= \dercomp{\cat B \lotimes \opp{\cat A}}. \nomenclature{$\dercomp{\cat A, \cat B}$}{Shorthand for $\dercomp{\cat B \lotimes \opp{\cat A}}$}
\end{align*}
$\lotimes$ denotes the \emph{derived tensor product}, which is defined (up to quasi-equivalence) as:
\begin{equation}
\cat A \lotimes \cat B = \cat A^{\mathrm{hp}} \otimes \cat B \qe \cat A \otimes \cat B^{\mathrm{hp}} \qe \cat A^{\mathrm{hp}} \otimes \cat B^{\mathrm{hp}}, \nomenclature{$\cat A \lotimes \cat B$}{The derived tensor product of dg-categories $\cat A$ and $\cat B$}
\end{equation}
where $\cat A^{\mathrm{hp}}$ is a \emph{h-projective resolution} of $\cat A$ (see, for instance, \cite[Remark 2.7]{canonaco-stellari-internalhoms}).These definitions are justified by the observation that a dg-bimodule $F \in \compdg{\cat A, \cat B}$ (covariant in $\cat A$, contravariant in $\cat B$) can be seen as a dg-functor $F \colon \cat A \to \compdg{\cat B}$.

Isbell duality generalises quite directly to the following:
\begin{prop} \label{prop:cap3_duality_bimodules}
Let $\cat A, \cat B$ be dg-categories. There is a dg-adjunction
\begin{equation}
\ldual \dashv \rdual \colon \compdg{\cat A, \cat B} \leftrightarrows \opp{\compdg{\cat B, \cat A}}, \nomenclature{$\ldual \dashv \rdual$}{Duality for bimodules}
\end{equation}
where $\ldual$ and $\rdual$ are defined by
\begin{equation}
\begin{split}
\ldual(T)^A_B &= \mathcal O(T_A)_B = \compdg{\cat B}(T_A, h_B), \\
\rdual(S)^B_A &= \Spec(S^A)^B = \compdg{\opp{\cat B}}(S^A, h^B).
\end{split}
\end{equation}
\begin{proof}
We have to prove that there is a natural isomorphism of complexes:
\begin{equation}
\compdg{\cat B, \cat A}(S, \ldual(T)) \cong \compdg{\cat A, \cat B}(T, \rdual(S)).
\end{equation}
We compute:
\begin{align*}
\compdg{\cat B, \cat A}(S, \ldual(T)) & \cong \int_A \compdg{\opp{\cat B}}(S^A, \mathcal O(T_A)) \\
& \cong \int_A \compdg{\opp{\cat B}}(T_A, \Spec(S^A)) \\
& \cong \compdg{\cat A, \cat B}(T, \rdual(S)),
\end{align*}
where the second isomorphism of the chain follows from the Isbell duality isomorphism \eqref{eq:Isbelldual_naturaliso} of $\cat B$.
\end{proof}
\end{prop}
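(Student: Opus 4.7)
My plan is to reduce the two-sided statement to the one-sided Isbell duality isomorphism \eqref{eq:Isbelldual_naturaliso} by means of end calculus, handling the $A$-variable with an end and leaving the $\cat B$-variable inside. First I would rewrite the left-hand hom-complex: viewing both $S$ and $\ldual(T)$ as dg-functors $\opp{\cat A} \to \compdg{\opp{\cat B}}$ that send $A$ to the respective left $\cat B$-modules $S^A$ and $\ldual(T)^A = \mathcal O(T_A)$, Corollary \ref{coroll:nattrans_end} produces
\[
\compdg{\cat B, \cat A}(S, \ldual(T)) \cong \int_A \compdg{\opp{\cat B}}(S^A, \mathcal O(T_A)).
\]
A symmetric computation, this time regarding $T$ and $\rdual(S)$ as dg-functors $\cat A \to \compdg{\cat B}$, identifies the right-hand side with $\int_A \compdg{\cat B}(T_A, \Spec(S^A))$.

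The key step is then to apply, pointwise for each $A$, the one-sided Isbell duality isomorphism for the dg-category $\cat B$ to the integrands, producing
\[
\compdg{\opp{\cat B}}(S^A, \mathcal O(T_A)) \cong \compdg{\cat B}(T_A, \Spec(S^A)).
\]
Dg-functoriality of ends with parameters (Remark \ref{remark:cap1_ends_parameters}) will then upgrade this family of isomorphisms to an isomorphism of the two ends over $A$, which combined with the two end-rewritings above yields the claimed adjunction.

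The only subtle point I anticipate is verifying that the one-sided Isbell iso is genuinely dg-natural in the additional parameter $A$, so that it can be integrated. This should be automatic, however: \eqref{eq:Isbelldual_naturaliso} was itself established as a chain of natural isomorphisms (hom-end preservation, Fubini, one-variable Yoneda), each of which transparently carries extra parameters, so naturality in $A$ is inherited by construction. Thus the whole argument collapses to a short chain of natural isomorphisms, the substantive content having already been absorbed into the one-sided Isbell duality and the (co)end calculus of Section \ref{chapter:dgends}.
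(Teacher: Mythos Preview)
Your proposal is correct and follows essentially the same route as the paper: rewrite each hom-complex as an end over $A$ via Corollary~\ref{coroll:nattrans_end}, then apply the one-sided Isbell duality isomorphism~\eqref{eq:Isbelldual_naturaliso} of $\cat B$ inside the end. Your added discussion of naturality in the parameter $A$ is a careful spelling-out of what the paper leaves implicit.
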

By Proposition \ref{prop:cap3_existence_derivedfun}, $\ldual$ and $\rdual$ can be derived, and in the end we obtain the derived adjunction:
\begin{equation}
\begin{split}
\lder \ldual & \dashv \rder \rdual \colon \dercomp{\cat A, \cat B}  \leftrightarrows \opp{\dercomp{\cat B, \cat A}}, \\
&\lder \ldual(T) = \ldual(Q(T)), \\
&\rder \rdual(S) = \rdual(Q(S)).
\end{split}
\end{equation}

The above definitions employ h-projective resolutions of bimodules. A bimodule $T \in \compdg{\cat A, \cat B}$ induces right $\cat B$-modules $T_A$ and left $\cat A$-modules $T^B$, for all $A \in \cat A$ and $B \in \cat B$. A very useful result is that an h-projective resolution of $T$ induces componentwise h-projective resolutions of $T_A$ and $T^B$ (for all $A$ and all $B$), as explained in the following lemma:
\begin{lemma} \label{lemma:hproj_component}
Let $\cat A, \cat B$ be h-projective dg-categories. Let $T \in \compdg{\cat A, \cat B}$ be an h-projective bimodule. Then, for all $A \in \cat A$, $T_A \in \compdg{\cat B}$ is h-projective. Analogously, for all $B \in \cat B$, $T^B \in \compdg{\opp{\cat A}}$ is h-projective. In particular, if $q \colon Q(T) \to T$ is an h-projective resolution of $T$, then $q_A \colon Q(T)_A \to T_A$ and $q^B \colon Q(T)^B \to T^B$ are h-projective resolutions respectively of $T_A$ and $T^B$, for all $A \in \cat A$ and $B \in \cat B$. Without loss of generality, we may set $Q(T)_A = Q(T_A)$ and $Q(T)^B = Q(T^B)$.
\end{lemma}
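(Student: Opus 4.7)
The plan is to establish h-projectivity of $T_A$ by exhibiting a right adjoint to the ``evaluation at $A$'' dg-functor
\begin{equation*}
\Phi_A \colon \compdg{\cat A, \cat B} \to \compdg{\cat B}, \qquad S \mapsto S_A,
\end{equation*}
and then exploiting the assumed h-projectivity of $T$. Concretely, I define $R_A \colon \compdg{\cat B} \to \compdg{\cat A, \cat B}$ by the cotensor formula
\begin{equation*}
R_A(N)_{A'} = \enrich{\Hom}(\cat A(A', A), N),
\end{equation*}
with the evident bimodule structure induced by composition in $\cat A$ and the right $\cat B$-action on $N$. Using the end description of hom-complexes of bimodules (Corollary \ref{coroll:nattrans_end}), the internal hom-tensor adjunction in $\compdg{\cat B}$, and the dg-Yoneda lemma (Proposition \ref{prop:yoneda_end}), one obtains a chain of natural isomorphisms
\begin{align*}
\compdg{\cat A, \cat B}(S, R_A(N))
& \cong \int_{A'} \compdg{\cat B}\bigl(S_{A'}, \enrich{\Hom}(\cat A(A', A), N)\bigr) \\
& \cong \int_{A'} \compdg{\basering k}\bigl(\cat A(A', A), \compdg{\cat B}(S_{A'}, N)\bigr) \\
& \cong \compdg{\cat B}(S_A, N),
\end{align*}
witnessing an adjunction $\Phi_A \dashv R_A$.

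The next step --- the crux of the argument and the place where the h-projectivity hypothesis on $\cat A$ is used --- is to check that $R_A$ preserves acyclicity. Since $\cat A$ is h-projective, every hom-complex $\cat A(A', A)$ is h-projective as a complex of $\basering k$-modules, so that $\enrich{\Hom}(\cat A(A', A), -)$ sends acyclic complexes to acyclic complexes. Hence, if $N \in \compdg{\cat B}$ is acyclic, then $R_A(N)(B, A') = \enrich{\Hom}(\cat A(A', A), N(B))$ is acyclic for all $A', B$, i.e., $R_A(N)$ is an acyclic bimodule. Combining the adjunction with the h-projectivity of $T$, we get
\begin{equation*}
\compdg{\cat B}(T_A, N) \cong \compdg{\cat A, \cat B}(T, R_A(N)),
\end{equation*}
which is acyclic for every acyclic $N$; this proves $T_A \in \hproj{\cat B}$. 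The statement for $T^B \in \hproj{\opp{\cat A}}$ is then obtained by the symmetric argument, interchanging the roles of $\cat A$ and $\cat B$ and invoking h-projectivity of $\cat B$.

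The ``in particular'' assertion is now immediate: an h-projective resolution $q \colon Q(T) \to T$ is a quasi-isomorphism of bimodules, hence componentwise $q_A \colon Q(T)_A \to T_A$ and $q^B \colon Q(T)^B \to T^B$ are quasi-isomorphisms of (bi)modules for every $A$ and $B$; the first part of the lemma shows that $Q(T)_A$ and $Q(T)^B$ are h-projective, so these maps are indeed h-projective resolutions, and the choices $Q(T)_A = Q(T_A)$, $Q(T)^B = Q(T^B)$ are legitimate by uniqueness up to canonical homotopy equivalence of h-projective resolutions.
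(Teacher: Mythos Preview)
Your proof is correct. The adjunction $\Phi_A \dashv R_A$ is set up properly, the end/Yoneda manipulations go through as written, and the crucial step---that $R_A$ preserves acyclics because each $\cat A(A',A)$ is an h-projective complex of $\basering k$-modules---is exactly where the h-projectivity hypothesis on $\cat A$ enters. The symmetric argument for $T^B$ and the ``in particular'' clause are routine consequences, as you say.

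There is nothing to compare against: the paper states this lemma without proof, treating it as a standard fact (it is essentially the observation that the restriction functor along $\cat B \hookrightarrow \cat B \otimes \opp{\cat A}$, $B \mapsto (B,A)$, is left Quillen when the hom-complexes of $\cat A$ are cofibrant). Your adjunction argument is a clean and self-contained way to supply the missing details.
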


The adjunction $\ldual \dashv \rdual$ and its derived version $\lder \ldual \dashv \rder \rdual$ are strictly related to the (derived) Isbell duality adjunction. indeed, we have the following:
\begin{lemma} \label{lemma:bimodules_duality_units_Isbell}
Let $\cat A, \cat B$ be dg-categories. Let $T \in \compdg{\cat A, \cat B}$ and $S \in \compdg{\cat B, \cat A}$. Let $\eta \colon T \to \rdual \ldual(T)$ and $\varepsilon \colon S \to \ldual \rdual(S)$\footnote{We view the counit as a map in $\compdg{\cat B, \cat A}$: this explains the seemingly ``wrong direction'' of the arrow.} be the unit and counit morphisms of the adjunction $\ldual \vdash \rdual$, calculated in $T$ and $S$. Then, for all $A \in \cat A$, the morphisms $\eta_A \colon T_A \to \rdual \ldual(T)_A$ and $\varepsilon_A \colon S^A \to \ldual \rdual(S)^A$ are the unit and counit maps of the Isbell duality of $\cat B$, calculated in $T_A$ and $S^A$.
\end{lemma}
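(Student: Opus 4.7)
The plan is to unwind the definition of the unit $\eta$ of the adjunction $\ldual \dashv \rdual$ by tracing it through the explicit chain of isomorphisms that produced this adjunction, and observe that the only non-formal step in that chain is the Isbell duality isomorphism of $\cat B$ applied pointwise in $A$. The statement for $\varepsilon$ is then dual.

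Recall that the unit $\eta_T \colon T \to \rdual\ldual(T)$ is, by definition, the image of $1_{\ldual(T)}$ under the adjunction bijection
\begin{equation*}
\compdg{\cat B, \cat A}(\ldual(T), \ldual(T)) \isorightarrow \compdg{\cat A, \cat B}(T, \rdual\ldual(T)).
\end{equation*}
By the proof of Proposition \ref{prop:cap3_duality_bimodules}, this bijection is the composition
\begin{align*}
\compdg{\cat B, \cat A}(\ldual(T), \ldual(T)) &\cong \int_A \compdg{\opp{\cat B}}(\ldual(T)^A, \mathcal O(T_A)) \\
&\cong \int_A \compdg{\opp{\cat B}}(T_A, \Spec(\ldual(T)^A)) \\
&\cong \compdg{\cat A, \cat B}(T, \rdual\ldual(T)),
\end{align*}
where the outer two isomorphisms come from Corollary \ref{coroll:nattrans_end} (natural transformations written as ends, with projection maps given by taking $A$-components) and the middle one is the Isbell duality isomorphism \eqref{eq:Isbelldual_naturaliso} of $\cat B$ applied at each $A \in \cat A$.

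Now follow the element $1_{\ldual(T)}$ along this chain. Its image under the first isomorphism is the family $(1_{\mathcal O(T_A)})_A = (1_{\ldual(T)^A})_A$, because that first isomorphism is exactly ``take $A$-components''. The second isomorphism is the pointwise Isbell bijection, and by definition of the Isbell duality unit it sends $1_{\mathcal O(T_A)} = 1_{\ldual(T)^A}$ to the Isbell unit $\eta^{\mathrm{Is}}_{T_A} \colon T_A \to \Spec \mathcal O(T_A) = \Spec(\ldual(T)^A)$. Finally, the third isomorphism is again ``assemble a family of components into a natural transformation'', so the resulting morphism $\eta_T \in \compdg{\cat A, \cat B}(T, \rdual\ldual(T))$ has $A$-component precisely $\eta^{\mathrm{Is}}_{T_A}$, which is what we wanted.

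The argument for $\varepsilon$ is formally the same: the counit in $S$ is the image of $1_{\rdual(S)}$ under $\compdg{\cat A, \cat B}(\rdual(S), \rdual(S)) \cong \compdg{\cat B, \cat A}(\ldual\rdual(S), S)$, and the analogous chain identifies its $A$-component with the Isbell counit $\varepsilon^{\mathrm{Is}}_{S^A} \colon \mathcal O \Spec(S^A) \to S^A$. The only real work is bookkeeping through the end-based reformulation of the hom-complexes and checking that taking the $A$-component of a natural transformation really is the map induced by the projection $\varepsilon_A$ from the end; this is immediate from Corollary \ref{coroll:nattrans_end}, so there is no serious obstacle beyond diagram-chasing.
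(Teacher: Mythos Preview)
Your proposal is correct and takes essentially the same approach as the paper. The paper's proof packages the same observation as a commutative square---the adjunction isomorphism between the two ends sits over the pointwise Isbell adjunction via the end projection maps $\varepsilon_A$---whereas you trace the element $1_{\ldual(T)}$ explicitly through the chain; these are the same argument, and the commutative square is precisely what justifies your element-chase.
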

\begin{proof}
We rewrite the adjunction $\ldual \dashv \rdual$ as follows:
\begin{equation*}
\int_A \compdg{\opp{\cat B}}(S^A, \ldual(T)^A) \isorightarrow \int_A \compdg{\cat B}(T_A, \rdual(S)_A).
\end{equation*}
By definition, $\ldual(T)^A = \mathcal O(T_A)$, $\rdual(S)_A = \Spec(S^A)$, and there is a commutative diagram for all $A \in \cat A$:
\begin{equation*}
\xymatrix{
\displaystyle{\int_A \compdg{\opp{\cat B}}(S^A, \ldual(T)^A)} \ar[r]^{\sim} \ar[d] & \displaystyle{\int_A \compdg{\cat B}(T_A, \rdual(S)_A)} \ar[d] \\
\compdg{\opp{\cat B}}(S^A, \ldual(T)^A) \ar[r]^{\sim} & \compdg{\cat B}(T_A, \rdual(S)_A).
}
\end{equation*}
The vertical arrows are the natural maps associated to the written ends; the ``downstairs'' isomorphism is precisely the Isbell duality adjunction of $\cat B$, and our claim immediately follows.
\end{proof}
The above result immediately extends to the homotopy level adjunction $H^0(\ldual) \dashv H^0(\rdual)$, and also to the derived adjunction $\lder \ldual \dashv \rder \rdual$:
\begin{coroll} \label{coroll:derived_bimodules_duality_units_Isbell}
Let $\cat A, \cat B$ be dg-categories, and let $T \in \dercomp{\cat A, \cat B}, S \in \dercomp{\cat B, \cat A}$. Let $\widetilde{\eta} \colon T \to \rder \rdual(\lder \ldual(T))$ and $\widetilde{\varepsilon} \colon S \to \lder \ldual (\rder \rdual(S))$ be the unit and counit morphisms of the derived adjunction $\lder \ldual \dashv \rder \rdual$, calculated in $T$ and $S$. Then, for all $A \in \cat A$, the morphisms $\widetilde{\eta}_A \colon T_A \to \rder \rdual(\lder \ldual(T))_A$ and $\widetilde{\varepsilon}_A \colon S^A \to \lder \ldual (\rder \rdual(S))^A$ are the unit and counit morphisms of the derived Isbell duality of $\cat B$, calculated in $T_A$ and $S^A$.
\end{coroll}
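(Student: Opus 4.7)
The plan is to unwind the definition of the derived unit and counit using Proposition \ref{prop:der_adj} (adapted to the contravariant setting), evaluate componentwise via Lemma \ref{lemma:hproj_component}, and then match the outcome against the undervied statement of Lemma \ref{lemma:bimodules_duality_units_Isbell}.

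First, the derived adjunction $\lder \ldual \dashv \rder \rdual$ is obtained, exactly as in Proposition \ref{prop:der_adj} but with h-projective resolutions on both sides (because $\rdual$ is contravariant), by composing $Q \dashv \delta$, the undervied adjunction $\ldual \dashv \rdual$ at the homotopy level, and again $Q \dashv \delta$ on the target side. Running $\mathrm{id}_{\lder \ldual T}$ backwards through this chain of isomorphisms yields a representative of $\widetilde{\eta}_T$ in $\dercomp{\cat A, \cat B}$ of the form
\[
T \xleftarrow{q_T} Q(T) \xrightarrow{\eta_{Q(T)}} \rdual \ldual(Q(T)) \xrightarrow{\rdual(q')} \rdual(Q(\ldual Q(T))) = \rder \rdual (\lder \ldual T),
\]
where $q_T \colon Q(T) \to T$ and $q' \colon Q(\ldual Q(T)) \to \ldual Q(T)$ are the relevant h-projective resolutions, and $\eta$ is the undervied unit of $\ldual \dashv \rdual$.

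Next I evaluate this zig-zag at $A \in \cat A$. Assuming $\cat A$ and $\cat B$ are h-projective (which is harmless since derived categories are invariant under quasi-equivalence), Lemma \ref{lemma:hproj_component} allows me to take $Q(T)_A = Q(T_A)$, so that $(\lder \ldual T)_A = \mathcal O(Q(T_A))$; the same lemma ensures that $Q(\ldual Q(T))^A$ is an h-projective resolution of $\mathcal O(Q(T_A))$ in $\compdg{\opp{\cat B}}$, so that $(\rder \rdual \lder \ldual T)_A = \Spec(Q(\mathcal O(Q(T_A))))$. By Lemma \ref{lemma:bimodules_duality_units_Isbell}, the undervied unit $\eta_{Q(T)}$ evaluated at $A$ is the undervied Isbell unit at $Q(T_A)$. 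Consequently the zig-zag above, evaluated at $A$, becomes
\[
T_A \xleftarrow{q_{T_A}} Q(T_A) \xrightarrow{\eta^{\mathrm{Isb}}_{Q(T_A)}} \Spec \mathcal O(Q(T_A)) \xrightarrow{\Spec(q')} \Spec(Q(\mathcal O(Q(T_A)))),
\]
which is precisely the representative of the derived Isbell unit $\widetilde{\eta}^{\mathrm{Isb}}_{T_A}$ produced by the same recipe applied to the Isbell adjunction $\mathcal O \dashv \Spec$ of $\cat B$.

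The counit statement follows by the symmetric argument: $\widetilde{\varepsilon}_S$ is represented by an analogous zig-zag through $Q(S)$ and an h-projective resolution of $\rdual Q(S)$, whose evaluation at $A$ collapses onto the derived Isbell counit at $S^A$ via the counit part of Lemma \ref{lemma:bimodules_duality_units_Isbell}. The main obstacle in this plan is not conceptual but careful bookkeeping: one has to keep track of the opposite categories and of the contravariance of $\rdual$ to check that the resolutions defining $\rder \rdual$ are h-projective (and not h-injective), which is exactly what makes Lemma \ref{lemma:hproj_component} applicable on both sides to identify the componentwise resolutions.
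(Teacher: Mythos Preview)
Your proposal is correct and follows essentially the same route as the paper: assume $\cat A,\cat B$ h-projective, unwind the derived adjunction as the composite of $Q\dashv\delta$, the underived $\ldual\dashv\rdual$, and $Q\dashv\delta$ again (contravariance forcing h-projective resolutions on both sides), then evaluate at $A$ using Lemma~\ref{lemma:hproj_component} and Lemma~\ref{lemma:bimodules_duality_units_Isbell}. The paper phrases the compatibility of $(-)_A$ with the $Q\dashv\delta$ adjunction as a commutative square rather than by writing out the zig-zag explicitly, but the content is the same.
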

\begin{proof}
For simplicity, assume that $\cat A$ and $\cat B$ are h-projective, identifying them with their h-projective resolutions. Let $A \in \cat A$. There is an obvious dg-functor
\begin{align*}
(-)_A \colon \compdg{\cat A, \cat B} &\to \compdg{\cat B}, \\
T & \mapsto T_A.
\end{align*}
This dg-functor clearly preserves acyclic modules, hence it induces a functor
\begin{equation*}
(-)_A \colon \dercomp{\cat A, \cat B} \to \dercomp{\cat B}.
\end{equation*}
Recall that, by Lemma \ref{lemma:hproj_component}, if $q \colon Q(T) \to T$ is an h-projective resolution of $T$, then $q_A \colon Q(T)_A = Q(T_A) \to T_A$ is an h-projective resolution of $T_A$. The functor $Q$ is left adjoint to the localisation functor; recalling how this adjunction is obtained (formula \eqref{eq:resolutions_adjunction_def}), we see that the diagram
\begin{equation*}
\xymatrix{
\hocomp{\cat A, \cat B}(Q(T),T') \ar[r]^-{\sim} \ar[d]^{(-)_A} & \dercomp{\cat A, \cat B}(T,T') \ar[d]^{(-)_A} \\
\hocomp{\cat B}(Q(T_A), T'_A) \ar[r]^-{\sim} & \dercomp{\cat B}(T_A, T'_A).
}
\end{equation*}
is commutative. This, combined with the above lemma and with the definition of the adjunction $\lder \ldual \dashv \rder \rdual$ as a composition of adjunctions (Proposition \ref{prop:der_adj}), gives us the claim regarding the unit $\widetilde{\eta}$. A similar argument gives the other part of the statement.
\end{proof}
Now, let $T \in \compdg{\cat A, \cat B}$ be a \emph{right representable} bimodule, that is, for all $A \in \cat A$, $T_A \cong h_{F(A)}$ for some $F(A) \in \cat B$. Then, we have that
\begin{align*}
\ldual (T)^A &= \mathcal O(T_A) \\
&\cong \mathcal O(h_{F(A)}) \\
&\cong h^{F(A)}.
\end{align*}
So, $\ldual(T)$ is \emph{left representable}. Analogously, if $S \in \compdg{\cat B, \cat A}$ is left representable, that is, $S^A \cong h^{G(A)}$ for all $A \in \cat A$, then $\rdual(S)$ is right representable, and in particular $\rdual(S)_A \cong h_{G(A)}$ for all $A$. So, the duality $\ldual \dashv \rdual$ sends right representables to left representables, and vice-versa. A similar observation can be done at the homotopy level: call a bimodule $T \in \compdg{\cat A, \cat B}$ \emph{right homotopy representable} if $T_A \approx h_{F(A)}$ for some $F(A) \in \cat B$, for alla $A \in \cat A$. Then, a similar computation as above shows that $\ldual(T)^A \approx h^{F(A)}$, so that $\ldual(T)$ is \emph{left homotopy representable}. Vice-versa, if $S \in \compdg{\cat B, \cat A}$ is left homotopy representable, then $\rdual(S)$ is right homotopy representable. More precisely, we have the following:
\begin{lemma}
Let $\cat A, \cat B$ be dg-categories. The dg-adjunction $\ldual \dashv \rdual$ restricts to an adjoint dg-equivalence
\begin{equation*}
\rrep{\cat A, \cat B} \leftrightarrows \opp{\lrep{\cat B, \cat A}},
\end{equation*}
where $\rrep{\cat A, \cat B}$ \nomenclature{$\rrep{\cat A, \cat B}$}{The dg-category of right representable $\cat A$-$\cat B$-dg-bimodules} is the full dg-subcategory of right representable bimodules in $\compdg{\cat A, \cat B}$, and $\lrep{\cat B, \cat A}$ \nomenclature{$\lrep{\cat A, \cat B}$}{The dg-category of left representable $\cat A$-$\cat B$-dg-bimodules} is the full dg-subcategory of left representable bimodules in $\compdg{\cat B, \cat A}$.

Analogously, the homotopy adjunction $H^0(\ldual) \dashv H^0(\rdual)$ restricts to an adjoint equivalence
\begin{equation*}
\rhorep{\cat A, \cat B} \leftrightarrows \opp{\lhorep{\cat B, \cat A}},
\end{equation*}
where $\rhorep{\cat A, \cat B}$ \nomenclature{$\rhorep{\cat A, \cat B}$}{The category of right homotopy representable $\cat A$-$\cat B$-dg-bimodules} and $\lhorep{\cat B, \cat A}$ \nomenclature{$\lhorep{\cat A, \cat B}$}{The category of left homotopy representable $\cat A$-$\cat B$-dg-bimodules} denote respectively the full subcategories of $\hocomp{\cat A, \cat B}$ and $\hocomp{\cat B, \cat A}$ of \emph{right (or left) homotopy representable bimodules}.
\end{lemma}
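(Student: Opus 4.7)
The plan is to reduce the statement to the ``single-variable'' Isbell duality equivalence of Lemma \ref{lemma:repr_Isbell_equiv} via the componentwise description of the unit and counit of $\ldual \dashv \rdual$ supplied by Lemma \ref{lemma:bimodules_duality_units_Isbell}. First I would verify that $\ldual$ and $\rdual$ restrict to functors between $\rrep{\cat A, \cat B}$ and $\opp{\lrep{\cat B, \cat A}}$: this is precisely the computation recorded in the paragraph preceding the statement, namely that if $T_A \cong h_{F(A)}$ then $\ldual(T)^A = \mathcal O(T_A) \cong h^{F(A)}$, and dually for $\rdual$. Hence the adjunction restricts to an adjunction between the two full dg-subcategories.

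Next, I would show that the restricted unit and counit are dg-isomorphisms. For any $T \in \rrep{\cat A, \cat B}$ and any $A \in \cat A$, Lemma \ref{lemma:bimodules_duality_units_Isbell} identifies the component $\eta_A \colon T_A \to \rdual \ldual(T)_A$ of the unit with the Isbell unit of $\cat B$ evaluated at the representable right $\cat B$-module $T_A$, which is a dg-isomorphism by Lemma \ref{lemma:repr_Isbell_equiv}. Since a morphism of $\cat A$-$\cat B$-bimodules is an isomorphism precisely when each component $\eta_A$ is an isomorphism of right $\cat B$-modules, the unit $\eta$ itself is a dg-isomorphism. A symmetric argument on the left-representable side shows that the counit is a dg-isomorphism at every $S \in \lrep{\cat B, \cat A}$, and invoking Proposition \ref{prop:adj_leftinverse_unitiso} then yields the desired adjoint dg-equivalence.

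The homotopy-level statement is obtained by running the same argument after applying $H^0$ throughout, using the second half of Lemma \ref{lemma:repr_Isbell_equiv}: homotopy representability of each component $T_A$ makes the Isbell unit a homotopy equivalence of right $\cat B$-modules, and these assemble into a homotopy equivalence of bimodules because homotopy equivalences are again detected componentwise in $A \in \cat A$. I do not anticipate any serious obstacle; the argument is essentially formal once Lemma \ref{lemma:bimodules_duality_units_Isbell} is in hand, and the only real point to check carefully is the componentwise detection of isomorphisms and of homotopy equivalences of bimodules.
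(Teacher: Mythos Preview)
Your proof is correct and follows the same approach as the paper: reduce to componentwise statements via Lemma \ref{lemma:bimodules_duality_units_Isbell}, then invoke Lemma \ref{lemma:repr_Isbell_equiv} to conclude that each component of the unit (and counit) is an isomorphism or homotopy equivalence. The only superfluous step is the appeal to Proposition \ref{prop:adj_leftinverse_unitiso}: once you have shown directly that the unit and counit are isomorphisms, the adjoint equivalence is immediate.
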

\begin{proof}
This is a direct application of Lemma \ref{lemma:bimodules_duality_units_Isbell}. For instance, to show that the unit $\eta \colon T \to \rdual \ldual(T)$ is an isomorphism when $T \in \rrep{\cat A, \cat B}$, or a homotopy equivalence when $T \in \rhorep{\cat A, \cat B}$, it is sufficient to show that the components $\eta_A \colon T_A \to \rdual \ldual(T)_A$ are such for all $A \in \cat A$. But by hypothesis $T_A \in \rep{\cat B}$ (or $\horep{\cat B}$ in the case of homotopy right representability), so by Lemma \ref{lemma:repr_Isbell_equiv} we are done.
\end{proof}
A similar result as above holds for the derived duality $\lder \ldual \dashv \rder \rdual$. Call a bimodule $T \in \compdg{\cat A, \cat B}$ \emph{right quasi-representable} if $T_A \qis h_{F(A)}$ for some $F(A) \in \cat B$, for all $A \in \cat A$; analogously, a bimodule $S \in \compdg{\cat B, \cat A}$ is called \emph{left quasi-representable} if $S^A \qis h^{G(A)}$ for some $G(A) \in \cat B$, for all $A \in \cat A$. We have the following:
\begin{prop} \label{prop:lqrep_rqrep}
Let $\cat A, \cat B$ be dg-categories. The derived adjunction $\lder \ldual \dashv \rder \rdual$ restricts to an adjoint equivalence
\begin{equation*}
\rqrep{\cat A, \cat B} \leftrightarrows \opp{\lqrep{\cat B, \cat A}},
\end{equation*}
where $\rqrep{\cat A, \cat B}$ \nomenclature{$\rqrep{\cat A, \cat B}$}{The category of right quasi-representable $\cat A$-$\cat B$-dg-bimodules} is the full subcategory of $\dercomp{\cat A, \cat B}$ of right quasi-representable bimodules, and $\lqrep{\cat B, \cat A}$ \nomenclature{$\lqrep{\cat A, \cat B}$}{The category of left quasi-representable $\cat A$-$\cat B$-dg-bimodules} is the full subcategory of $\dercomp{\cat B, \cat A}$ of left quasi-representable bimodules.
\end{prop}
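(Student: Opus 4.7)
The plan is to reduce this proposition, which concerns bimodules, to Proposition \ref{prop:derived_repr_Isbell_equiv}, which concerns one-sided modules, by exploiting the componentwise description of the derived bimodule duality given by Corollary \ref{coroll:derived_bimodules_duality_units_Isbell}. The preceding non-derived lemma (on $\rrep$/$\lrep$ and $\rhorep$/$\lhorep$) already used precisely this reduction strategy at the strict and homotopy levels; here we simply carry it out one level further.

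First I would check that $\lder \ldual$ sends $\rqrep{\cat A, \cat B}$ into $\opp{\lqrep{\cat B, \cat A}}$, and dually for $\rder \rdual$. Let $T \in \rqrep{\cat A, \cat B}$, so that $T_A \qis h_{F(A)}$ for each $A \in \cat A$. Using Lemma \ref{lemma:hproj_component} we may assume $Q(T)_A = Q(T_A)$, hence
\begin{equation*}
\lder \ldual(T)^A = \ldual(Q(T))^A = \mathcal O(Q(T_A)) = \lder \mathcal O(T_A) \qis \lder \mathcal O(h_{F(A)}) \qis h^{F(A)},
\end{equation*}
where the final quasi-isomorphism is by Proposition \ref{prop:derived_repr_Isbell_equiv}. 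Thus $\lder \ldual(T) \in \lqrep{\cat B, \cat A}$. The symmetric computation shows $\rder \rdual$ sends $\lqrep{\cat B, \cat A}$ into $\rqrep{\cat A, \cat B}$.

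Next I would show that the unit $\widetilde{\eta} \colon T \to \rder \rdual \lder \ldual(T)$ of the derived adjunction is an isomorphism in $\dercomp{\cat A, \cat B}$ whenever $T \in \rqrep{\cat A, \cat B}$, and dually for the counit $\widetilde{\varepsilon}$. A morphism in $\dercomp{\cat A, \cat B}$ is an isomorphism if and only if it is a quasi-isomorphism of bimodules, which by definition means it is objectwise a quasi-isomorphism of complexes; equivalently, each $A$-component must be a quasi-isomorphism of right $\cat B$-dg-modules. By Corollary \ref{coroll:derived_bimodules_duality_units_Isbell}, $\widetilde{\eta}_A$ is precisely the unit of the derived Isbell duality of $\cat B$ evaluated at $T_A \in \qrep{\cat B}$, and Proposition \ref{prop:derived_repr_Isbell_equiv} says this is an isomorphism in $\dercomp{\cat B}$. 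The argument for the counit is identical, using $S^A \in \qrep{\opp{\cat B}}$.

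The main obstacle is essentially bookkeeping: one must be sure that ``componentwise iso in the respective derived categories of one-sided modules'' genuinely detects isomorphisms in $\dercomp{\cat A, \cat B}$. This is where Corollary \ref{coroll:derived_bimodules_duality_units_Isbell} does the heavy lifting, by identifying the components of the derived bimodule unit/counit with the derived Isbell unit/counit. Once this identification is in hand, the proposition is immediate from Proposition \ref{prop:derived_repr_Isbell_equiv}, and no further computation with resolutions or ends is required.
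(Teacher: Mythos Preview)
Your proof is correct and follows essentially the same approach as the paper: reduce to Proposition~\ref{prop:derived_repr_Isbell_equiv} via Corollary~\ref{coroll:derived_bimodules_duality_units_Isbell}, checking that the unit and counit of $\lder\ldual \dashv \rder\rdual$ are componentwise isomorphisms. You are slightly more explicit than the paper in first verifying that $\lder\ldual$ and $\rder\rdual$ land in the correct subcategories, but the core argument is identical.
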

\begin{proof}
This is an application of Corollary \ref{coroll:derived_bimodules_duality_units_Isbell}. For instance, to show that the unit $\widetilde{\eta} \colon T \to \rder \rdual(\lder \ldual(T))$ is an isomorphism in $\dercomp{\cat A, \cat B}$, it is sufficient to show that $\widetilde{\eta}_A$ is an isomorphism in $\dercomp{\cat B}$ for all $A$. This follows directly by Proposition \ref{prop:derived_repr_Isbell_equiv}, since by hypothesis $T_A \in \qrep{\cat B}$ for all $A \in \cat A$.
\end{proof}
\section{The bicategory of bimodules; adjoints} \label{section:bicat_bimod_adj}
An interesting feature of bimodules is that they can be viewed as ``generalised functors''. We will sometimes write $F \colon \cat A \profto \cat B$ \nomenclature{$F \colon \cat A \profto \cat B$}{Notation which means that $F$ is a $\cat A$-$\cat B$-dg-bimodule, or $F \in \compdg{\cat A, \cat B}$} meaning $F \in \compdg{\cat A, \cat B}$. Given bimodules $F \colon \cat A \profto \cat B$ and $G \colon \cat B \profto \cat C$, we can define their composition $G \diamond F \colon \cat A \profto \cat C$, as follows:
\begin{equation}
(G \diamond F)_A^C  = \int^B F^B_A \otimes G^C_B. \nomenclature{$G \diamond F$}{Composition of dg-bimodules}
\end{equation}
Applying the dg-functoriality of coends, we find out that $\diamond$ is dg-functorial in both variables, hence giving rise to dg-bifunctors
\begin{equation}
- \diamond - \colon \compdg{\cat B, \cat C} \otimes \compdg{\cat A, \cat B} \to \compdg{\cat A, \cat C}.
\end{equation}
In particular, if $\varphi \colon F \to F'$ and $\psi \colon G \to G'$ are dg-natural transformations, we have dg-natural transformations $\psi \diamond F \colon G \diamond F  \to G' \diamond F$ and $G \diamond \varphi \colon G \diamond F  \to G \diamond F'$.

By co-Yoneda lemma, the diagonal bimodules act as (weak) units for this composition:
\begin{equation}
\begin{split}
F \diamond h_{\cat A} &= \int^A h^A \otimes F_A \cong F, \\
h_{\cat B} \diamond F &= \int^B F^B \otimes h_B \cong F,
\end{split}
\end{equation}
given $F \colon \cat A \profto \cat B$. Moreover, the composition is weakly associative. indeed, given $F \colon \cat A \to \cat B, G \colon \cat B \to \cat C, H \colon \cat C \to \cat D$, we have:
\begin{align*}
H \diamond (G \diamond F) &= \int^C (G \diamond F)^C \otimes H_C \\
&= \int^C \left(\int^B F^B \otimes G^C_B \right) \otimes H_C \\
&\cong \int^B \int^C F^B \otimes (G^C_B \otimes H_C) \\
&\cong \int^B F^B \otimes \left(\int^C G^C_B \otimes H_C \right) \\
&= \int^B F^B \otimes (H \diamond G)_B \\
& = (H \diamond G) \diamond F,
\end{align*}
where we used Fubini's theorem and the fact that the tensor product preserves coends (exercise).

Another interesting property of the composition $\diamond$ is that it preserves h-projective bimodules:
\begin{lemma} \label{lemma:diamondprod_hproj}
Let $\cat A, \cat B, \cat C$ be h-projective dg-categories. Let $F \colon \cat A \profto \cat B$ and $G \colon \cat B \profto \cat C$ be h-projective bimodules. Then, $G \diamond F$ is h-projective.
\end{lemma}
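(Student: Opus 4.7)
The plan is to verify h-projectivity of $G \diamond F$ directly from the definition: show that for every acyclic $\cat A$-$\cat C$-bimodule $N$, the complex $\compdg{\cat A, \cat C}(G \diamond F, N)$ is acyclic. The strategy is to use (co)end calculus to rewrite this hom-complex as a hom-complex involving $G$ and a certain auxiliary bimodule built from $F$ and $N$, then apply h-projectivity of $F$ componentwise to conclude that this auxiliary bimodule is acyclic, and finally use h-projectivity of $G$.

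Concretely, I would start with
\[ \compdg{\cat A, \cat C}(G \diamond F, N) \cong \int_{A,C} \compdg{\basering k}((G \diamond F)^C_A, N^C_A), \]
unfold the definition $(G \diamond F)^C_A = \int^B F^B_A \otimes G^C_B$, and apply Proposition \ref{prop:cap2_hom_preserves_ends} (hom sends coends to ends) together with the internal tensor-hom adjunction in $\compdg{\basering k}$. A use of Fubini (Proposition \ref{prop:Fubini}) to swap the order of integration, followed by another application of hom-preserves-ends, should identify the above with $\compdg{\cat B, \cat C}(G, M)$, where $M \in \compdg{\cat B, \cat C}$ is the bimodule
\[ M^C_B = \int_A \compdg{\basering k}(F^B_A, N^C_A) \cong \compdg{\opp{\cat A}}(F^B, N^C), \]
the second identification being Corollary \ref{coroll:nattrans_end}.

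The point is then that $M$ is acyclic. For fixed $B$ and $C$, the left $\cat A$-module $N^C \in \compdg{\opp{\cat A}}$ is acyclic because $N$ is acyclic by hypothesis. On the other hand, since $\cat A$ and $\cat B$ are h-projective and $F$ is an h-projective $\cat A$-$\cat B$-bimodule, Lemma \ref{lemma:hproj_component} guarantees that $F^B \in \compdg{\opp{\cat A}}$ is h-projective for every $B$. Hence each component $M^C_B = \compdg{\opp{\cat A}}(F^B, N^C)$ is acyclic, so $M$ is an acyclic $\cat B$-$\cat C$-bimodule. Since $G$ is h-projective, $\compdg{\cat B, \cat C}(G, M)$ is then acyclic, and consequently so is $\compdg{\cat A, \cat C}(G \diamond F, N)$.

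The most delicate step is really the initial (co)end manipulation: one has to make sure that the repeated interchange of hom, ends, and coends is justified (each step relying on Proposition \ref{prop:cap2_hom_preserves_ends} or Proposition \ref{prop:Fubini}), and that the identifications of ends of hom-complexes with hom-complexes in categories of dg-modules/bimodules are the correct ones. Once this chain of natural isomorphisms is in place, the conclusion is a clean two-step application of h-projectivity — first of $F$ to produce an acyclic $M$, then of $G$ to finish — with the hypothesis that $\cat A$ and $\cat B$ are h-projective entering precisely through Lemma \ref{lemma:hproj_component}.
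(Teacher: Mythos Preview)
Your proposal is correct and follows essentially the same strategy as the paper's proof: both compute $\compdg{\cat A,\cat C}(G\diamond F,N)$ via (co)end calculus and the tensor--hom adjunction, produce an acyclic auxiliary bimodule using Lemma~\ref{lemma:hproj_component} on components, and conclude by h-projectivity of the remaining bimodule. The only difference is a harmless symmetry: the paper pulls out $F$ first and obtains $\compdg{\cat A,\cat B}(F,\compdg{\cat C}(G_-,N_-))$ (using that each $G_B$ is h-projective, hence needing $\cat B,\cat C$ h-projective), while you pull out $G$ first and obtain $\compdg{\cat B,\cat C}(G,\compdg{\opp{\cat A}}(F^-,N^-))$ (using that each $F^B$ is h-projective, hence needing $\cat A,\cat B$ h-projective).
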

\begin{proof}
Let $N \in \compdg{\cat A, \cat C}$ be acyclic. We compute:
\begin{align*}
\compdg{\cat A, \cat C}(G \diamond F, N) &= \int_{A,C} \compdg{\basering k}((G \diamond F)^C_A, N^C_A) \\
& =\int_{A,C} \compdg{\basering k}\left(\int^B F^B_A \otimes G^C_B, N^C_A\right) \\
& \cong \int_{A,B,C} \compdg{\basering k}(F^B_A \otimes G_B^C, N_A^C) \\
& \cong \int_{A,B,C} \compdg{\basering k}(F^B_A, \compdg{\basering k}(G^C_B, N^C_A)) \\
& \cong \int_{A,B} \compdg{\basering k}(F^B_A, \compdg{\cat C}(G_B, N_A) \\
& \cong \compdg{\cat A, \cat B}(F, \compdg{\cat C}(G_-, N_-)).
\end{align*}
Now, $(A,B) \mapsto \compdg{\cat C}(G_B, N_A)$ is acyclic, indeed:
\begin{align*}
H^i(\compdg{\cat C}(G_B, N_A)) &= H^0(\compdg{\cat C}(G_B, N_A[i]) \\
&= \hocomp{\cat C}(G_B, N_A[i]),
\end{align*}
and $G_B$ is h-projective by Lemma \ref{lemma:hproj_component}, so $\hocomp{\cat C}(G_B, N_A[i]) \cong 0$. Hence, since $F$ is h-projective, $\compdg{\cat A, \cat B}(F, \compdg{\cat C}(G_-, N_-))$ is acyclic, and we are done.
\end{proof}

There is a derived version of the composition $\diamond$. Namely, given $F \in \dercomp{\cat A, \cat B}$ and $G \in \dercomp{\cat B, \cat C}$, we set
\begin{equation}
G \ldiamond F = Q(G) \diamond Q(F) \qis G \diamond Q(F) \qis Q(F) \diamond G
\end{equation}
taking h-projective resolutions of either $F$ or $G$. The composition $\ldiamond$ is defined up to quasi-isomorphism, and it is functorial, as we can expect, in the sense that it gives bifunctors
\begin{equation}
- \ldiamond - \colon \dercomp{\cat B, \cat C} \otimes \dercomp{\cat A, \cat B} \to \dercomp{\cat A, \cat C}.
\end{equation}
By the above Lemma \ref{lemma:diamondprod_hproj}, $Q(G) \diamond Q(F)$ is always h-projective, so we may prove directly that $\ldiamond$ is weakly associative and unital. indeed:
\begin{align*}
H \ldiamond (G \ldiamond F) &= Q(H) \diamond (G \ldiamond F) \\
&= Q(H) \diamond (Q(G) \diamond Q(F)) \\
& \cong (Q(H) \diamond Q(G)) \diamond Q(F) \\
&= (H \ldiamond G) \diamond Q(F) \\
&= (H \ldiamond G) \ldiamond F,
\end{align*}
and
\begin{align*}
F \ldiamond h_{\cat A} &= Q(F) \diamond h_{\cat A} \cong Q(F) \qis F, \\
h_{\cat B} \ldiamond F &= h_{\cat B} \diamond Q(F) \cong Q(F) \qis F.
\end{align*}
The composition $\diamond$ and its derived version $\ldiamond$ are indeed part of \emph{bicategorical} structures. Namely, we have the (dg-)bicategory $\kat{Bimod}$ \nomenclature{$\kat{Bimod}$}{The (dg-)bicategory of dg-categories and bimodules} whose objects are dg-categories, with $1$-morphisms and $2$-morphisms respectively given by the objects and the morphisms of the dg-categories $\compdg{\cat A, \cat B}$; in the derived setting, we have the bicategory $\kat{DBimod}$ \nomenclature{$\kat{DBimod}$}{The bicategory of dg-categories and derived bimodules} whose objects are dg-categories, with $1$-morphisms and $2$-morphisms given respectively by the objects and the morphisms of the categories $\dercomp{\cat A, \cat B}$. We won't study these structures in full detail; however, it is interesting to explore the notion of adjointness and its relation to (quasi)-representability.
\begin{defin}
Let $F \colon \cat A \profto \cat B$ and $G \colon \cat B \profto \cat A$ be $1$-morphisms in $\kat{Bimod}$. We say that $F \dashv G$ ($F$ is left adjoint to $G$) if there exist (closed, degree $0$) $2$-morphisms $\eta \colon h_{\cat A} \to G \diamond F$ and $\varepsilon \colon F \diamond G \to h_{\cat B}$ such that the following triangular identities are satisfied:
\begin{align*}
& (F \cong F \diamond h_{\cat A} \xrightarrow{F \diamond \eta} F \diamond (G \diamond F) \cong (F \diamond G) \diamond F \xrightarrow{\varepsilon \diamond F} h_{\cat B} \diamond F \cong F) = 1_F, \\
& (G \cong h_{\cat A} \diamond G \xrightarrow{\eta \diamond G} (G \diamond F) \diamond G \cong G \diamond (F \diamond G) \xrightarrow{G \diamond \varepsilon} G \diamond h_{\cat B} \cong G) = 1_G.
\end{align*}

The definition of adjoint $1$-morphisms in $\kat{DBimod}$ is analogous (replace $\diamond$ with the derived composition $\ldiamond$).
\end{defin}
Given $T \in \compdg{\cat A, \cat B}$, we could expect that its dual $\ldual(T)$ (or $\rdual(T)$) were adjoint to $T$. This is not true in general, but it is provable under the right (or the left) representability assumption. First, notice that there is a (closed, degree $0$) morphism in $\compdg{\cat A, \cat A}$, dg-natural in $T$:
\begin{equation} \label{eq:morph_quasiunit}
n \colon \ldual(T) \diamond T \to \compdg{\cat B}(T_-, T_-).
\end{equation}
Indeed, write:
\begin{align*}
(\ldual(T) \diamond T)^A_{A'} &= \int^B T^B_{A'} \otimes \ldual(T)^A_B \\
&= \int^B T^B_{A'} \otimes \compdg{\cat B}(T_A, h_B) \\
& \cong \int^B \compdg{\cat B}(h_B, T_{A'}) \otimes \compdg{\cat B}(T_A, h_B).
\end{align*}
It is directly checked that the diagram
\begin{equation*}
\xymatrix{
\compdg{\cat B}(h_{B'}, T_{A'}) \otimes \compdg{\cat B}(T_A, h_B) \ar[r] \ar[d] & \compdg{\cat B}(h_B, T_{A'}) \otimes \compdg{\cat B}(T_A, h_B) \ar[d] \\
\compdg{\cat B}(h_{B'}, T_{A'}) \otimes \compdg{\cat B}(T_A, h_{B'}) \ar[r] & \compdg{\cat B}(T_A, T_{A'})
}
\end{equation*}
is commutative for all $B \to B'$ in $\cat B$, where the arrows arriving in $\compdg{\cat B}(T_A, T_{A'})$ are given by composition, and they are natural in $A, A'$. Hence, by the universal property of the coend, we find our desired map. There are also (closed, degree $0$) maps
\begin{align}
e \colon T \diamond \compdg{\cat B}(T_-, T_-) & \to T, \label{eq:map_T_diamond} \\
e' \colon \compdg{\cat B}(T_-, T_-) \diamond \ldual(T) &\to \ldual(T). \label{eq:map_diamond_L(T)}
\end{align}
The morphism \eqref{eq:map_T_diamond} is induced by the maps
\begin{align*}
\compdg{\cat B}(T_A, T_{A'}) \otimes T^B_A &\to T^B_{A'}, \\
\varphi \otimes x & \mapsto \varphi^B(x),
\end{align*}
natural in $A'$ and $B$. Moreover, \eqref{eq:map_diamond_L(T)} is induced by the composition maps
\begin{equation*}
\compdg{\cat B}(T_A, h_B) \otimes \compdg{\cat B}(T_{A'}, T_A) \to \compdg{\cat B}(T_{A'}, h_B),
\end{equation*}
natural in $A'$ and $B$. In a similar fashion as for \eqref{eq:map_T_diamond}, we get a candidate counit morphism:
\begin{equation} \label{eq:T_L(T)_counit}
\varepsilon \colon T \diamond \ldual(T) \to h_{\cat B}.
\end{equation}
indeed, this morphism is induced by the maps:
\begin{align*}
\compdg{\cat B}(T_A, h_{B'}) \otimes T_A^B & \to h^B_{B'}, \\
\varphi \otimes x & \mapsto \varphi^B(x),
\end{align*}
natural in $B$ and $B'$. Also, we have the morphism
\begin{equation*}
t \colon h_{\cat A} \to \compdg{\cat B}(T_-, T_-),
\end{equation*}
induced by the action of $T$ on morphisms of $\cat A$. The following result tells us that the adjunction $T \dashv \ldual(T)$ is not very far from being obtained. 
\begin{lemma} \label{lemma:quasiadj_T_L(T)}
The diagram
\begin{equation}
\begin{gathered}
\xymatrix{
T \ar[r]^-{\sim} &  T \diamond h_{\cat A} \ar[r]^-{T \diamond t} & T \diamond \compdg{\cat B}(T_-, T_-) \ar[r]^-e & T \\
& T \diamond (\ldual(T) \diamond T) \ar[ur]^{T \diamond n} \ar[r]^\sim & (T \diamond \ldual(T)) \diamond T \ar[r]^-{\varepsilon \diamond T} & h_{\cat B} \diamond T \ar[u]_\sim
}
\end{gathered}
\end{equation}
is commutative, and the top row composition is the identity $1_T$.

Analogously, the diagram
\begin{equation}
\begin{gathered}
\xymatrix{
\ldual(T) \ar[r]^-{\sim} &  h_{\cat A} \diamond \ldual(T) \ar[r]^-{t \diamond \ldual(T)} & \compdg{\cat B}(T_-, T_-) \diamond \ldual(T) \ar[r]^-{e'} & \ldual(T) \\
& (\ldual(T) \diamond T) \diamond \ldual(T) \ar[ur]^{n \diamond \ldual(T)} \ar[r]^\sim & \ldual(T) \diamond (T \diamond \ldual(T)) \ar[r]^-{\ldual(T) \diamond \varepsilon} & \ldual(T) \diamond h_{\cat B} \ar[u]_-\sim
}
\end{gathered}
\end{equation}
is commutative, and the top row composition is the identity $1_{\ldual(T)}$.
\end{lemma}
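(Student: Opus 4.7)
The cleanest approach is a direct element-level verification using coend representatives. Every map in the diagrams is a closed degree-zero $2$-morphism in $\kat{Bimod}$, so it suffices, by the universal property of coends (Remark \ref{remark:strong_univ_prop_ends}), to check the equalities on representatives of the coends that compute $T \diamond (\ldual(T) \diamond T)$, $T \diamond \compdg{\cat B}(T_-, T_-)$, etc.

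First I would verify that the top row composition $T \xrightarrow{\sim} T \diamond h_{\cat A} \xrightarrow{T \diamond t} T \diamond \compdg{\cat B}(T_-, T_-) \xrightarrow{e} T$ is $1_T$. The inverse of the co-Yoneda isomorphism sends $x \in T^B_A$ to the class $x \otimes 1_A$ in $\int^{A'} \cat A(A, A') \otimes T^B_{A'}$; the map $T \diamond t$ sends this to $x \otimes t(1_A) = x \otimes 1_{T_A}$, using that $t$ is the action of $T$ on morphisms so $t(1_A) = T(1_A) = 1_{T_A}$; and $e$ sends $x \otimes 1_{T_A}$ to $(1_{T_A})^B(x) = x$. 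An identical argument, with the roles of left/right swapped and with $e'$ being post-composition, handles the analogous claim for the top row of the second diagram.

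Next I would prove the commutativity of the main diagram. Using Fubini, I identify both $T \diamond (\ldual(T) \diamond T)$ and $(T \diamond \ldual(T)) \diamond T$ at bidegree $(B,A)$ with the same triple coend $\int^{A',B'} T^{B'}_A \otimes \compdg{\cat B}(T_{A'}, h_{B'}) \otimes T^B_{A'}$, and I represent a typical element as $y \otimes \varphi \otimes x$. Along the upper path, $n$ sends $y \otimes \varphi$ to the composition $\tilde y \circ \varphi \in \compdg{\cat B}(T_{A'}, T_A)$, where $\tilde y \colon h_{B'} \to T_A$ is the Yoneda adjoint of $y$ (so $\tilde y^B(g) = y \cdot g$ for $g \in \cat B(B, B')$); then $e$ evaluates $(\tilde y \circ \varphi)^B(x) = \tilde y^B(\varphi^B(x)) = y \cdot \varphi^B(x)$. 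Along the lower path, $\varepsilon \diamond T$ sends $y \otimes \varphi \otimes x$ to $y \otimes \varphi^B(x) \in T^{B'}_A \otimes \cat B(B, B')$, and the co-Yoneda isomorphism $h_{\cat B} \diamond T \cong T$ sends this to $y \cdot \varphi^B(x)$. The two outputs agree.

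The dual diagram, involving $\ldual(T)$, is proved by the same token, with $n \diamond \ldual(T)$ replaced by a composition producing $\psi \circ \tilde y$ for $\psi \in \compdg{\cat B}(T_A, h_B)$, and $\ldual(T) \diamond \varepsilon$ implementing the same evaluation on the other side. I expect the main obstacle to be purely bookkeeping: keeping the Fubini rearrangements, the Yoneda identifications $\compdg{\cat B}(h_{B'}, T_A) \cong T^{B'}_A$, and the indices on the four dg-categories (two copies of $\cat A$ and two of $\cat B$ interleaved in the triple coend) consistent. Once the coend representatives are written out, the verification is a mechanical unwinding of the definitions of $n$, $e$, $e'$, $\varepsilon$, and $t$, with no further input needed.
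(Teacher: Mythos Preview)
Your proposal is correct and is exactly the kind of direct element-level computation the paper has in mind; the paper's own proof simply reads ``They are all direct computations, which we leave to the reader.'' Your unwinding of the coend representatives for $n$, $e$, $e'$, $\varepsilon$, and $t$ fills in precisely those omitted details, and the only care needed is the bookkeeping you already flag (the order of tensor factors in $\diamond$ and the variance conventions in the co-Yoneda identifications).
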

\begin{proof}
They are all direct computations, which we leave to the reader.
\end{proof}
Taking h-projective resolutions of $T$ and of $\lder \ldual(T)$, and projecting every morphism in the derived category, we get the derived version of the above lemma:
\begin{lemma} \label{lemma:der_quasiadj_T_L(T)}
The diagram
\begin{equation}
\begin{gathered}
\xymatrix{
T   \ar[r]^-{\sim} &  T \ldiamond h_{\cat A} \ar[r]^-{T \ldiamond t} & T  \ldiamond \compdg{\cat B}(Q(T)_-, Q(T)_-) \ar[r]^-e & T  \\
& T \ldiamond (\lder \ldual(T) \ldiamond T) \ar[ur]^{T \ldiamond n} \ar[r]^\sim & (T \ldiamond \lder \ldual(T)) \ldiamond T \ar[r]^-{\varepsilon \ldiamond T} & h_{\cat B} \ldiamond T \ar[u]_\sim
}
\end{gathered}
\end{equation}
is commutative, and the top row composition is the identity $1_T$.

Analogously, the diagram
\begin{equation}
\begin{gathered}
\xymatrix{
\lder \ldual(T) \qis  h_{\cat A} \ldiamond \lder \ldual(T) \ar[r]^-{t \ldiamond \ldual(T)} & \compdg{\cat B}(Q(T)_-, Q(T)_-) \ldiamond \lder \ldual(T) \ar[r]^-{e'} & \lder \ldual(T) \\
(\lder \ldual(T) \ldiamond T) \ldiamond \lder \ldual(T) \ar[ur]^{n \ldiamond \lder \ldual(T)} \ar[r]^\sim & \lder \ldual(T) \ldiamond (T \ldiamond \lder \ldual(T)) \ar[r]^-{\lder \ldual(T) \ldiamond \varepsilon} & \lder \ldual(T) \ldiamond h_{\cat B} \ar[u]_-\sim
}
\end{gathered}
\end{equation}
is commutative, and the top row composition is the identity $1_{\lder \ldual(T)}$.
\end{lemma}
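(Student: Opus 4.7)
The plan is to deduce this lemma directly from its underived analogue, Lemma~\ref{lemma:quasiadj_T_L(T)}, applied to the h-projective resolution $Q(T)$ of $T$. By definition $\lder \ldual(T) = \ldual(Q(T))$; by construction of $\ldiamond$, the derived compositions $T \ldiamond (-)$ and $(-) \ldiamond T$ are computed as $Q(T) \diamond (-)$ and $(-) \diamond Q(T)$ respectively; and the bimodule $\compdg{\cat B}(Q(T)_-, Q(T)_-)$ appearing in the statement is already phrased in terms of $Q(T)$ rather than $T$. So the two commutative diagrams supplied by Lemma~\ref{lemma:quasiadj_T_L(T)} at $Q(T)$ already have, on the nose, the shape of the derived diagrams we need, once we identify $Q(T)$ with $T$ in $\dercomp{\cat A, \cat B}$.

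Concretely, the first step is to invoke Lemma~\ref{lemma:quasiadj_T_L(T)} with $T$ replaced by $Q(T)$, obtaining commutative diagrams in $\compdg{\cat A, \cat B}$ and $\compdg{\cat B, \cat A}$ whose top-row compositions equal $1_{Q(T)}$ and $1_{\ldual(Q(T))}$, and which are assembled from the morphisms $n$, $e$, $e'$, $t$, $\varepsilon$ attached to $Q(T)$. The second step is to project these diagrams through the localisation functors $\delta$ to the derived categories. Since $\delta$ preserves commutativity of diagrams and the quasi-isomorphism $q \colon Q(T) \to T$ becomes the canonical isomorphism identifying $Q(T)$ with $T$ in $\dercomp{\cat A, \cat B}$, we obtain commutative diagrams of the stated shape whose top-row compositions are $\delta(1_{Q(T)}) = 1_T$ and $\delta(1_{\ldual(Q(T))}) = 1_{\lder \ldual(T)}$.

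The only remaining check is bookkeeping: one must verify that, under the identifications $Q(T) \qis T$ and $\ldual(Q(T)) = \lder \ldual(T)$, the maps labelled $n$, $e$, $e'$, $t$, $\varepsilon$ in the derived diagrams are indeed the images under $\delta$ of the underived maps bearing the same names for $Q(T)$. This is handled by the naturality of these constructions in the bimodule variable, together with Corollary~\ref{coroll:derived_bimodules_duality_units_Isbell}, which tells us precisely that the unit and counit of $\lder \ldual \dashv \rder \rdual$ are computed componentwise from the underived unit and counit of $\ldual \dashv \rdual$ applied to h-projective resolutions. I expect this identification to be the main (mild) obstacle; no new universal property needs to be established, and none of the computations of Lemma~\ref{lemma:quasiadj_T_L(T)} need to be repeated.
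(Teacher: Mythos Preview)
Your proposal is correct and follows essentially the same route as the paper, which also deduces the lemma immediately from Lemma~\ref{lemma:quasiadj_T_L(T)} applied to $Q(T)$, composing with the resolution quasi-isomorphisms $T \qis Q(T)$ and $\lder \ldual(T) \qis Q(\lder \ldual(T))$ at the endpoints of the top rows. One small correction: the map $\varepsilon$ in these diagrams is the candidate counit \eqref{eq:T_L(T)_counit} for $T \dashv \ldual(T)$, not the counit of the duality adjunction $\ldual \dashv \rdual$, so your appeal to Corollary~\ref{coroll:derived_bimodules_duality_units_Isbell} is misplaced---naturality of the constructions in the bimodule variable is all that is needed for the bookkeeping.
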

\begin{proof}
It follows immediately from Lemma \ref{lemma:quasiadj_T_L(T)}. Remember to compose with the h-projective resolutions $T \qis Q(T)$ and $\lder \ldual(T) \qis Q(\lder \ldual(T))$, at the start and the end of the top rows of both diagrams.
\end{proof}
Now, we see that the obstruction for $\ldual(T)$ to be adjoint to $T$ lies in the morphism \eqref{eq:morph_quasiunit} (or its derived version). For instance, if it is a natural isomorphism, then we may define the unit morphism
\begin{equation*}
\eta = n^{-1}t \colon h_{\cat A} \to \ldual(T) \diamond T,
\end{equation*}
and Lemma \ref{lemma:quasiadj_T_L(T)} tells us immediately that $T \dashv \ldual(T)$. Analogously, if the derived morphism $n \colon \lder \ldual(T) \ldiamond T \to \compdg{\cat B}(Q(T)_-, Q(T)_-)$ is a quasi-isomorphism, then we have a unit morphism $\eta$ in the derived category, and Lemma \ref{lemma:der_quasiadj_T_L(T)} tells us that $T \dashv \lder \ldual(T)$. A sufficient condition for $n$ to be (in some sense) invertible is actually the right (quasi-)representability of $T$:
\begin{prop} \label{prop:morph_quasiunit_iso}
If $T \in \compdg{\cat A, \cat B}$ is right representable, then \eqref{eq:morph_quasiunit} is an isomorphism. If it is right homotopy representable, then it is a homotopy equivalence.

If $T \in \dercomp{\cat A, \cat B}$ is right quasi-representable, then \eqref{eq:morph_quasiunit} induces a quasi-isomorphism (that is, the derived map
\begin{equation*}
n \colon \lder \ldual(T) \ldiamond T \to \compdg{\cat B}(Q(T)_-, Q(T)_-)
\end{equation*}
is a quasi-isomorphism).
\end{prop}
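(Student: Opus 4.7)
The plan is to reduce all three assertions to a direct Yoneda/co-Yoneda computation in the strictly representable case, and then to propagate it to the other two cases by homotopy invariance of the dg-functorial constructions involved. The main obstacle lies in the first step: verifying, via a careful diagram chase, that under the Yoneda identifications the composition map $n$ literally descends to the canonical co-Yoneda isomorphism. Once that is done, the remaining two cases follow essentially formally.

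First, assume $T$ is right representable with $T_A \cong h_{F(A)}$ for some $F(A) \in \cat B$, for each $A$. Using Yoneda to rewrite $\compdg{\cat B}(h_{F(A)}, h_B) \cong h^{F(A)}_B$ and then the co-Yoneda lemma (Proposition \ref{prop:coYoneda}), one computes
\[
(\ldual(T) \diamond T)^A_{A'} = \int^B T^B_{A'} \otimes \compdg{\cat B}(T_A, h_B) \cong \int^B T^B_{A'} \otimes h^{F(A)}_B \cong T^{F(A)}_{A'},
\]
and another application of Yoneda gives $\compdg{\cat B}(T_A, T_{A'}) \cong T^{F(A)}_{A'}$. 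Unwinding the universal property of the coend, one checks that under both identifications the composition map $n$ corresponds to the canonical co-Yoneda comparison, which on decomposable tensors sends $x \otimes \varphi$ (with $x \in T^B_{A'}$ and $\varphi \in \cat B(F(A), B)$) to $x\varphi \in T^{F(A)}_{A'}$. Hence $n$ is a natural isomorphism of $\cat A$-$\cat A$-bimodules.

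For the homotopy-representable case, each functor appearing in the above computation -- the inner hom $\compdg{\cat B}(-, h_B)$, the tensor product, and the coend over $B$ -- is dg-functorial in $T_A$, and any dg-functor carries homotopy equivalences to homotopy equivalences (since $H^0$ is functorial). Thus a homotopy equivalence $T_A \approx h_{F(A)}$ propagates through the whole chain of isomorphisms, and by naturality of the composition in $T_A$ the map $n$ itself becomes a homotopy equivalence.

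For the quasi-representable case, apply the previous step to an h-projective resolution $Q(T)$ of $T$ (as a bimodule). By Lemma \ref{lemma:hproj_component}, $Q(T)_A = Q(T_A)$ is h-projective, and the chain $Q(T_A) \qis T_A \qis h_{F(A)}$ is a quasi-isomorphism between h-projective right $\cat B$-modules, hence a homotopy equivalence. Therefore $Q(T)$ is right homotopy representable, and the previous step gives that $n \colon \ldual(Q(T)) \diamond Q(T) \to \compdg{\cat B}(Q(T)_-, Q(T)_-)$ is a homotopy equivalence; this is precisely a representative of the derived map in the statement, since $\lder \ldual(T) \ldiamond T \qis \ldual(Q(T)) \diamond Q(T)$ via h-projectivity of $Q(T)$.
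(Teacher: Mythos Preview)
Your proposal is correct and follows essentially the same approach as the paper: reduce to the strictly representable case via Yoneda and co-Yoneda, then transport along dg-functoriality/naturality for the homotopy case, and pass to an h-projective resolution (using that quasi-isomorphisms between h-projectives are homotopy equivalences) for the derived case. The only cosmetic difference is that the paper packages the first two cases into a single commutative square comparing $n$ with the co-Yoneda isomorphism for $h_{F(-)}$, whereas you compute both sides as $T^{F(A)}_{A'}$ and invoke naturality verbally; the content is the same.
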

\begin{proof}
Assume that $T_A \cong h_{F(A)}$ or $T_A \approx h_{F(A)}$ for all $A \in \cat A$. Then, we have a commutative diagram:
\begin{equation*}
\xymatrix{
\displaystyle \int^B T^B_{A'} \otimes \compdg{\cat B}(T_A, h_B) \ar[r]^-n \ar[d]^\approx & \compdg{\cat B}(T_A, T_{A'}) \ar[d]^\approx \\
\displaystyle \int^B h^B_{F(A')} \otimes \compdg{\cat B}(h_{F(A)}, h_B) \ar[d]^\sim & \compdg{\cat B}(h_{F(A)}, h_{F(A')}) \ar[d]^\sim \\
\displaystyle \int^B h^B_{F(A')} \otimes h^{F(A)}_B \ar[r]^-\sim & h^{F(A)}_{F(A')}.
}
\end{equation*}
The lower vertical arrows, labeled with $\sim$, are given by the Yoneda lemma; the lower horizontal arrow is the co-Yoneda isomorphism. By dg-functoriality, the upper vertical arrows, labeled with $\approx$, are isomorphisms if $T$ is right representable, homotopy equivalences if $T$ is homotopy right representable. So, in the first case, $n$ is an isomorphism, and in the other case $n$ is a homotopy equivalence.

In the derived setting, just replace $T$ with its h-projective resolution $Q(T)$. Then, since $Q(T)_A$ and $h_{F(A)}$ are h-projective for all $A$, the quasi-isomorphism $Q(T)_A \qis h_{F(A)}$ is actually given by a homotopy equivalence; we apply the above argument and conclude that $n$ is a quasi-isomorphism, when viewed in the derived category.
\end{proof}
\begin{coroll} \label{coroll:quasirep_adjoints}
If $T \in \compdg{\cat A, \cat B}$ is right representable, then there is an adjunction $T \dashv \ldual(T)$ in $\kat{Bimod}$. If $T \in \dercomp{\cat A, \cat B}$ is right quasi-representable, then there is an adjunction $T \dashv \lder \ldual(T)$ in $\kat{DBimod}$.

Moreover, if $S \in \compdg{\cat B, \cat A}$ is left representable, then there is an adjunction $\rdual(S) \dashv S$ in $\kat{Bimod}$. If $S \in \dercomp{\cat B, \cat A}$ is left quasi-representable, then there is an adjunction $\rder \rdual(S) \dashv S$ in $\kat{DBimod}$.
\end{coroll}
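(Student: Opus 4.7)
The plan is to assemble the adjunction directly from the building blocks already constructed: the candidate counit $\varepsilon$ of \eqref{eq:T_L(T)_counit}, the map $t \colon h_{\cat A} \to \compdg{\cat B}(T_-,T_-)$, and the morphism $n$ of \eqref{eq:morph_quasiunit}, together with its derived version. The two lemmas \ref{lemma:quasiadj_T_L(T)} and \ref{lemma:der_quasiadj_T_L(T)} already package the triangular identities up to precomposition with $n$, so once $n$ is invertible in the appropriate sense we are done.

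First I would treat $T \dashv \ldual(T)$ in $\kat{Bimod}$ assuming $T$ is right representable. By Proposition \ref{prop:morph_quasiunit_iso}, the morphism $n \colon \ldual(T) \diamond T \to \compdg{\cat B}(T_-,T_-)$ is an isomorphism in $\compdg{\cat A, \cat A}$, so I can define the unit
\begin{equation*}
\eta = n^{-1} \circ t \colon h_{\cat A} \to \ldual(T) \diamond T.
\end{equation*}
With $\varepsilon$ as in \eqref{eq:T_L(T)_counit}, the two diagrams of Lemma \ref{lemma:quasiadj_T_L(T)} rewrite the triangular composites $(\varepsilon \diamond T) \circ (T \diamond \eta)$ and $(\ldual(T) \diamond \varepsilon) \circ (\eta \diamond \ldual(T))$ as, respectively, the top rows $e \circ (T \diamond t)$ and $e' \circ (t \diamond \ldual(T))$, which the lemma asserts equal $1_T$ and $1_{\ldual(T)}$. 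This yields the adjunction in $\kat{Bimod}$.

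For the derived case, I would repeat the same argument one level up. By the derived part of Proposition \ref{prop:morph_quasiunit_iso}, if $T \in \dercomp{\cat A, \cat B}$ is right quasi-representable then the derived map $n \colon \lder \ldual(T) \ldiamond T \to \compdg{\cat B}(Q(T)_-, Q(T)_-)$ becomes an isomorphism in $\dercomp{\cat A, \cat A}$; set $\eta = n^{-1} \circ t$ there and invoke Lemma \ref{lemma:der_quasiadj_T_L(T)} in place of Lemma \ref{lemma:quasiadj_T_L(T)}. This gives $T \dashv \lder \ldual(T)$ in $\kat{DBimod}$.

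Finally, for the statements about a left (quasi-)representable $S \in \compdg{\cat B, \cat A}$, the cleanest route is by duality. A $\cat B$-$\cat A$-bimodule $S$ is a $\opp{\cat A}$-$\opp{\cat B}$-bimodule $S^{\mathrm{op}}$, and left representability of $S$ translates into right representability of $S^{\mathrm{op}}$; under this correspondence $\rdual(S)$ matches $\ldual(S^{\mathrm{op}})$. Applying the part already proved to $S^{\mathrm{op}}$ and reading the resulting adjunction back through the opposites reverses the direction of the $1$-morphisms and produces $\rdual(S) \dashv S$ in $\kat{Bimod}$, and similarly $\rder \rdual(S) \dashv S$ in $\kat{DBimod}$. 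The only mildly delicate point, which is where I would focus care, is to keep track of the sign and variance conventions so that the unit and counit for the $S$-case really are the images of those for $S^{\mathrm{op}}$ under the opposite functor; alternatively, one can simply mirror the whole argument by constructing the dual analogues of $n$, $e$, $e'$, $t$, and $\varepsilon$ for $\rdual$ and verifying the corresponding versions of Lemmas \ref{lemma:quasiadj_T_L(T)} and \ref{lemma:der_quasiadj_T_L(T)} directly.
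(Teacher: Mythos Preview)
Your treatment of the first assertion (both the $\kat{Bimod}$ and $\kat{DBimod}$ cases) is exactly the paper's argument: invert $n$ via Proposition~\ref{prop:morph_quasiunit_iso}, set $\eta = n^{-1} t$, and read off the triangular identities from Lemmas~\ref{lemma:quasiadj_T_L(T)} and~\ref{lemma:der_quasiadj_T_L(T)}.

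For the second assertion your route differs from the paper's. You pass to opposite categories, turning a left (quasi-)representable $S$ into a right (quasi-)representable $S^{\mathrm{op}}$, and then transport the adjunction back; you correctly flag that this requires bookkeeping of variance and sign conventions (or else rebuilding the dual analogues of $n,e,e',t,\varepsilon$ from scratch). The paper instead stays inside the same bicategory and uses the duality $\ldual \dashv \rdual$ directly: if $S$ is left (quasi-)representable then $\rdual(S)$ (resp.\ $\rder\rdual(S)$) is right (quasi-)representable, so the first part gives $\rdual(S) \dashv \ldual\rdual(S)$ (resp.\ $\rder\rdual(S) \dashv \lder\ldual(\rder\rdual(S))$); but the counit $\ldual\rdual(S) \to S$ is an isomorphism on left representables (and likewise in the derived setting, by the results culminating in Proposition~\ref{prop:lqrep_rqrep}), so the right adjoint is $S$ itself. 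This avoids the opposite-category bookkeeping entirely and reuses machinery already in place. Your approach is not wrong, but it is the harder of the two options you sketch, and the paper's shortcut is worth knowing.
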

\begin{proof}
The first part of the assertion follows directly from Proposition \ref{prop:morph_quasiunit_iso} and the above discussion. The second part is a consequence of Lemma \ref{lemma:repr_Isbell_equiv} and Proposition \ref{prop:derived_repr_Isbell_equiv}. Indeed, if $S$ is left representable, then $\rdual(S)$ is right representable, so we have $\rdual(S) \dashv \ldual \rdual(S)$, but $\ldual \rdual(S) \cong S$, and we are done. A similar argument in the derived setting shows that $\rder \rdual(S) \dashv S$.
\end{proof}

\section{Quasi-functors} \label{section:quasi-functors}

Let $\cat A$ and $\cat B$ be dg-categories. By definition, a \emph{quasi-functor} $T \colon \cat A \to \cat B$ is an element of $\rqrep{\cat A, \cat B}$, namely, a right quasi-representable $\cat A$-$\cat B$-bimodule (assuming $\cat A$ is h-projective). The category of quasi-functors is usually denoted by $\mathrm{rep}(\cat A, \cat B)$ in literature (see, for instance, \cite{keller-dgcat}); in order to avoid confusion, we will stick to our (non standard) notation. The composition $\ldiamond$ descends to quasi-functors, namely, if $T \in \rqrep{\cat A, \cat B}$ and $S \in \rqrep{\cat B, \cat C}$, then $S \ldiamond T \in \rqrep{\cat A, \cat C}$. Indeed, assume that $T_A \qis h_{F(A)}$ and $S_B \qis h_{G(B)}$ for all $A \in \cat A$ and $B \in \cat B$. Then:
\begin{equation} \label{eq:quasifun_composition}
(S \ldiamond T)_A = \int^B Q(T)^B_A \otimes Q(S)_B \approx \int^B h^B_{F(A)} \otimes h_{G(B)} \cong h_{G(F(A))},
\end{equation}
where the last isomorphism follows by co-Yoneda lemma. It is also worth remarking that any dg-functor $F \colon \cat A \to \cat B$ can be identified with a quasi-functor, namely, the bimodule $h_F$. 
\subsection{Adjoints}
The results of Section \ref{section:bicat_bimod_adj} allow us to give a simple working characterisation of adjunctions of quasi-functors. Given quasi-functors $T \colon \cat A \to \cat B$ and $S \colon \cat B \to \cat A$, we say that \emph{$T$ is left adjoint to $S$ (and $S$ is right adjoint to $T$)} simply if $T \dashv S$ as bimodules, that is, in the bicategory $\kat{DBimod}$. Now, since $T$ is a quasi-functor, then we have the adjunction $T \dashv \lder \ldual(T)$, and so $S \qis \lder \ldual(T)$ (adjoints are always unique up to isomorphism). In particular, $S$ is left quasi-representable, and we have the adjunction $\rder \rdual(S) \dashv S$, so we also deduce that $T \qis \rder \rdual(S)$. In conclusion, we get the following result:
\begin{prop} \label{prop:quasifun_adjoint_repr}
A quasi-functor $T \colon \cat A \to \cat B$ has a left adjoint if and only if it is left quasi-representable. Moreover, it has a right adjoint if and only if $\lder \ldual(T)$ is right quasi-representable.
\end{prop}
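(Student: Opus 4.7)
The plan is to combine three ingredients already at hand: (a) the sufficient conditions for bimodule adjointness from Corollary \ref{coroll:quasirep_adjoints}, giving $T \dashv \lder \ldual(T)$ whenever $T$ is right quasi-representable and $\rder \rdual(S) \dashv S$ whenever $S$ is left quasi-representable; (b) the derived duality equivalence $\rqrep{\cat A, \cat B} \simeq \opp{\lqrep{\cat B, \cat A}}$ of Proposition \ref{prop:lqrep_rqrep}, which transfers information between the two flavours of quasi-representability; and (c) the (standard bicategorical) uniqueness of adjoints in $\kat{DBimod}$ up to canonical isomorphism.

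For the first statement, in the ``only if'' direction I would start from a left adjoint quasi-functor $S \colon \cat B \to \cat A$ with $S \dashv T$. Being a quasi-functor, $S$ is right quasi-representable, so Corollary \ref{coroll:quasirep_adjoints} provides a second adjunction $S \dashv \lder \ldual(S)$. Uniqueness of right adjoints of $S$ then yields $T \qis \lder \ldual(S)$ in $\dercomp{\cat A, \cat B}$, and Proposition \ref{prop:lqrep_rqrep} forces $\lder \ldual(S)$, hence $T$, to be left quasi-representable. The ``if'' direction is direct: if $T$ is left quasi-representable, Corollary \ref{coroll:quasirep_adjoints} produces $\rder \rdual(T) \dashv T$, and Proposition \ref{prop:lqrep_rqrep} ensures that $\rder \rdual(T)$ is right quasi-representable, hence a quasi-functor $\cat B \to \cat A$, supplying the desired left adjoint.

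For the second statement the argument is parallel and slightly shorter, since $T$ is a quasi-functor from the outset. If a right adjoint quasi-functor $S$ of $T$ exists, then $T \dashv S$, and Corollary \ref{coroll:quasirep_adjoints} also yields $T \dashv \lder \ldual(T)$; uniqueness of right adjoints gives $S \qis \lder \ldual(T)$, forcing $\lder \ldual(T)$ to be right quasi-representable. Conversely, if $\lder \ldual(T)$ is right quasi-representable, it is itself a quasi-functor, and the adjunction from Corollary \ref{coroll:quasirep_adjoints} exhibits it directly as the sought right adjoint.

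The only step that is not completely mechanical is the uniqueness of adjoints in $\kat{DBimod}$, used silently throughout. This follows from the usual bicategorical trick: given two right adjoints $S, S'$ of $T$, with units $\eta, \eta'$ and counits $\varepsilon, \varepsilon'$, the composite
\begin{equation*}
S \xrightarrow{\sim} h_{\cat A} \ldiamond S \xrightarrow{\eta' \ldiamond S} (S' \ldiamond T) \ldiamond S \xrightarrow{\sim} S' \ldiamond (T \ldiamond S) \xrightarrow{S' \ldiamond \varepsilon} S' \ldiamond h_{\cat B} \xrightarrow{\sim} S'
\end{equation*}
is an isomorphism by the triangular identities (and a symmetric construction gives its inverse). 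With this in hand, every remaining step is a direct invocation of the results above.
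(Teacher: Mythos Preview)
Your proof is correct and follows essentially the same route as the paper: the paper's argument, given in the paragraph immediately preceding the proposition, uses Corollary \ref{coroll:quasirep_adjoints} to obtain $T \dashv \lder \ldual(T)$ and $\rder \rdual(S) \dashv S$, then invokes uniqueness of adjoints together with Proposition \ref{prop:lqrep_rqrep} exactly as you do. Your version is simply more explicit, in particular spelling out the bicategorical uniqueness-of-adjoints argument that the paper takes as known.
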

There are sufficient hypotheses on the dg-categories that guarantee the existence of adjoints. They are, in some sense, particular finiteness conditions:
\begin{defin}
Let $\cat A$ be a dg-category. We say that $\cat A$ is \emph{locally perfect} if $\cat A(A,A')$ is a perfect $\basering k$-module: $\cat A(A,A') \in \per{\basering k}$ for all $A, A' \in \cat A$. We say that $\cat A$ is \emph{smooth} if the diagonal bimodule is perfect: $h_{\cat A} \in \per{\cat A \lotimes \opp{\cat A}}$.
\end{defin}
We need a result adapted from \cite[Lemma 2.8]{toen-vaquie-moduli}:
\begin{lemma} \label{lemma:smooth_locallyperf}
Let $\cat A, \cat B$ be dg-categories, and let $T \in \dercomp{\cat A, \cat B}$ be a bimodule. If $\cat A$ is locally perfect and $T$ is a perfect bimodule, then $T_A$ is a perfect right $\cat B$-module for all $A \in \cat A$. Analogously, if $\cat B$ is locally perfect and $T$ is perfect, then $T^B$ is a perfect left $\cat A$-module for all $B \in \cat B$.

Conversely, if $\cat A$ is smooth and $T_A$ is perfect for all $A \in \cat A$, then $T$ is a perfect bimodule. Analogously, if $\cat B$ is smooth and $T^B$ is perfect for all $B \in \cat B$, then $T$ is perfect.
\end{lemma}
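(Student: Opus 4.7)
The notion of ``perfect'' is characterised by thick closure of representables, so the natural approach is to exhibit, in each direction, an exact coproduct-preserving functor sending representables to perfects, and then invoke thick closure. For the first two statements I would evaluate, and for the converse I would compose with $T$ on one side.

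\textbf{Forward direction.} Fix $A\in\cat A$ and consider the evaluation functor
\begin{equation*}
(-)_A\colon\dercomp{\cat A,\cat B}\to\dercomp{\cat B},\qquad T\mapsto T_A.
\end{equation*}
This is triangulated and commutes with arbitrary coproducts (it is induced by a dg-functor that preserves acyclics, since all relevant structure is computed objectwise), hence it preserves thick closures. It therefore suffices to check the claim on the generators of $\per{\cat A,\cat B}$, namely the representable bimodules $h_{(B_0,A_0)}$ with
\begin{equation*}
(h_{(B_0,A_0)})_A{}^{B}=\cat B(B,B_0)\otimes\cat A(A_0,A)=h_{B_0}\otimes\cat A(A_0,A).
\end{equation*}
Local perfection of $\cat A$ makes $\cat A(A_0,A)$ a perfect complex of $\basering k$-modules; tensoring the representable $h_{B_0}$ by a perfect complex keeps us in the thick closure of $h_{B_0}$, so the result is perfect in $\dercomp{\cat B}$. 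The analogous statement for $T^B$ with $\cat B$ locally perfect is obtained by swapping the roles (evaluating in the second variable and using left $\cat A$-modules).

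\textbf{Converse.} Now assume $\cat A$ smooth and $T_A$ perfect for every $A$. By co-Yoneda one has a natural isomorphism $T\qis T\ldiamond h_{\cat A}$ in $\dercomp{\cat A,\cat B}$, and the derived composition
\begin{equation*}
T\ldiamond(-)\colon\dercomp{\cat A,\cat A}\to\dercomp{\cat A,\cat B}
\end{equation*}
is triangulated and preserves coproducts (by construction of $\ldiamond$, using Lemma \ref{lemma:diamondprod_hproj} and the fact that derived tensor and coends commute with sums). Since $h_{\cat A}\in\per{\cat A,\cat A}$ by smoothness, it is enough to check that $T\ldiamond h_{(A_0,A_0')}$ is perfect for every representable $\cat A$-$\cat A$-bimodule $h_{(A_0,A_0')}$. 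A direct co-Yoneda computation gives
\begin{equation*}
T\ldiamond h_{(A_0,A_0')}\qis h^{A_0'}\otimes T_{A_0},
\end{equation*}
a bimodule obtained by external tensor product of a left $\cat A$-module and a right $\cat B$-module. The external tensor functor $h^{A_0'}\otimes(-)\colon\dercomp{\cat B}\to\dercomp{\cat A,\cat B}$ is exact, preserves coproducts, and sends the representable $h_{B_0}$ to $h^{A_0'}\otimes h_{B_0}=h_{(B_0,A_0')}$, hence carries $\per{\cat B}$ into $\per{\cat A,\cat B}$. Since $T_{A_0}$ is perfect by hypothesis, $h^{A_0'}\otimes T_{A_0}$ is a perfect bimodule, and the proof is complete. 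The symmetric converse for $T^B$ with $\cat B$ smooth follows by considering $(-)\ldiamond T$ and $h_{\cat B}\ldiamond T\qis T$.

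\textbf{Expected obstacle.} The routine verifications (exactness, preservation of coproducts) are straightforward; the point requiring care is that $T\ldiamond(-)$ and $(-)_A$ are honestly defined and exact on derived categories, which is where h-projective resolutions of bimodules intervene. Lemma \ref{lemma:hproj_component} is exactly what guarantees that evaluating an h-projective resolution of $T$ at $A$ yields an h-projective resolution of $T_A$, so that the derived composition $T\ldiamond h_{(A_0,A_0')}$ really computes $h^{A_0'}\otimes T_{A_0}$ up to quasi-isomorphism. Once these coherence issues are settled the thick-closure argument runs without further difficulty.
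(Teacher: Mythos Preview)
The paper does not actually prove this lemma: it is stated with the preamble ``We need a result adapted from \cite[Lemma 2.8]{toen-vaquie-moduli}'' and left without a proof environment. Your argument is therefore not being compared to anything in the paper, but it is correct and is in fact the standard proof of the Toën--Vaquié lemma: in one direction, evaluation sends representable bimodules to (perfect complex)~$\otimes$~(representable module) and hence preserves perfection under local perfection; in the other, smoothness places the diagonal in the thick closure of representables, and $T\ldiamond(-)$ sends each representable $\cat A$-$\cat A$-bimodule to an external tensor $h^{A_0'}\otimes T_{A_0}$, which is perfect by the hypothesis on the $T_A$.

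One small comment: the phrase ``commutes with arbitrary coproducts, hence it preserves thick closures'' is a non sequitur. What you need for an exact functor to carry $\per{-}$ into $\per{-}$ is that it preserve \emph{direct summands}, and any additive functor does that automatically (idempotents go to idempotents). Infinite coproducts play no role in either direction of the argument; you can safely delete those clauses. Apart from this cosmetic point, the verifications you flag in the ``expected obstacle'' paragraph (well-definedness of $(-)_A$ and $T\ldiamond(-)$ on derived categories via Lemma~\ref{lemma:hproj_component}) are exactly the right ones, and the co-Yoneda computation $T\ldiamond h_{(A_0,A_0')}\qis h^{A_0'}\otimes T_{A_0}$ is correct.
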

Now, we are able to prove the existence result of adjoints of quasi-functors:
\begin{thm} \label{thm:quasifun_adjoint_existence}
Let $\cat A, \cat B$ be dg-categories. Assume that $\cat A$ is triangulated and smooth, and that $\cat B$ is locally perfect. Let $T \colon \cat A \to \cat B$ be a quasi-functor. Then, $T$ admits both a left and a right adjoint.
\end{thm}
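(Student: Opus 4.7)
The plan is to apply Proposition \ref{prop:quasifun_adjoint_repr} twice: to obtain a left adjoint I must show that $T$ is left quasi-representable, and to obtain a right adjoint I must show that $\lder \ldual(T)$ is right quasi-representable. Throughout I will exploit that ``$\cat A$ triangulated'' gives the identification $\qrep{\cat A} = \per{\cat A}$ (and symmetrically $\qrep{\opp{\cat A}} = \per{\opp{\cat A}}$, since being triangulated as a dg-category is stable under $(-)^{\mathrm{op}}$), reducing quasi-representability questions to perfectness questions on which Lemma \ref{lemma:smooth_locallyperf} bites.

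First I would show that $T$ is a perfect $\cat A$-$\cat B$-bimodule. Being a quasi-functor, every component $T_A$ lies in $\qrep{\cat B} \subseteq \per{\cat B}$, and then the converse direction of Lemma \ref{lemma:smooth_locallyperf}, fed with the smoothness of $\cat A$, upgrades this componentwise perfection to perfection of $T$. Next, I apply the direct direction of the same lemma using the local perfectness of $\cat B$: each left $\cat A$-module $T^B$ is then perfect, hence in $\per{\opp{\cat A}}$, hence quasi-representable, so $T^B \qis h^{G(B)}$ for some $G(B) \in \cat A$. This shows $T$ is left quasi-representable and, by Proposition \ref{prop:quasifun_adjoint_repr}, produces the left adjoint.

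For the right adjoint I would repeat essentially the same pattern for the dual bimodule $\lder \ldual(T) \in \dercomp{\cat B, \cat A}$. By Proposition \ref{prop:lqrep_rqrep}, $\lder \ldual(T)$ is already left quasi-representable, so each $\lder \ldual(T)^A$ is a representable (hence perfect) left $\cat B$-module. Using the smoothness of $\cat A$ (now playing the role of the contravariant variable of the new bimodule), the converse part of Lemma \ref{lemma:smooth_locallyperf} lifts this to perfection of $\lder \ldual(T)$ as a $\cat B$-$\cat A$-bimodule; then its direct part, combined with $\cat B$ locally perfect, shows that each $\lder \ldual(T)_B$ is a perfect right $\cat A$-module. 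Since $\cat A$ is triangulated, this makes $\lder \ldual(T)_B$ quasi-representable, so $\lder \ldual(T) \in \rqrep{\cat B, \cat A}$, and Proposition \ref{prop:quasifun_adjoint_repr} provides the right adjoint.

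The hardest part will be purely bookkeeping: Lemma \ref{lemma:smooth_locallyperf} has to be invoked in four different incarnations (two for each adjoint), with the smoothness hypothesis and the local perfectness hypothesis attaching to the correct variable of the bimodule each time, and the left/right conventions tracked carefully so that the slicings $(-)^A$, $(-)_B$ and the duality $\lder \ldual$ do not get confused. A subsidiary point that deserves a line of justification, but is not deep, is the self-duality of the ``triangulated'' property under taking opposites, which I use to pass freely between $\qrep{\cat A}=\per{\cat A}$ and $\qrep{\opp{\cat A}}=\per{\opp{\cat A}}$.
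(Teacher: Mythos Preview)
Your proposal is correct and follows the paper's own proof essentially verbatim: both run Lemma~\ref{lemma:smooth_locallyperf} twice in each direction, first on $T$ to produce the left adjoint and then on $\lder \ldual(T)$ to produce the right adjoint, invoking Proposition~\ref{prop:quasifun_adjoint_repr} at the end of each half. The only difference is that you make explicit the passage to $\opp{\cat A}$ when concluding that perfect left $\cat A$-modules are quasi-representable, a point the paper leaves implicit; also, where you write ``representable'' for $\lder \ldual(T)^A$ you mean ``quasi-representable'', but this slip does not affect the argument.
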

\begin{proof}
By hypothesis, $T_A$ is quasi-representable, in particular perfect, for all $A \in \cat A$. So, by Lemma \ref{lemma:smooth_locallyperf}, $T$ is a perfect bimodule. Since $\cat B$ is locally perfect, then $T^B$ is a perfect left $\cat A$-module for all $B \in \cat B$. But $\cat A$ is triangulated, so we conclude that $T^B$ is quasi-representable for all $B \in \cat B$, hence we conclude that $T$ has a left adjoint, by Proposition \ref{prop:quasifun_adjoint_repr}.

To prove the existence of the right adjoint, we apply a similar argument to $\lder \ldual(T)$. Since $T$ is right quasi-representable, then $\lder \ldual(T)$ is left quasi-representable, that is, $\lder \ldual(T)^A$ is quasi-representable, in particular perfect, for all $A \in \cat A$. Since $\cat A$ is smooth, we have that $\lder \ldual(T)$ is a perfect bimodule; since $\cat B$ is locally perfect, we deduce that $\lder \ldual(T)_B$ is a perfect right $\cat A$-module for all $B \in \cat B$. Since $\cat A$ is triangulated,  $\lder \ldual(T)_B$ is quasi-representable for all $B \in \cat B$, so in the end $\lder \ldual(T)$ is both left and right quasi-representable, and by Proposition \ref{prop:quasifun_adjoint_repr} we conclude that $T$ has a right adjoint.
\end{proof}
\begin{remark}
The above result is mentioned in \cite{toen-vaquie-saturated}, under stronger assumptions on the dg-categories, namely, saturatedness: see \cite[Definition 2.4]{toen-vaquie-moduli}. If a dg-category $\cat A$ is saturated, then in particular it is triangulated and $H^0(\cat A)$ is saturated as a triangulated category (\cite[Appendix A]{toen-algebrisation}), that is, any covariant or contravariant cohomological functor $H^0(\cat A) \to \Mod{\basering k}$ of finite type is representable. It is an easy exercise to show that exact functors between saturated (and Ext-finite) triangulated categories admit adjoints: Theorem \ref{thm:quasifun_adjoint_existence} can hence be viewed as an enhancement of this result in the dg framework.
\end{remark}

\bibliographystyle{alpha}
\bibliography{biblio}
\end{document}